\algnewcommand{\Initialize}[1]{%
	\State \textbf{Initialization:}
	\Statex {\raggedright #1}
}
\definecolor{aqua}{rgb}{0.0, 0.5, 1.0}
\newcolumntype{L}[1]{>{\raggedright\arraybackslash}p{#1}}
\newcommand*{\yx}{\color{black} {\it [YX]} }
\newcommand{\prox}{\mathrm{prox}}
\newcommand{\dom}{\mathrm{dom}}
\newcommand{\argmin}[1]{\operatorname*{argmin}_{#1}}
\def\R{\mathbb{R}}
\def\N{\mathbb{N}}
\def\E{\mathbb{E}}
\def\bP{\mathbb{P}}
\def\tf{\tilde{f}}
\def\tg{\tilde{g}}
\def\sF{\mathcal{F}}
\def\Real{\mathbb{R}}
\newcommand{\bxi}{\boldsymbol{\xi}}
\def\us#1{{\color{red} #1}}
\def\yx#1{{\color{blue} #1}}
\def\qw#1{{\color{orange} #1}}
\NewDocumentEnvironment{alignb}{b}{%
  \begin{align*}
  \refstepcounter{equation} #1 \tag{\theequation}
  \end{align*}
}{\ignorespacesafterend}
\pgfplotsset{compat=1.18}
\theoremstyle{plain}%
\newtheorem{theorem}{Theorem}
\newtheorem{proposition}{Proposition}
\newtheorem{lemma}{Lemma}
\newtheorem{remark}{Remark}
\newtheorem{corollary}{Corollary}
\newtheorem{assumption}{Assumption}
\title{A Parameter-Free Stochastic LineseArch Method (SLAM) for Minimizing Expectation Residuals
}
\author{Qi Wang\thanks{Department of Industrial and Operations Engineering, University of Michigan, United States. {\tt\small  qiwangqi@umich.edu}} \and Uday V. Shanbhag\thanks{Department of Industrial and Operations Engineering, University of Michigan, United States. {\tt\small  udaybag@umich.edu}
} \and Yue Xie\thanks{Musketeers Foundation Institute of Data Science and Department of Mathematics, University of Hong Kong, China. {{\tt\small yxie21@hku.hk} (Corresponding author)}
} 
}
\date{December 16, 2025} 
\begin{document}
\sloppy
\maketitle
\thispagestyle{empty}
\pagestyle{plain}

\maketitle
 \begin{abstract}
 Most existing rate and complexity guarantees for stochastic gradient methods in $L$-smooth settings mandates that such sequences be non-adaptive, non-increasing, and upper bounded by $\tfrac{a}{L}$ for $a > 0$. This requires knowledge of $L$ and may preclude larger steps. Motivated by these shortcomings, we present an Armijo-enabled stochastic linesearch framework with standard stochastic zeroth- and first-order oracles. The resulting steplength sequence is non-monotone and requires neither knowledge of $L$ nor any other problem parameters. We then prove that the expected stationarity residual diminishes at a rate of $\mathcal{O}(1/\sqrt{K})$, where $K$ denotes the iteration budget. Furthermore, the resulting iteration and sample complexities for computing an $\epsilon$-stationary point are $\mathcal{O}(\epsilon^{-2})$ and $\mathcal{O}\left(\epsilon^{-4}\right)$. The proposed method allows for a simple nonsmooth convex component in the objective, addressed through proximal gradient updates. Analogous guarantees are provided in the Polyak-{\L}ojasiewicz (PL) setting and convex regimes.  Preliminary numerical experiments are seen to be promising.
\end{abstract}

{\bf Keywords} Stochastic {gradient methods}, {Stochastic linesearch}, {Parameter-free methods},  Stochastic optimization, {Nonlinear optimization}

\fontsize{11}{11}\selectfont

\section{Introduction}\label{sec:intro}
In this paper, we present a proximal stochastic gradient method with linesearch for minimizing the nonsmooth function $\phi$ over $\R^n$, as specified by 
\begin{align}
\min_{x \in \R^n} \ \phi(x) \, \triangleq \, f(x) + r(x), \quad \text{where } f(x) \, \triangleq \, \E \left[\, F(x, \bxi)\, \right], \label{eq:composite_opt}
\end{align}
where $\bxi: \Omega \to \Xi \subseteq  \mathbb{R}^d$ denotes a random
variable with realizations denoted by $\xi$,  $\Xi \,
\triangleq \, \left\{ \, \xi(\omega) \, \mid \, \omega \, \in
\, \Omega\,\right\}$,  $F: \Real^{n}  \times \Xi \to \Real$
is a real-valued function, $(\Omega,\mathcal{F},\mathbb{P})$ denotes the associated
probability space, and $r:\R^n \to \R\cup \left\{\, +\infty\, \right\}$ is a proper, 
closed, and convex function. Our linesearch method is overlaid on
the standard stochastic gradient descent (SGD)
method~\cite{Robbins1985} to aid in selecting a suitable
stepsize using some additional (random)
function evaluations. Employing standard unbiased stochastic zeroth- and first-order oracles, our proposed scheme generates a sequence $\{x_k\}$ that  achieves a
non-asymptotic rate guarantee for an iteration budget $K$ (see Theorem~\ref{thm.nonaymptotic constant nk pj}), given by
\begin{align*}
    \E\left[\, \left\|\, G_s(x_{R_K})\, \right\|^2\, \right] \, \le \, c/K,
\end{align*}
where $c$ is a prescribed positive scalar, $\|G_s(\bullet)\|$ is a stationary metric, and
$x_{R_K}$ is the solution at a randomly selected index across
$K$ iterations.

\medskip
\noindent \textbf{Deterministic nonconvex optimization.} In deterministic
unaccelerated first-order settings, when $f$ is $L$-smooth, convergence and rate
guarantees for gradient descent with constant stepsize $t$ require that  $t
\in (0, \tfrac{2}{L})$. However, since $L$ is problem-dependent and typically
unknown, such choices are far too conservative, leading to {slow convergence} in
practice.  Consequently, after a descent direction is determined, a backtracking
Armijo linesearch framework has been widely employed for selecting a stepsize
that ensures sufficient descent and thereby  guaranteeing global convergence to
a stationary point~\cite{Nocedal2006,Beck2014}; such an avenue requires
$\mathcal{O}(\epsilon^{-2})$ iterations (and gradient evaluations) to find an
iterate $x$ for which $\|\nabla f(x)\| \le \epsilon$~\cite[Theorem
2.2.2]{Cartis2022} in smooth nonconvex settings. Although such complexity bounds
are similar to those obtained in constant stepsize settings where stepsizes are
bounded by $\tfrac{2}{L}$, linesearch avenues are advantageous in that (i)
knowledge of $L$ is not needed;  and  (ii) larger steps are acceptable, leading
to improved behavior and broader applicability. With the introduction of
nonsmooth convex components (given by the function $r$) to allow for the presence of convex constraints or
nonsmooth convex regularizers, proximal gradient methods with an
Armijo-based linesearch have been studied in
\cite{Bertsekas1976,Beck2009,Salzo2017,Beck2017,Yang2024a}. The Armijo condition
may be modified to accommodate the nonsmooth component and this condition
may differ when $f$ is convex rather than when it is nonconvex
(see~\cite{Beck2017}). Our proposed method employs such conditions in the
stochastic regime with some subtle modifications.

\medskip

\noindent \textbf{Stochastic optimization.} In stochastic settings, the
situation is significantly more complicated. For instance, when considering SGD with unbiased oracles, constant stepsize schemes with steplengths of $\mathcal{O}\left(\frac{1}{L}\right)$ allow for convergence to a neighborhood of the stationary point~\cite[Theorem 4.8]{Bottou2018}, i.e. $\E[\, \tfrac{1}{K} \sum_{k=0}^{K-1} \left\|\nabla
f(x_k)\right\|^2\,] \le \sigma^2 + \mathcal{O}(1/K)$, where $\sigma^2$
denotes a uniform bound on the variance of the gradient estimator $\nabla F(x,\bxi)$. Asymptotic
convergence~\cite{Bottou2018,Bertsekas2000}  is guaranteed by leveraging a
diminishing stepsize sequence $\{t_k\}$, where $\sum_{k=0}^{\infty} t_k = \infty$
and  $\sum_{k=0}^{\infty} t_k^2 < \infty$. If $D$ denotes a bound associated with initial sub-optimality, Ghadimi and
Lan~\cite{Ghadimi2013} employ constant stepsizes bounded by $\min\{\tfrac{1}{L},
\tfrac{D}{\sigma \sqrt{K}}\}$ to obtain an iteration and sample-complexity
of $\mathcal{O}(\epsilon^{-4})$ to ensure an $\epsilon$-solution. Again, these schemes require knowledge of $L$, employ non-increasing stepsize sequences, and are generally viewed as too conservative in practice even if $L$ is available.

\medskip

\noindent \textbf{Adaptive steplength schemes.} Adaptive schemes can be loosely grouped as follows. (a) \emph{Probabilistic oracles.}
In~\cite{Cartis2018}, under deterministic objectives with noisy gradient
estimates that are sufficiently accurate in a probabilistic sense, the linesearch proceeds as follows. When the
Armijo condition fails, the stepsize is reduced and newly-generated gradient
estimates are employed to check  the Armijo condition; else the candidate  point
becomes the new iterate and the stepsize is increased. The method provably
takes at most $\mathcal{O}(\epsilon^{-2})$ iterations  to produce an
$\epsilon$-solution. Extensions to stochastic objectives~\cite{Paquette2020}
as well as weakened Armijo conditions~\cite{Berahas2021,Jin2021,Jin2024,Jin2025} have
been considered. (b) \emph{Unbiased stochastic oracles.} Under standard zeroth- and first-order
oracles{--namely, unbiased objective and gradient estimators with bounded variance--}a
backtracking Armijo-based linesearch scheme~\cite{Vaswani2019} is proposed based on a  strong growth assumption of the form
$\E[\|\nabla F(x, \bxi)\|^2] \le \rho \|\nabla f(x)\|^2$ for all $x \in \R^n$
and requiring steplengths to be bounded by $\tfrac{2}{\rho L}$ for $\rho \ge 1$.
Under these conditions, their method achieves $\mathcal{O}(\tfrac{1}{K})$
convergence rate to first-order stationarity. With a similar bound on
steplengths, {\cite{Tsukada2024}} shows the convergence rate is bounded by
$\mathcal{O}(\tfrac{1}{K}) + \mathcal{O}(\tfrac{1}{N})$, where $N$ denotes the fixed
sample size per iteration.
\begin{table}[htbp]
        \caption{Comparison of stochastic {linesearch} methods}\label{tab: review}
        \scriptsize
        \begin{tabular}{L{0.8cm}L{2.2cm}L{2cm}L{2.4cm}L{1.5cm}L{2.2cm}L{3cm}}\toprule
            ref.& 
            prob. assumptions& 
            linesearch condition& 
            oracle& 
            batch size sequence& 
            {upper bound for stepsizes}& 
            iter. \& samp. complexity\\
            \midrule
            \cite{Cartis2018}& 
            $L$-smooth, nonconvex& 
            Armijo step search& 
            deterministic $f$, $a$-probabilistic $g$& 
            n/a& 
            n/a & 
            \multirow{8}{3cm}{$\E[T_\epsilon] = \mathcal{O}(\epsilon^{-2}) + \mathcal{O}(1)$. The $\mathcal{O}(1)$ is for number of unsuccessful iterations, which is equivalent to the iterations within backtracking linesearch. } \\
            \cmidrule{1-6}
            \cite{Paquette2020}& 
            $L$-smooth, nonconvex&
            Armijo step search& 
            probabilistic $f$, $a$-probabilistic $g$&
            n/a& 
            n/a&\\
            \cmidrule{1-6}
            \cite{Berahas2021} & 
            $L$-smooth, nonconvex & Armijo step search, relaxed by $2\epsilon_f$ & 
            $\epsilon_f$-noise $f$, $b$-probabilistic $g$  or  $c$-probabilistic $g$       & 
            n/a & 
            n/a &
            \\
            \midrule
            \cite{Jin2021,Jin2024,Jin2025}& 
            $L$-smooth, nonconvex &
            Armijo step search, relaxed by $2\epsilon_f$& 
            expected $\epsilon_f$-noise $f$ with tail probability,  $c$-probabilistic $g$  & 
            $\mathcal{O}(\epsilon^{-4})$& 
            n/a& 
            iter. complex. $\mathcal{O}(\epsilon^{-2})$, 
            samp. complex. $\mathcal{O}(\epsilon^{-6})$ in high probability to achieve $\|\nabla f\| \le \epsilon$ 
            \\
            \midrule
            \cite{Vaswani2019} & 
            $L_i$-smooth, nonconvex, strong growth cond. & 
            Armijo with fixed batch & 
            unbiased $f$ and $g$ & 
            constant & 
            $s \le \tfrac{2}{\rho L}$ ($\rho > 1$) & 
            $g$-stationarity: iter. and samp. $ 
            \mathcal{O}(\epsilon^{-2})$\\
            \midrule
            \cite{Vaswani2019}& 
            $L_i$-smooth, convex, interpolation&
            Armijo with fixed batch& 
            unbiased $f$ and $g$& 
            constant& 
            n/a& 
            $f$-suboptimality: iter. and samp. $\mathcal{O}(\epsilon^{-1})$ \\
            \midrule
            \cite{Jiang2023}& 
            $L_i$-smooth, convex&
            Armijo with fixed batch&  
            unbiased $f$ and $g$& 
            constant& 
            n/a& 
            $f$-suboptimality: iter. and samp. $\mathcal{O}(\epsilon^{-2})$\\
            \midrule
            \cite{Tsukada2024}& 
            $L_i$-smooth, nonconvex &
            Armijo with fixed batch& 
            unbiased $f$ and $g$, $g$ has bounded variance & 
            constant & 
            $s \le \tfrac{2}{L}$& 
            $g$-stationarity: iter. $\mathcal{O}(\epsilon^{-2})$, samp. $\mathcal{O}(\epsilon^{-4})$\\
            \midrule
            \rowcolor{lightgray}
            \textbf{SLAM} & 
            $L_i$-smooth, nonconvex&
            Armijo with fixed batch&
            unbiased $f$ and $g$, $g$ has bounded variance & 
            $\mathcal{O}(\epsilon^{-2})$ constant& 
            n/a & 
            $g$-stationarity: iter. $\mathcal{O}(\epsilon^{-2})$, samp. $\mathcal{O}(\epsilon^{-4})$ (\textbf{Thm.~\ref{thm.nonaymptotic constant nk pj}})\\
            \midrule
            \rowcolor{lightgray}
            \textbf{SLAM} & 
            $L_i$-smooth, nonconvex&
            Armijo with fixed batch&
            unbiased $f$ and $g$, $g$ has bounded variance & 
            $\{k+1\}$& 
            n/a & 
            $g$-stationarity: iter. $\tilde{\mathcal{O}}\left(\epsilon^{-2}\right)$, samp. $\tilde{\mathcal{O}}\left(\epsilon^{-4}\right)$ (\textbf{Cor.~\ref{cor.nonaymptotic increasing nk pj}})\\
            \midrule
            \rowcolor{lightgray}
            \textbf{SLAM}                                   & 
            $L_i$-smooth, convex &
            Armijo with fixed batch&
            unbiased $f$ and $g$, $g$ has bounded variance & 
            $\mathcal{O}(\epsilon^{-1})$&
            n/a & 
            $f$-suboptimality: iter. $\mathcal{O}(\epsilon^{-1})$, samp. $\mathcal{O}\left(\epsilon^{-2}\right)$ \textbf{Thm.~\ref{thm.convex.main}}\\
            \bottomrule
        \end{tabular}
    \scriptsize

    (i) $a$-probabilistic $g$: $\bP\left\{\| \nabla f(x) - \nabla F(x, \xi) \| \le \kappa t \|\nabla F(x, \xi)\| \right\} \ge 1-\delta$ where $t$ is the stepsize; (ii) $\epsilon_f$-noise $f$: $|F(x,\xi) - f(x) | \le \epsilon_f$; (iii) $b$-probabilistic $g$: $\bP\left\{\| \nabla f(x) - \nabla F(x, \xi) \| \le \theta \|\nabla f(x)\|\right\} \ge 1-\delta$; (iv)
        $c$-probabilistic $g$: $\bP\left\{\| \nabla f(x) - \nabla F(x, \xi) \| \le \max\{\zeta \epsilon_g, \kappa t \|\nabla F(x, \xi)\|\}\right\} \ge 1-\delta$ where $t$ is the stepsize; (v) the probabilistic $f$ in \cite{Paquette2020} has tail requirements for both $1^{\rm st}$ and $2^{\rm nd}$ moments; (vi) $T_\epsilon$: number of iterations until $\|\nabla f(x_k)\| \le \epsilon$ occurs; (vii) $L_i$-smooth: $\nabla F(\cdot, \xi)$ is $L$-Lipschitz continuous for each $\xi$
        (viii) $g$-stationarity: $\frac{1}{K}\sum_{k=0}^{K-1} \E[\|\nabla f(x_k)\|^2] \le \epsilon^2$; (ix)  $f$-suboptimality: $\frac{1}{K}\sum_{k=0}^{K-1} \E[f(x_k) - f(x^*)] \le \epsilon$ or $\frac{1}{K}\sum_{k=1}^{K} \E[f(x_k) - f(x^*)] \le \epsilon$.
\end{table}
In convex regimes~\cite{Vaswani2019,Vaswani2025,Jiang2023}, expected
suboptimality for an averaged iterate diminishes at $\mathcal{O}(\tfrac{1}{K})$,
under an {\em interpolation condition}; such a condition requires that $f$ is
convex and admits a finite-sum structure, the minimizer of $f$ is also the
minimizer for each component $F(x,\xi_i)$, an assumption that holds in selected
settings.  Jiang and Stich~\cite{Jiang2023} derive a rate of
$\mathcal{O}(\tfrac{1}{K} + \tfrac{1}{\sqrt{K}})$, when the interpolation
condition is relaxed. While the strong growth condition (nonconvex settings) and the interpolation condition (convex settings) may hold for a select set of learning problems, we believe that such conditions hold rarely, if at all, in other settings. These methods have been further studied in
\cite{Galli2023} for incorporating Polyak stepsize to set the initial stepsize
and in \cite{Fan2023} for a bilevel optimization setting. We provide a tabular comparison of related stochastic linesearch schemes  with our scheme in Table~\ref{tab: review}, emphasizing the requirements and clarifying the distinctions.  (c) \emph{Stochastic trust-region methods.} Stochastic trust region methods have been
developed~\cite{Chen2018,Blanchet2019} with probabilistic oracles while
improvements in sample complexity were provided in~\cite{Shashaani2018, Ha2025}
by using common random numbers. Convergence guarantees under standard stochastic
first-order oracles have been provided in \cite{Curtis2019,Curtis2022}, albeit
with deterministic sequences to bound the steplength.(d) \emph{Auto-conditioned
schemes.} In  \cite{Lan2024}, the stepsize is not set via a linesearch but is
adapted based on a steplength scheme which aligns steplengths with the reciprocal of the running maximum of the estimated Lipschitz constants at each
iteration. Apart from the above problem classes, {a stochastic linesearch scheme has also been considered for the pseudomonotone stochastic variational inequality problem by leveraging techniques from empirical process theory~\cite{iusem2019variance}.}

\begin{tcolorbox}
    \textbf{Gap}. Existing stochastic linesearch schemes often rely on restrictive assumptions, require   initial steplengths to satisfy stringent upper bounds given by $\mathcal{O}(1/L)$, and employ non-increasing steplength sequences. There appear to be no Armijo-based stochastic linesearch schemes with standard stochastic oracles that can contend with nonconvex settings with rate and complexity guarantees without imposing either non-increasing requirements or conservative bounds on steplengths that necessitate steplengths be bounded by  $\frac{a}{L}$ for some $a > 0$.  \end{tcolorbox}

\medskip

Motivated by this gap, we present a parameter-free Stochastic LineseArch  Method (\textbf{SLAM}), reliant on conducting an Armijo-based backtracking linesearch using sampled {objective} and gradient evaluations. Our method divides the $K$ iterations into $J(K)+1$ periods, where the $j$-th period has length $p_j$ for $j \in \left\{\, 0, 1, 2, \cdots, J(K)\, \right\}$. At the outset of each period, our method resets the initial steplength to its maximum level $s$ to conduct a linesearch, where $s$ is a user-defined parameter; otherwise, the initial steplength in the linesearch is set to the value of accepted steplength from the prior iteration. By using multiple cycles, the algorithm can periodically explore using a large stepsize while adapting to previously accepted stepsizes within each restart window. Importantly, when the batch size and the period length are set as fractional of $K$, the iteration and sample-complexity for computing an $\epsilon$-stationary point are provably $\mathcal{O}(\epsilon^{-2})$ and $\mathcal{O}\left(\epsilon^{-4}\right)$ for nonconvex function $f(\bullet)$. In fact, when algorithm parameters are selected in accordance with problem parameters,  the specialized counterparts of the sample-complexities closely resemble canonical sample-complexities in terms of their dependence on $L$ and moment bounds.  The claims are subsequently refined under both the PL-condition and convexity. Succinctly, our scheme can be summarized as follows. 

\medskip

\begin{tcolorbox} {\bf Summary of contributions}. Our proposed stochastic linesearch method ({\bf SLAM}) is a proximal stochastic gradient method equipped with an \emph{Armijo-enabled backtracking scheme} that uses stochastic zeroth/first-order oracles and resets initial linesearch stepsize periodically.\\

\smallskip

\noindent (a) {\em Smooth nonconvex settings.} When $f$ is smooth and potentially nonconvex, to achieve $\epsilon$-stationarity of the expected residual, {\bf SLAM} is characterized by  
iteration and sample-complexity bounds of $\mathcal{O}(\epsilon^{-2})$ and $\mathcal{O}\left(\epsilon^{-4}\right)$, respectively. Importantly, this guarantee does not necessitate the strong growth condition, allows for non-monotone steplength sequences,  requiring neither knowledge of $L$ nor the imposition of bounds of the form $\mathcal{O}(1/L)$ on the initial steplength. Furthermore, under suitable requirements on sample-size sequence $\{N_k\}$ and period sequence $\{ p_j\}$, asymptotic convergence in mean ensues.\\


\noindent (b) {\em PL and convex regimes.} We refine the above scheme to obtain  sample-complexity guarantees of $\mathcal{O}(\epsilon^{-2})$  for computing an $\epsilon$-optimal solution in both the convex and PL settings. Crucially, the claims in the convex regime do not require the interpolation condition.\\

\noindent (c) {\em Numerical experiments. } We compare the performance of {\bf SLAM} with another stochastic line-search method from \cite{Vaswani2019} as well as \texttt{Adam} and \texttt{SGD} with tuned steplengths, observing that {\bf SLAM}  competes well across all problem types without any need for tuning.
\end{tcolorbox}

Notably, the key to our analysis lies in the observation that the error introduced by the (random) stepsize sequence (arising from the Armijo-enabled linesearch)  can be effectively controlled via steplength resetting frequency  and by leveraging the property of uniform Lipschitz smoothness. Indeed, the error of sufficient descent in the objective function per iteration in expectation is proportional to the probability of  stepsize change between consecutive iterates. Uniform Lipschitz smoothness together with the linesearch mechanism provides an upper bound for  such a probability within each resetting window. Therefore, by appropriately choosing the resetting period, the canonical complexity guarantees may be recovered. Such an analysis technique can also be leveraged in the convex setting to recover the corresponding canonical complexity result. 

\smallskip

The remainder of the paper is organized into five sections. Section~\ref{sec:2} provides some background, outlines the assumptions, and formally presents the {\bf SLAM} algorithm. In Section~\ref{sec:3}, we provide the main convergence theory and specialize it to convex and PL settings in Section~\ref{sec:4}. In Section~\ref{sec:5}, we compare the performance of {\bf SLAM} with a related stochastic linesearch scheme as well as tuned variants of \texttt{Adam} and \texttt{SGD} and conclude in Section~\ref{sec:6}.

\medskip 

\noindent\textbf{Notation} We use $\R$ and $\N$ to denote the set of real numbers and the set of nonnegative integers, respectively. We use $\dom(r)$ to denote the effective domain of function $r$, i.e., $\dom(r) \triangleq \{x \in \R^n \mid r(x) < \infty\}$. 
 $\|\bullet\|$ denotes a general norm, $\lceil \bullet \rceil$ denotes the ceil function, and $[\bullet]_{+}$ denotes the function $\max\{\bullet,0\}$. For a convex function $r$, we use $\partial r(x)$ to denote the subdifferential of $r$ at $x$.

\section{Background, assumptions, and algorithmic framework} \label{sec:2}
In this section, we provide some background, introduce our assumptions,  and present our stochastic linesearch  framework.

\subsection{Background and assumptions}
We impose the following assumptions on $\phi$.  

\begin{assumption}\label{ass:prox} \em
The function $\phi$ in \eqref{eq:composite_opt} satisfies the following properties.
\begin{enumerate}
    \item[(i)] The function $r:\R^n \to (-\infty, \infty]$ is proper, closed, and convex. Additionally, $r$ is $L_r$-Lipschitz continuous over $\dom(r)$.
    \item[(ii)]  The function $f$ is proper and closed, $\dom(f)$ is convex, $\dom(r) \, \subseteq \, \mathrm{int(\dom(}f))$, and $f$ is continuously differentiable on $\mathrm{int(\dom(}f))$.  Its gradient map $\nabla f: \mathrm{int(\dom(}f)) \to \R^n$, denoted by $g(\bullet) \triangleq \nabla f(\bullet)$, is uniformly bounded on $\dom(r)$; i.e., there exists $B_g > 0$ such that $\left\|\, g (x) \, \right\| \, \le \, B_g $ for any $x \, \in \, \dom(r)$.  
    \item[(iii)] The function $\phi$ is lower bounded by $\phi_{\inf}$ on $\dom(r)$, where $\phi_{\inf} \, \triangleq \, \inf \left\{ \, \phi(x) \, \mid \, x \in \dom(r)\, \right\}$.$\hfill \Box$
\end{enumerate}
\end{assumption}

\medskip
We now impose requirements on the the integrand of the expectation $\mathbb{E}\left[\, F(x,\bxi) \,\right]$. Specifically, we require that  for any $\xi \, \in \, \Xi$, the function $F(\bullet,\xi)$  satisfies suitable smoothness requirements.

\medskip

\begin{assumption}\label{ass: F} \em
    For every $\xi \in \Xi$, $F(\bullet,\xi): \mathrm{int(\dom(}f)) \to \R$ is continuously differentiable and its gradient is Lipschitz continuous with constant $L$ on $\dom(r)$. $\hfill \Box$
\end{assumption}

\medskip

Several aspects of this assumption necessitate comment. First, we may instead require that $\nabla_x F(\bullet,\xi)$ is $\tilde{L}(\xi)$ is bounded and under a suitable integrability requirement on $\tilde{L}(\bxi)$, we may obtain an analog of the above result. For purposes of simplicity, we directly assume that $F(\bullet,\xi)$ is $L$-smooth for any $\xi \, \in \, \Xi$. Second and more importantly, we never need to know $L$ for purposes of the algorithm. This is in sharp contrast with most existing SGD-based schemes and their variants, where the theoretical guarantees are predicated on both the knowledge of $L$ and its subsequent usage in the selection of the steplength. For instance, a common choice in constant steplength regimes  emerges in the form of choosing $\gamma = \frac{a}{L}$, where $a$ is a suitably defined positive scalar. Such choices have profound implications in the empirical behavior beyond the onerous informational burden of knowing such a constant, often highly unlikely in any practical setting. In particular, such steplengths are aligned with the global Lipschitz constant and can be exceedingly small and may have a debilitating impact on the performance of the scheme. Our scheme does {\bf not} necessitate knowledge of the Lipschitz constant $L$, being a linesearch scheme.\\

Next, we discuss the requirements on the stochastic zeroth and first-order oracle. In particular, we assume the existence of stochastic zeroth and first-order oracles that given $x$, provide $F(x,\xi)$ and $\nabla F(x,\xi)$, respectively. The conditional unbiasedness and conditional boundedness of the second moment are imposed on our stochastic zeroth- and first-order oracles, as captured next.

\medskip

\begin{assumption}[Stochastic zeroth and first-order oracles] \label{ass: var}
    For any $x \in \dom(r)$,  there exists a constant $\sigma > 0$ such that the following hold almost surely.
    \begin{align*}
        \E\left[\, F(x,\bxi)\mid x \, \right] \, = \, f(x), \,\, \E\left[\, \nabla F(x,\bxi)\mid x \,\right] \, = \, g(x), \,\, \text{and} \,\, \E\left[\, \| \nabla F(x,\bxi) - g(x) \|^2 \mid x\, \right] \, \le \, \sigma^2. 
    \end{align*} $\hfill \Box$
\end{assumption}

\begin{remark}\label{rmk: 1}
    Under Assumptions \ref{ass:prox}--\ref{ass: var}, the following hold. 

   \noindent (i) The function $f$ is $L$-smooth over $\dom(r)$, since for any $(x, y) \in \dom(r) \times \dom(r)$,
              \begin{align*}
                  \left\|\, g(x) - g(y)\, \right\| & \, =\, \left\| \, \E\left[ \, \nabla F(x, \bxi)\, \right] - \E\left[\, \nabla F(y, \bxi)\, \right]\, \right\|                           \,  \le \, \E \left[\left\|\, \nabla F(x, \bxi) - \nabla F(y, \bxi) \,\right\|\, \right] \, \le \, L\left\|\, x - y\, \right\|.
              \end{align*}
    \noindent (ii)  Since $f$ and $F(\bullet, \xi)$ are $L$-smooth over $\dom(r)$, for any $(x, y) \in \dom(r) \times \dom(r)$, 
              \begin{align}
\begin{aligned}
\label{rmk: lipschitz}
                  |f(y) - f(x) - g(x)^\top(y-x)| &\le \tfrac{L}{2} \|y - x\|^2 \\
                  |F(y, \xi) - F(x,\xi) - \nabla F(x, \xi)^\top(y-x)| &\le \tfrac{L}{2} \|y - x\|^2.
              \end{aligned}
\end{align} $\hfill \Box$
\end{remark}

Akin to~\cite{Vaswani2019,Jiang2023}, our scheme relies on access to being able to generate random
realizations $F(\bullet,\xi_j)$ and $\nabla F(\bullet,\xi_j)$
for some $\xi_j \in \Xi$ and subsequently being able to
evaluate $F(\bullet,\xi_j)$ and $\nabla F(\bullet,\xi_j)$ at
any $x \in \dom(r)$.  {\em This is distinct from a standard black-box
oracle that generates $F(x,\tilde{\xi})$ and $\nabla
F(x,\tilde{\xi})$ for some $\tilde{\xi} \in \Xi$, given an
$x$; but does not allow for repeated evaluations of
$F(\bullet,\tilde{\xi})$ for a given realization $\tilde \xi$ (cf.~\cite{Jin2021})}. We proceed to consider sequences generated by Algorithm~\ref{alg: sls pk prox} and will provide a comprehensive discussion subsequently. We use $\tilde{f}_k$ and $\tilde{g}_k$ to denote the sample-mean objective and gradient at iteration $k = 0, 1, \hdots$, possibly using a mini-batch of size $N_k$ of i.i.d samples $\xi_{k,1},\cdots, \xi_{k,N_k}$, i.e.,
\begin{align}\label{sampledfg}
    \tf_k(\bullet) \, \triangleq \, \frac{1}{N_k} \sum_{i = 1}^{N_k} F(\bullet,\xi_{k,i})\quad \text{ and } \quad \tg_k(\bullet) \, \triangleq \, \frac{1}{N_k} \sum_{i = 1}^{N_k} \nabla F(\bullet,\xi_{k,i}).
\end{align}
    It follows from Assumption \ref{ass: var}, that, given $x_k \in \dom(r)$, the following hold almost surely.
    \begin{align} \label{eq: sample var}
        \E\left[\, \tf_k(x_k)\, \big|\, x_k\, \right] \, & = \, f(x_k),\  \E\left[\tg_k(x_k)\, \big|\, x_k\right] = g(x_k), \text{ and}\;  \\
        \E\left[\, \|\tg_k(x_k) - g(x_k)\|^2\, \big|\, x_k\, \right] \, & \le \, \frac{\sigma^2}{N_k}. \,\, 
    \end{align}

We use proximal gradient-type method to deal with the nonsmooth function $r(\bullet)$ in \eqref{eq:composite_opt}. For a constant $t > 0$, the proximal operator of $t\times r(\bullet)$ defined as
\begin{align*}
    \prox_{tr}(x) \triangleq  \argmin{y \in \R^n} \left\{ r(y) + \frac{1}{2t} \| y - x \|^2 \right\} = \argmin{y \in \R^n} \left\{ r(y)t + \frac{1}{2} \| y - x \|^2 \right\}.
\end{align*}
For a given constant $t \in (0, \infty)$, $\prox_{tr}(x)$ is a singleton for any $x \in \R^n$ \cite[Theorem 6.3]{Beck2017}. The following lemma is established in \cite{Beck2017} for general proper, closed and convex function. We restate it here for completeness.
\begin{lemma}\em \cite[Theorem 6.39]{Beck2017}\label{lem.prox}
    For any $x, u \in \R^n$, and $t > 0$, the following claims are equivalent:\\
        (i) $u \, = \, \prox_{tr}(x)$;
        (ii) $x - u \, \in \, t \partial r(u)$;
        (iii) $(x-u)^\top (y-u) \, \le \, t r(y) - t r(u)$ for any $y \, \in \, \R^n$. $\hfill \Box$
\end{lemma}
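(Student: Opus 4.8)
The plan is to work straight from the definition $\prox_{tr}(x) = \argmin_{y \in \R^n}\{\, tr(y) + \tfrac12\|y-x\|^2\,\}$ and to establish the cycle (i) $\Leftrightarrow$ (ii) $\Leftrightarrow$ (iii). First I would record that the objective $h(y) \triangleq tr(y) + \tfrac12\|y-x\|^2$ is proper, closed, and $1$-strongly convex: $r$ is proper, closed, convex by Assumption~\ref{ass:prox}(i), the quadratic term is real-valued, continuous, and $1$-strongly convex, and $\dom(h) = \dom(r) \neq \emptyset$. Hence $h$ admits a unique minimizer, which is exactly the statement that $\prox_{tr}(x)$ is a singleton as already quoted from \cite[Theorem 6.3]{Beck2017}.

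For (i) $\Leftrightarrow$ (ii), I would invoke Fermat's rule for convex functions: $u$ minimizes $h$ over $\R^n$ if and only if $0 \in \partial h(u)$. Since the quadratic term is finite everywhere, $\mathrm{int}(\dom(\tfrac12\|\cdot - x\|^2)) \cap \dom(r) = \dom(r) \neq \emptyset$, so the subdifferential sum rule holds with no further qualification and $\partial h(u) = t\,\partial r(u) + (u - x)$. Therefore $u = \prox_{tr}(x)$ is equivalent to $0 \in t\,\partial r(u) + (u - x)$, i.e., to $x - u \in t\,\partial r(u)$, which is (ii).

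For (ii) $\Leftrightarrow$ (iii), this is simply the definition of the convex subdifferential, rescaled by $t > 0$. The condition $x - u \in t\,\partial r(u)$ is the same as $\tfrac1t(x-u) \in \partial r(u)$, which by definition means $r(y) \ge r(u) + \tfrac1t(x-u)^\top(y-u)$ for every $y \in \R^n$; multiplying by $t>0$ and rearranging gives $(x-u)^\top(y-u) \le tr(y) - tr(u)$ for all $y$, i.e., (iii), and the computation is reversible so (iii) $\Rightarrow$ (ii) likewise.

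The only step that requires any real attention is the application of the subdifferential sum rule in (i) $\Leftrightarrow$ (ii); I would remark that it is precisely the everywhere-finiteness (and continuity) of $\tfrac12\|\cdot - x\|^2$ that frees us from needing any constraint qualification beyond $\dom(r) \neq \emptyset$. The remaining equivalences are direct unwindings of definitions, so I anticipate no genuine obstacle, and the whole argument is short; including it mainly serves to keep the exposition self-contained, as the paper states.
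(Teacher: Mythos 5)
Your proof is correct. The paper does not actually prove this lemma --- it is restated verbatim from \cite[Theorem 6.39]{Beck2017} "for completeness" --- and your argument (Fermat's rule plus the Moreau--Rockafellar sum rule for the strongly convex objective $tr(y)+\tfrac12\|y-x\|^2$, with the everywhere-finiteness of the quadratic dispensing with any constraint qualification, followed by the definitional unwinding of the subdifferential for (ii) $\Leftrightarrow$ (iii)) is precisely the standard textbook proof of that cited result, so nothing is missing.
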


\subsection{Algorithmic description}

Our algorithm is inspired by the decades of success seen in the leveraging of globalization strategies such as linesearch and trust-region techniques within large-scale nonlinear programming solvers such as \texttt{SNOPT}~\cite{Gill2005}, \texttt{KNITRO}~\cite{Byrd2006}, amongst others. In contrast, when contending with expectation-valued problems, the challenge is that the directions afforded by stochastic gradients may be quite poor. Consequently, conducting a linesearch using a poor direction (not necessarily a descent direction) may prove less beneficial. We address this concern by utilizing a mini-batch framework that generates either accurate or increasingly accurate estimates of the gradient by using either a suitably defined constant or an increasing batch-size of sampled gradients. Second, we introduce a sequence of cycles where the steplength sequence is non-increasing within a cycle but is reinitialized to the maximum steplength of $s$ at the beginning of a new cycle. As a consequence, it can be further observed that steplength sequences do not decay to zero as is traditionally the case in classical SGD theory. We now present our scheme in Algorithm~\ref{alg: sls pk prox} and  highlight some distinguishing features of our scheme.
\begin{enumerate}
    \item \underline{\em Conducting linesearch with a fixed set of  samples.} At each iteration $k$, a batch of samples, denoted by $\{\xi_{k,j}\}_{j=1}^{N_k}$, is generated a priori and the objective and gradient estimates used in linesearch are all evaluated using this fixed batch. In effect, the search process and sampling schemes are distinct and non-overlapping; i.e., while searching for an acceptable steplength, we do not sample. This approach ensures that the linesearch is well-defined; namely, the \texttt{while} loop in Line~\ref{line.backtrack} will terminate after a finite number of reductions in $t_k$. This is analyzed in Lemma~\ref{lm.prox:t_lb}. 
    \item \underline{\em Periodically resetting the initial stepsize.} With an input sequence $\{p_j\}$ where $p_j \in \N\setminus\{0\}$, the $K$ iterations is divided into $J(K)+1$ cycles, where $J(K)$ is defined in \eqref{def:JK} and at the beginning of each cycle, the initial stepsize for backtracking linesearch is reset to $s$, while within each cycle, the initial stepsize is set to be the accepted stepsize from the prior iteration. Consequently and as mentioned earlier, the stepsize sequence within each restart period is non-increasing and may increase when a new cycle is started. Figure~\ref{fig:cycle} demonstrates an example of the accepted stepsizes and the associated periods.

\begin{figure}[htb]
\centering
\includegraphics[width=5in]{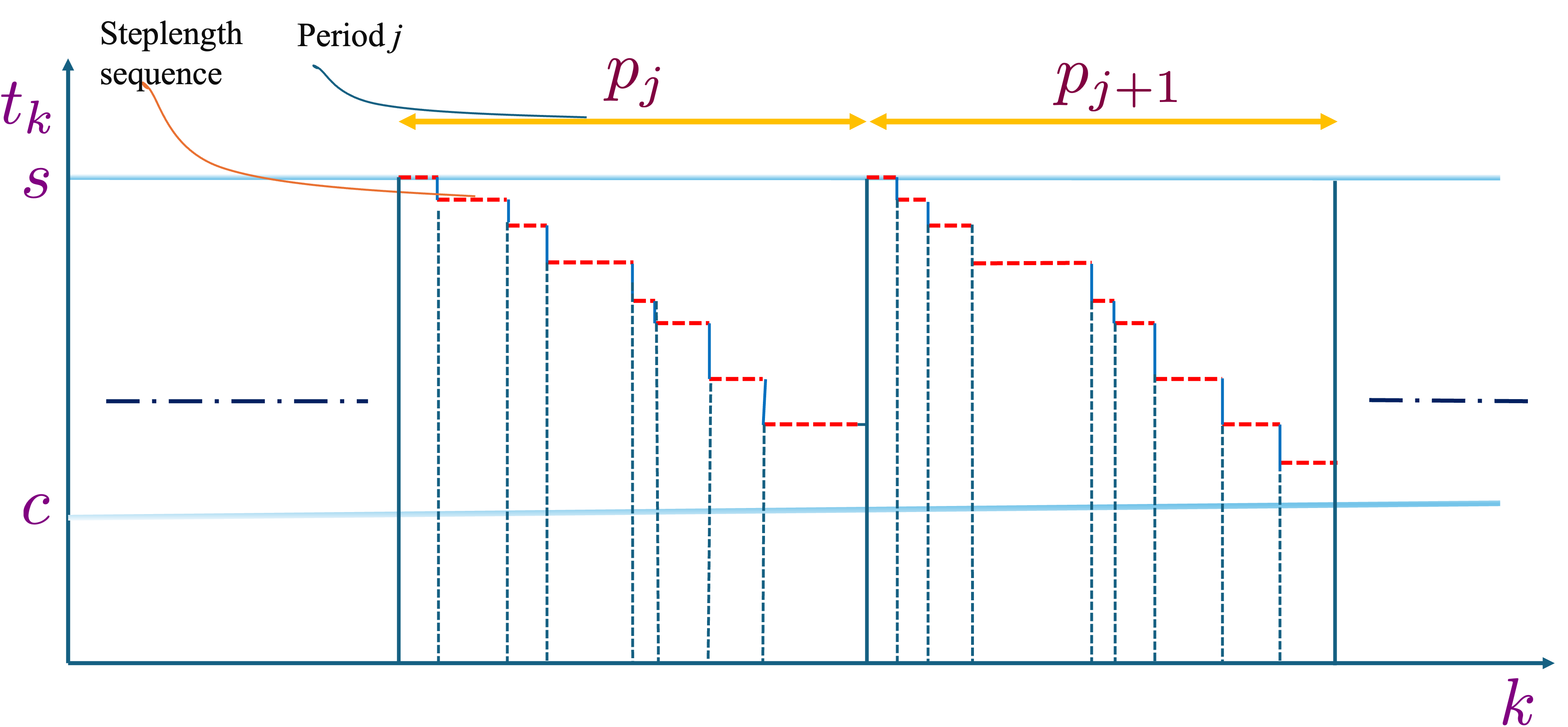}
\caption{Example of period sequence $\{p_j\}$ and steplength sequence $\{t_k\}$}
\label{fig:cycle}
\end{figure}
    This strategy allows the algorithm to periodically explore using a large stepsize while remaining adapting to previously accepted stepsizes within the prior iteration within each restart window. This is motivated by the observation that through the linesearch process, the steplength adapts to the curvature of the problem in a given iteration and it may help to retain that alignment in the subsequent iteration. As shown in Figure~\ref{fig:cycle}, the stepsize sequence $\left\{ \, t_k \, \right\}$ within a period is nonincreasing. Our convergence analysis derives the sufficient conditions for the choices of the sequence $\{p_j\}$. Notably, when there is only a single cycle (i.e., $p_0 = K$), the initial stepsize in the linesearch is never reset to $s$ again and is always set by $t_{k-1}$, resulting in a non-increasing stepsize sequence. This special case is the scheme in \cite{Vaswani2019} with \texttt{opt=0} in the \texttt{reset} options. However, the analysis in \cite{Vaswani2019} requires the stepsizes to be bounded by $\tfrac{2}{\rho L}$ for some $\rho > 1$ and the strong growth condition; neither requirement is needed in our analysis and it bears reminding that the analysis in \cite{Vaswani2019} necessitates knowledge of $L$ and may lead to miniscule steps if the global Lipschitz constant $L$ is massive. 
    \item \underline{\em Armijo linesearch condition for proximal gradient methods}. At iteration $k \in \N$, with current iterate $x_k \in \R^n$, trial stepsize $t_k \in (0, \infty)$, and mini-batch stochastic gradient $\tg_k(x_k) \in \R^n$, we compute the candidate of new iterate as follows,
\begin{align*}
    x_{k}(t_k) \, \gets \, \prox_{t_k r}\left(\, x_k - t_k \tg_k(x_k)\, \right) = \argmin{y \in \R^n}  \left\{ \, r(y) + \frac{1}{2t_k} \left\| \, x_k - t_k \tg_k(x_k) - y \right\|^2 \, \right\}.
\end{align*}
We then test the following generalized Armijo condition  \eqref{eq.prox.armijo} (see \cite{Bertsekas1976,Beck2017}), where $\alpha \in (0,1)$ is an input algorithm parameter. If this condition is violated, we then reduce $t_k$ by a factor of $\beta \in (0,1)$ and recompute $x_k(t_k)$, repeating this process until the condition is satisfied.
\begin{align}
    \tilde{\phi}_k(x_{k}(t_k) ) - \tilde{\phi}_k (x_k) \le - \frac{\alpha}{t_k} \|x_k - x_k(t_k)\|^2. \label{eq.prox.armijo}
\end{align}
\end{enumerate}

Recall that a point $x^* \in \dom(r)$ is a stationary point of \eqref{eq:composite_opt} if and only if $- \nabla f(x^*) \in \partial r(x^*)$. For a point $x \in \R^n$ and $t > 0$, we introduce the following residual function, akin to the one described in \cite{Beck2017}: 
\begin{align} \label{eq.G.prox}
    G_{t}(x) \, \triangleq \, \frac{1}{t}\left(\, x - \prox_{tr}(x - t g(x))\, \right).
\end{align}
It is worth reminding the reader that when $r$ represents the indicator function associated with a closed and convex set $X \subseteq \R^n$, then this residual function is a scaling of the natural map~\cite{Facchinei2003}, an equation-based residual associated with a variational inequality problem. In particular, for a variational inequality problem VI$(X,\hat{F})$, where $X \subseteq \R^n$ and $\hat{F}: \R^n \to \R^n$ is a continuous map, then $x$ is a solution of VI$(X,\hat{F})$ if and only if 
\begin{align}
F^{\rm nat}_X(x) \, \triangleq \, x - \Pi_X\left[\, x - t \hat{F}(x) \, \right] \, = \, 0. 
\end{align}
Furthermore, VI$(X,\hat{F})$ represents the stationarity conditions of our stochastic optimization problem when $r$ represents the indicator function of a closed and convex set $X$ and $\hat{F}(x) \, \triangleq \, \nabla f(x)$.
Correspondingly, for a stochastic gradient mapping associated with a sample-size $N_k$, as denoted by $\tg_k(\bullet)$, and $t > 0$, we define the residual $\tilde G^k_{t}$ as 
\begin{align}
    \tilde G^k_{t}(x) \, \triangleq \, \frac{1}{t}\left(\, x - \prox_{tr}(x - t \tg_k(x))\, \right). \label{eq.Gtilde.prox}
\end{align}
Note that with the above notation, the stochastic variant of the Armijo condition \eqref{eq.prox.armijo}  at step $k$ requires that the acceptable steplength $t_k$ satisfies a suitably defined reduction in sampled function value, given a prescribed scalar $\alpha > 0$. This condition can be equivalently stated as follows, by leveraging the residual function $\tilde{G}^k$, defined in  \eqref{eq.prox.armijo.G}. 
\begin{align} \label{eq.prox.armijo.G}
    \tilde \phi_k(x_{k}(t_k)) - \tilde \phi_k(x_k) \, \le \, -\alpha t_k \left\|\, \tilde G^k_{t_k}(x_k)\, \right\|^2.
\end{align}
In the special case of projected gradient methods, $G_t$ is often referred to as the generalized projected gradient~\cite{Ghadimi2016}. It can be readily seen that when $r(\bullet) \equiv 0$,
$\tilde G_t^k(x_k)$ and $G_t(x_k)$ reduce to the sampled
gradient $\tg_k(x_k)$ and the true gradient $g(x_k)$,
respectively. 
The following lemma establishes a relationship between $\|G_t(x)\|$ and the stationary condition. It was established in \cite{Beck2017} and we restate it here for completeness.
\begin{lemma}\em \cite[Theorem 10.7(b)]{Beck2017}
    For any $x^* \in \mathrm{int}(\dom(f))$ and $t > 0$, $G_t(x^*) = 0$ if and only if $x^*$ is a stationary point of \eqref{eq:composite_opt}, i.e., $-\nabla f(x^*) \, \in \, \partial r(x^*)$. $\hfill \Box$
\end{lemma}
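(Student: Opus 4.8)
The plan is to reduce the statement directly to the prox characterization in Lemma~\ref{lem.prox}. First, unwinding the definition of the residual map in \eqref{eq.G.prox}, observe that $G_t(x^*) = 0$ holds if and only if
\[
x^* \, = \, \prox_{tr}\!\left(\, x^* - t\,g(x^*)\, \right),
\]
which is meaningful precisely because $x^* \in \mathrm{int}(\dom(f))$ guarantees $g(x^*) = \nabla f(x^*)$ is well-defined, and because $\prox_{tr}(\cdot)$ is single-valued for $t > 0$ (as noted after its definition, via \cite[Theorem 6.3]{Beck2017}).

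Next I would invoke Lemma~\ref{lem.prox} (the equivalence of items (i) and (ii) there) with the substitution $x \leftarrow x^* - t\,g(x^*)$ and $u \leftarrow x^*$. This yields
\[
x^* = \prox_{tr}\!\left(x^* - t\,g(x^*)\right)
\quad \Longleftrightarrow \quad
\left(x^* - t\,g(x^*)\right) - x^* \, \in \, t\,\partial r(x^*),
\]
i.e., $-\,t\,g(x^*) \in t\,\partial r(x^*)$. Since $t > 0$ and $\partial r(x^*)$ is a (convex) subset of $\R^n$, scaling by $t$ is a bijection of sets, so this is equivalent to $-\,g(x^*) \in \partial r(x^*)$. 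Recalling $g(\bullet) = \nabla f(\bullet)$, we conclude that $G_t(x^*) = 0$ if and only if $-\nabla f(x^*) \in \partial r(x^*)$, which is exactly the stationarity condition for \eqref{eq:composite_opt}.

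There is essentially no substantive obstacle here: the statement is a one-step corollary of Lemma~\ref{lem.prox}, and the only points requiring a word of care are (a) ensuring $\prox_{tr}$ is single-valued and $\nabla f(x^*)$ exists, both handled by the hypotheses $t > 0$ and $x^* \in \mathrm{int}(\dom(f))$ together with Assumption~\ref{ass:prox}, and (b) the elementary observation that $-tg(x^*)\in t\,\partial r(x^*)$ is equivalent to $-g(x^*)\in\partial r(x^*)$ for $t>0$. No inequality estimates or limiting arguments are needed.
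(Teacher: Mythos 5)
Your proof is correct. The paper itself supplies no argument for this lemma --- it simply restates \cite[Theorem 10.7(b)]{Beck2017} and defers to that reference --- and your derivation (unwinding \eqref{eq.G.prox} to the fixed-point identity $x^*=\prox_{tr}(x^*-t\,g(x^*))$, then applying the equivalence of items (i) and (ii) of Lemma~\ref{lem.prox} and cancelling the factor $t>0$) is exactly the standard argument underlying the cited result, with the single-valuedness of $\prox_{tr}$ and the differentiability of $f$ at $x^*$ correctly accounted for.
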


We will derive convergence guarantees and rates for the stationarity metric  $\|G_s(x_{R_K})\|$, where $R_K$ denotes an index randomly selected via a uniform distribution from
$\left\{\, 0,1,\cdots,K-1\, \right\}$, while $s$ denotes the user-defined initial steplength. Finally, we define the history $\sF_k$  generated by the Algorithm~\ref{alg: sls pk prox} at iteration $k$ as 
\begin{align*}
    \sF_0 \, \triangleq \, \{x_0\}\, \text{ and }\,\, \sF_k \, \triangleq \, \left\{ \, x_0,\, (\xi_{0,j})_{j=1}^{N_0}, (\xi_{1,j})_{j=1}^{N_1}, \cdots, (\xi_{k-1,j})_{j=1}^{N_{k-1}} \, \right\}, k \ge 1.
\end{align*}

\begin{tcolorbox}
\begin{algorithm}[H]\caption{Stochastic LineseArch  Method ({\bf SLAM})}\label{alg: sls pk prox}
    \begin{algorithmic}[1]
        \Require $x_0,~\alpha \in (0,1),~\beta \in (0,1),~\{p_j\}$ where $p_j \in \N$, $s \in {(0, \infty)},~\{N_k\}$ where $N_k \in \N$.
        \State Set $j \gets 0$; Set $d \gets 1$;
        \For{ $k\in\{0, 1, \ldots, K-1\}$}
        \State Generate samples $\{\xi_{k,j}\}_{j=1}^{N_k}$ and define $ \tf_k(\bullet)$ and $\tg_k(\bullet)$ as in \eqref{sampledfg};
        \If{$k = 0$ or $d = p_j$} \label{line.start set initial t}
        \State Set $t_{k}^{\mathrm{init}} \gets s$; \label{line.set_tk start cycle}
        \State Set $d \gets 1$, $j \gets j + 1$;
        \Else
        \State Set $t_{k}^{\mathrm{init}} \gets t_{k-1}$; \label{line.set_tk within cycle}
        \State Set $d \gets d + 1$;
        \EndIf \label{line.end set initial t}
        \State Set $t_k \gets t_{k}^{\mathrm{init}}$;
        \While{ {\eqref{eq.prox.armijo} is not satisfied}} \label{line.backtrack}
        \State Set $t_k \gets \beta t_k$; \label{line.prox backtrack}
        \EndWhile
        \State Update $x_{k+1} \gets x_k(t_k)$; 
        \EndFor
    \end{algorithmic}
\end{algorithm}
\end{tcolorbox}
\section{Convergence analysis} \label{sec:3}
In this section, we first state and prove a set of useful Lemmas in Section~\ref{sec:supporting lemmas}, which then pave the way for the development of rate statements and iteration/sample-complexity guarantees in Section~\ref{subsec.nonaymptotic}.

\subsection{Supporting lemmas}\label{sec:supporting lemmas}
The first lemma derives a deterministic lower and upper bound for the (random) sequence of steplengths $\left\{\, t_k \, \right\}$ generated by Algorithm~\ref{alg: sls pk prox}. The proof follows an approach used for deterministic settings in~\cite[Lemma 4.23]{Beck2014}. 

\medskip

\begin{lemma}[Bounds on $t_k$]\label{lm.prox:t_lb}\em
    Consider the sequences generated by Algorithm~\ref{alg: sls pk prox} and suppose that Assumptions~\ref{ass:prox}, \ref{ass: F}, and \ref{ass: var} hold. Then, for any $k \in \N$, 
    \begin{align}
        \min\left\{\, s,\frac{2(1-\alpha)\beta}{L}\, \right\} \, \triangleq \,  c \, \le \, t_k \, \le \, s. \label{eq:prox.t_bound}
    \end{align}
\end{lemma}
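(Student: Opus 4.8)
The plan is to prove the two inequalities separately. The upper bound $t_k\le s$ is elementary: it follows from the fact that backtracking only shrinks the steplength, combined with an induction on $k$. The lower bound $t_k\ge c$ is the substantive part, and it reduces to a single observation — that the Armijo test \eqref{eq.prox.armijo} can never fail once the trial steplength has been reduced to $\tfrac{2(1-\alpha)}{L}$. That one fact simultaneously guarantees that the \texttt{while} loop in Line~\ref{line.backtrack} terminates (so $t_k$ is well defined) and limits how far the backtracking can push $t_k$ down.

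For the upper bound: the accepted $t_k$ equals the initial trial $t_k^{\mathrm{init}}$ multiplied by a nonnegative integer power of $\beta\in(0,1)$, so $t_k\le t_k^{\mathrm{init}}$. By the algorithm, $t_k^{\mathrm{init}}$ is either $s$ or $t_{k-1}$; since $t_0^{\mathrm{init}}=s$, an induction on $k$ with hypothesis $t_{k-1}\le s$ gives $t_k^{\mathrm{init}}\le s$, hence $t_k\le s$.

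The heart of the argument is the following: fix $k$ and any trial steplength $t>0$, and set $x_k(t)=\prox_{tr}(x_k-t\tg_k(x_k))$. Each $F(\bullet,\xi)$ is $L$-smooth, so the finite average $\tf_k$ is $L$-smooth as well, and \eqref{rmk: lipschitz} applied to $\tf_k$ gives $\tf_k(x_k(t))\le \tf_k(x_k)+\tg_k(x_k)^\top(x_k(t)-x_k)+\tfrac{L}{2}\|x_k(t)-x_k\|^2$. Next, apply Lemma~\ref{lem.prox}(iii) with $x=x_k-t\tg_k(x_k)$, $u=x_k(t)$, $y=x_k$; expanding the inner product and rearranging yields $\tg_k(x_k)^\top(x_k(t)-x_k)\le r(x_k)-r(x_k(t))-\tfrac{1}{t}\|x_k-x_k(t)\|^2$. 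Substituting this into the smoothness bound and adding $r(x_k(t))$ to both sides gives
\begin{align*}
\tilde\phi_k(x_k(t))-\tilde\phi_k(x_k)\ \le\ \Bigl(\tfrac{L}{2}-\tfrac{1}{t}\Bigr)\|x_k-x_k(t)\|^2 ,
\end{align*}
and whenever $t\le\tfrac{2(1-\alpha)}{L}$ the coefficient on the right is $\le -\tfrac{\alpha}{t}$, so \eqref{eq.prox.armijo} holds at steplength $t$.

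To finish: since $\beta^m t_k^{\mathrm{init}}\le\tfrac{2(1-\alpha)}{L}$ for all sufficiently large $m$, the \texttt{while} loop terminates. For the bound, either (i) no backtracking occurs and $t_k=t_k^{\mathrm{init}}\in\{s,t_{k-1}\}$, or (ii) backtracking occurs at least once, in which case the rejected trial $t_k/\beta$ must violate \eqref{eq.prox.armijo}, so by the contrapositive of the previous paragraph $t_k/\beta>\tfrac{2(1-\alpha)}{L}$, i.e. $t_k>\tfrac{2(1-\alpha)\beta}{L}\ge c$. In case (i), an induction on $k$ (base case $t_0^{\mathrm{init}}=s\ge c$; inductive step using $t_{k-1}\ge c$ and $s\ge c$) gives $t_k^{\mathrm{init}}\ge c$. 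Either way $t_k\ge c=\min\{s,\tfrac{2(1-\alpha)\beta}{L}\}$. I do not expect a genuine obstacle here; the only points needing care are keeping the signs correct when rearranging Lemma~\ref{lem.prox}(iii), invoking $L$-smoothness of the \emph{sampled} objective $\tf_k$ (which is what the Armijo test actually evaluates) rather than of $f$, and threading the two inductions so that both bounds on $t_{k-1}$ are in hand when $t_k^{\mathrm{init}}=t_{k-1}$.
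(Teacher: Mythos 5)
Your proof is correct and follows essentially the same route as the paper's: $L$-smoothness of the sampled objective $\tf_k$ combined with Lemma~\ref{lem.prox}(iii) yields $\tilde\phi_k(x_k(t))-\tilde\phi_k(x_k)\le\bigl(\tfrac{L}{2}-\tfrac{1}{t}\bigr)\|x_k-x_k(t)\|^2$, which is then played against the failure of the Armijo test at the rejected trial $t_k/\beta$. The only cosmetic difference is that you state the key inequality in contrapositive form (Armijo must hold once $t\le\tfrac{2(1-\alpha)}{L}$, which also makes finite termination of the \texttt{while} loop explicit) and you spell out the induction for the no-backtracking case slightly more carefully than the paper does.
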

\begin{proof}
The upper bound requirement, $t_k \le s$ for all $k \in \N$, follows immediately from Lines \ref{line.set_tk start cycle}, \ref{line.set_tk within cycle}, and \ref{line.prox backtrack} of Algorithm~\ref{alg: sls pk prox}. Note that within each cycle $j \in \N$, the stepsizes $\{t_k\}$ for  are non-increasing due to Lines \ref{line.set_tk within cycle} and  \ref{line.prox backtrack} in Algorithm~\ref{alg: sls pk prox}.\\

\noindent In establishing the lower bound, we
observe that  if the initial stepsize $s$ satisfies the Armijo
condition \eqref{eq.prox.armijo}, then $t_k = s$. Otherwise, the \texttt{while} loop in the backtracking
linesearch  in Algorithm~\ref{alg: sls pk prox}
has at least one inner iteration and the accepted $t_k$ satisfying Armijo condition
\eqref{eq.prox.armijo} would be less than $s$. Therefore, the prior trial steplength, given by $t_k/\beta$,
does not satisfy \eqref{eq.prox.armijo}, i.e.,
    \begin{align}
        \tilde\phi_k\left(\, x_k\left(\tfrac{t_k}{\beta}\right)\, \right) - \tilde\phi_k(x_k) \, > \, - \frac{\alpha \beta}{t_k} \left\|\, x_k - x_k\left(\tfrac{t_k}{\beta}\right)\, \right\|^2, \label{eq.lem1.armijo upper bound}
    \end{align}
    where $\tilde G_{\frac{t_k}{\beta}}^k(x_k)$ is defined in \eqref{eq.Gtilde.prox}. On the other hand, since $\tf_k$ is $L$-smooth (by Assumption \ref{ass: F}), we have that the following inequality holds.
    \begin{align}
        \tf_k\left(x_k\left(\tfrac{t_k}{\beta}\right)\right) - \tf_k(x_k) \le - \tg _k(x_k) ^\top \left(x_k - x_k\left(\tfrac{t_k}{\beta}\right)\right)  + \frac{L}{2}\left \| \, x_k - x_k\left(\tfrac{t_k}{\beta}\right) \, \right \|^2, \label{eq: lem1.f_smooth}
    \end{align}
By the prox Theorem (Lemma \ref{lem.prox}), for a general $t>0$, we have
\begin{align*}
    &&\left(x_k - t\tg_k(x_k) - x_k(t)\right)^\top \left(x_k - x_k(t) \right) &\le t \left(r(x_k) - r(x_k(t))\right) \\
    \iff && \tfrac{1}{t}\left\|x_k - x_k(t) \right\|^2 - \tg_k(x_k)^\top \left(x_k - x_k(t) \right) &\le r(x_k) - r(x_k(t))
\end{align*}
By substituting $t = \tfrac{t_k}{\beta}$, adding this inequality to \eqref{eq: lem1.f_smooth}, and rearranging, we obtain
\begin{align}
\nonumber    \tilde \phi_k \left(x_k\left(\tfrac{t_k}{\beta}\right)\right) - \tilde \phi_k(x_k)= &\tf_k\left(x_k\left(\tfrac{t_k}{\beta}\right)\right) + r\left(x_k\left(\tfrac{t_k}{\beta}\right)\right) - \tf_k(x_k) - r(x_k) \\
    \le &  \left(\tfrac{L}{2} - \tfrac{\beta}{t_k} \right) \left \| x_k - x_k\left(\tfrac{t_k}{\beta}\right) \right \|^2 \label{eq.lem1.function reduction upper bound}
\end{align}
Combining the above inequality with \eqref{eq.lem1.armijo upper bound}, we obtain
    \begin{align*}
       - \frac{\alpha \beta}{t_k}  \, < \, \frac{L}{2} - \frac{\beta}{t_k} \quad \iff \quad t_k > \frac{2(1-\alpha)\beta}{L}.
    \end{align*}
    The two possibilities lead to $t_k \ge \min\left\{\, s, \frac{2(1-\alpha)\beta}{L}\, \right\}$.
\end{proof}
\begin{remark}
Note that in the result of \eqref{eq.lem1.function reduction upper bound}, the right-hand-side may be positive, i.e., we do not require $t_k \le \frac{2\beta}{L}$ in order to guarantee a positive reduction between $\tilde \phi_k \left(x_k\left(\tfrac{t_k}{\beta}\right)\right)$ and $\tilde \phi_k(x_k)$. $\hfill \Box$
\end{remark}
Given the stepsize sequence $\{t_k\}_{k=0}^{K-1}$ generated by Algorithm~\ref{alg: sls pk prox}, we define the random variable $T(K)$ as 
\begin{align*}
    T(K) \triangleq \sum_{k=0}^{K-1} {\bf 1}_{\{t_{k} \neq t_{k-1}\}}\text{ with } t_{-1} \triangleq s,
\end{align*}
i.e., $T(K)$ represents the number of times that the steplength changes during $K$ iterations. We have the following lemma that shows that $T(K)$ is bounded from above by a deterministic quantity, beyond an obvious bound that $T(K) \le K$. This bound is derived by utilizing an a priori specified sequence of cycle lengths, denoted by $\left\{ p_j\right\}$, and considering the worst-case of the number of updates of $t_k$ withing any given cycle.

\begin{lemma}\label{lem.T bound prox}
 \em   Consider the sequences generated by Algorithm~\ref{alg: sls pk prox} and suppose that Assumptions~\ref{ass:prox}, \ref{ass: F}, and \ref{ass: var} hold. Then for any $K \ge 0$, the random variable $T(K)$, representing the number of times that the steplength $t_k$ changes in $K$ iterations, is bounded from above by a deterministic scalar, as prescribed next. 
              \begin{align*}
    T(K) \, \le \, (J(K)+1) \left\lceil\, \left[\, \log_{1/\beta} \left(\, \frac{sL}{2(1-\alpha)} \, \right)\, \right]_{+} + 2 \, \right\rceil,
\end{align*}
where \begin{align}
    \label{def:JK}
J(K) \, \triangleq \, \min\left\{\, l \in \mathbb{N} \ \biggm| \, \ {\displaystyle \sum_{j=0}^l p_j} \ge K\, \right\},\end{align} i.e., $J(K)+1$ represents the the total number of cycles arising from cycle length sequence $\left\{p_j\right\}$ and number of iterations $K$.
\end{lemma}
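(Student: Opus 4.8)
The plan is to bound the number of steplength changes contributed by each of the $J(K)+1$ cycles separately and then add these bounds. Fix a cycle indexed by $j$, comprising the iteration indices $I_j = \{k_0, k_0+1, \ldots, k_0 + p_j - 1\}$ (the last cycle may be truncated by $K$, which only decreases the count). By the structure of Algorithm~\ref{alg: sls pk prox}, the initial trial steplength equals $t_{k_0}^{\mathrm{init}} = s$ at the first index of the cycle, while $t_k^{\mathrm{init}} = t_{k-1}$ for every interior index $k$; in both cases the backtracking loop in Line~\ref{line.backtrack} multiplies the initial value by $\beta^{m_k}$, where $m_k \in \N$ denotes the number of reductions. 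Hence $t_{k_0} = \beta^{m_{k_0}} s$ and $t_k = \beta^{m_k} t_{k-1}$ for interior $k$, which telescopes to $t_{k_0 + \ell} = s\,\beta^{m_{k_0} + \cdots + m_{k_0+\ell}}$ for each $\ell$ with $0 \le \ell \le p_j - 1$.

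Next I would count $\sum_{k \in I_j} \mathbf{1}_{\{t_k \neq t_{k-1}\}}$. At the starting index $k_0$ this indicator is trivially at most $1$. At an interior index $k$, the identity $t_k = \beta^{m_k} t_{k-1}$ forces $t_k \neq t_{k-1}$ only when $m_k \ge 1$, so there $\mathbf{1}_{\{t_k \neq t_{k-1}\}} \le m_k$. Summing, the number of changes within cycle $j$ is at most $1 + \sum_{k \in I_j} m_k$. Applying the lower bound of Lemma~\ref{lm.prox:t_lb} at the last used index of the cycle yields $s\,\beta^{\sum_{k \in I_j} m_k} = t_{\mathrm{last}} \ge c \triangleq \min\{s, \tfrac{2(1-\alpha)\beta}{L}\}$, hence $\sum_{k \in I_j} m_k \le \log_{1/\beta}(s/c)$, so the number of steplength changes in cycle $j$ is at most $1 + \log_{1/\beta}(s/c)$.

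The remaining work is arithmetic bookkeeping on the logarithm. From $c = \min\{s, \tfrac{2(1-\alpha)\beta}{L}\}$ one gets $\log_{1/\beta}(s/c) = \big[\log_{1/\beta}\!\big(\tfrac{sL}{2(1-\alpha)\beta}\big)\big]_+$, and since $\log_{1/\beta}(1/\beta) = 1$ this equals $\big[\log_{1/\beta}\!\big(\tfrac{sL}{2(1-\alpha)}\big) + 1\big]_+ \le \big[\log_{1/\beta}\!\big(\tfrac{sL}{2(1-\alpha)}\big)\big]_+ + 1$, using $[x+1]_+ \le [x]_+ + 1$. Thus the number of changes per cycle is at most $\big[\log_{1/\beta}\!\big(\tfrac{sL}{2(1-\alpha)}\big)\big]_+ + 2$; being a nonnegative integer, it is at most $\big\lceil \big[\log_{1/\beta}\!\big(\tfrac{sL}{2(1-\alpha)}\big)\big]_+ + 2\big\rceil$. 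Summing this bound over the $J(K)+1$ cycles, which partition $\{0,1,\ldots,K-1\}$, gives the asserted bound on $T(K)$.

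I expect the only subtle points to be handling the truncated final cycle — the telescoping identity and the lower bound of Lemma~\ref{lm.prox:t_lb} hold verbatim on any prefix of a cycle, so nothing breaks — and getting the order of the $[\cdot]_+$ and $\lceil\cdot\rceil$ operations right while absorbing the spurious factor $\beta$ that enters because the worst-case accepted step is $c = 2(1-\alpha)\beta/L$ rather than $2(1-\alpha)/L$; the rest is routine.
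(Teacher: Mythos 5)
Your proof is correct and follows essentially the same strategy as the paper's: within each cycle the accepted steplength is $s$ times a cumulative power of $\beta$, the lower bound $t_k \ge c$ from Lemma~\ref{lm.prox:t_lb} caps the total number of $\beta$-reductions at $\log_{1/\beta}(s/c)$, one extra unit accounts for the possible change at the cycle's first index, and the per-cycle bound is summed over the $J(K)+1$ cycles. Your telescoping/$m_k$-counting formalization is in fact a slightly cleaner rendering of the paper's ``maximum cardinality'' worst-case argument, and your handling of the $[\cdot]_+$ and $\lceil\cdot\rceil$ bookkeeping matches the stated constant.
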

\begin{proof}
 Consider the first $p_0$ iterations, i.e., the first cycle. We bound the number of times that $t_k$ changes ($t_k \neq t_{k-1}$) for $k \in \{1, \cdots, p_0-1\}$ as follows.
    By Lemma \ref{lm.prox:t_lb}, we know that $c\le {t_k} \le {t_{k-1}}$ for $k \in \{1, \cdots, p_0-1\}$.  Therefore, the set $\{k \in \{1, \cdots, p_0-1\}
\mid {t_{k} < t_{k-1}} \}$ assumes its maximum cardinality when ${t_{0}} = s$ and
${t_{k}} = s\beta^k$ for each $k \in \{1, \cdots, p-1\}$ until $s
\beta^{k_{\max}} \le \frac{2(1-\alpha) \beta}{L}$. For $k_{\max}$ defined as in \eqref{def: km} and $k \ge
k_{\max}$, if $k_{\max}$ does not exceed $p_0-1$, then ${t_{k}}$ stays unchanged since
the Armijo condition always holds. As a result, the maximum number of times that the steplength
$t_{k}$ changes is given by
$k_{\max}$, where
              \begin{align}\label{def: km}
                                & k_{\max} \triangleq \min\left\{\, k \in \{1, \cdots, p_0-1\} \biggm| s\beta^{k} \le \frac{2(1-\alpha)\beta}{L}\, \right\}                                \\
                                \notag
                  \implies\quad & k_{\max} \le \min\left\{\, p_0-1,  \left\lceil\left[\log_{1/\beta} \left(\frac{sL}{2(1-\alpha)} \right)\right]_{+} + 1 \right\rceil \, \right\}.
              \end{align}
              Without loss of generality, the result holds for the $j$-th cycle with $p_0$ replaced by $p_j$.  When we include the possible change at the first iteration in the cycle, 
              the number of times that $t_k$ changes in the $j$-th cycle is upper bounded by
              \begin{align*}
                  \min\left\{ \left\lceil\left[\log_{1/\beta} \left(\frac{sL}{2(1-\alpha)} \right)\right]_{+} + 2 \right\rceil, p_j \right\} \le \left\lceil\left[\log_{1/\beta} \left(\frac{sL}{2(1-\alpha)} \right)\right]_{+} + 2 \right\rceil.
              \end{align*}
Consequently, when running Algorithm~\ref{alg: sls pk prox} for $K$ iterations, there are $J(K)+1$ cycles, where the result follows.
\end{proof}    

\medskip

The next lemma provides some monotonicity bounds for the random residual function $\tilde{G}_t$ in $t$. This is particularly useful since the steplength changes during the scheme and such bounds are essential in carrying out the analysis. In addition, we derive a uniform bound on the norm $\| t G_t(x)\|$ for any $x \in \mbox{dom}(r)$ in terms of suitable problem parameters. 
\begin{lemma}\label{lm:bounds_G} \em Consider Algorithm~\ref{alg: sls pk prox}. Suppose that Assumption~\ref{ass:prox} holds and $k \in \N$ and $x \in \dom(r)$.    
\begin{enumerate}[label=(\roman*)]
    \item \label{lem3.i} For any $\eta_1 \ge \eta_2 > 0$, we have that 
    \begin{align} \label{rand-resid1}
        \left\|\, \tilde{G}^k_{\eta_2}(x)\, \right\| \, \ge \, \left\|\, \tilde{G}^k_{\eta_1}(x)\, \right\|
    \end{align}
    and 
    \begin{align}\label{rand-resid2}
        \eta_2\left\|\, \tilde{G}^k_{\eta_2}(x)\, \right\| \, \le \, \eta_1\left\|\, \tilde{G}^k_{\eta_1}(x)\, \right\|.
    \end{align}
    The analogs of \eqref{rand-resid1} and \eqref{rand-resid2} also hold for the true residuals $G_{\eta_1}$ and $G_{\eta_2}$, respectively.
    \item \label{lem3.ii}Suppose $\tilde{g}_k(x) = g(x) + \tilde{e}_k(x)$. For any $t > 0$,
    \begin{align*}
        - 2\|\tilde{G}^k_t(x) \|^2 & \le -\| \, G_t(x) \, \|^2 + 2 \| g(x) -  \tilde{g}_k(x)\|^2  .  \end{align*}
    \item \label{lem3.iii} For any $t > 0$, we have 
    \begin{align*}
        \|tG_t(x)\| \le t (B_g + L_r).
    \end{align*}
\item \label{lem3.iv} For any $\eta_{\min} \le \eta_1 \le \eta_{\max}$, and $\eta_{\min} \le \eta_2 \le \eta_{\max}$, where $\eta_{\min}$ and $\eta_{\max}$ are some positive constants, one has
\begin{align*}
    \left\| \, \prox_{\eta_1 r}(x - \eta_1 g(x)) - \prox_{\eta_2 r}(x - \eta_2 g(x)) \, \right\| \, \le \,  (\eta_{\max}-\eta_{\min})(L_r + 3B_g).
\end{align*}
\end{enumerate}
\end{lemma}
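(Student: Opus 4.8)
The plan is to prove the four items in order, using only the prox characterization of Lemma~\ref{lem.prox}, nonexpansiveness of $\prox_{tr}$, monotonicity of $\partial r$, and the already-established item~\ref{lem3.iii}; no deep machinery is needed. For part~\ref{lem3.i}, I would set $u_i \triangleq \prox_{\eta_i r}(x - \eta_i \tilde g_k(x))$, so that $\tilde G^k_{\eta_i}(x) = (x - u_i)/\eta_i$ and $u_i = x - \eta_i \tilde G^k_{\eta_i}(x)$. Lemma~\ref{lem.prox}(ii) gives $x - \eta_i \tilde g_k(x) - u_i \in \eta_i \partial r(u_i)$, i.e. $\tilde G^k_{\eta_i}(x) - \tilde g_k(x) \in \partial r(u_i)$. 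Monotonicity of $\partial r$ at $u_1,u_2$ eliminates the common term $\tilde g_k(x)$ and yields $\langle \tilde G^k_{\eta_1}(x) - \tilde G^k_{\eta_2}(x),\, u_1 - u_2 \rangle \ge 0$; substituting $u_i = x - \eta_i \tilde G^k_{\eta_i}(x)$ and writing $a = \tilde G^k_{\eta_1}(x)$, $b = \tilde G^k_{\eta_2}(x)$, this becomes $(\eta_1+\eta_2)\langle a,b\rangle \ge \eta_1\|a\|^2 + \eta_2\|b\|^2$. If $a$ or $b$ vanishes, then by the $\eta$-independent fixed-point characterization $-\tilde g_k(x) \in \partial r(x)$ both vanish and the claims are trivial; otherwise Cauchy--Schwarz gives $(\eta_1+\eta_2)\|a\|\|b\| \ge \eta_1\|a\|^2 + \eta_2\|b\|^2$, which upon dividing by $\|a\|^2$ and setting $\rho = \|b\|/\|a\|$ factors as $(\rho-1)(\eta_2\rho - \eta_1) \le 0$. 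Since $\eta_1 \ge \eta_2 > 0$, this forces $1 \le \rho \le \eta_1/\eta_2$, which is exactly \eqref{rand-resid1} ($\|b\| \ge \|a\|$) and \eqref{rand-resid2} ($\eta_2\|b\| \le \eta_1\|a\|$). The identical argument with $\tilde g_k$ replaced by $g$ gives the analog for $G_{\eta_1}, G_{\eta_2}$.

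For part~\ref{lem3.ii}, since $\prox_{tr}$ is nonexpansive, $\|tG_t(x) - t\tilde G^k_t(x)\| = \|\prox_{tr}(x - tg(x)) - \prox_{tr}(x - t\tilde g_k(x))\| \le t\|g(x) - \tilde g_k(x)\|$, hence $\|G_t(x) - \tilde G^k_t(x)\| \le \|g(x)-\tilde g_k(x)\| = \|\tilde e_k(x)\|$; the triangle inequality gives $\|G_t(x)\| \le \|\tilde G^k_t(x)\| + \|g(x)-\tilde g_k(x)\|$, and squaring with $(p+q)^2 \le 2p^2+2q^2$ and rearranging produces the stated bound. For part~\ref{lem3.iii}, with $u = \prox_{tr}(x - tg(x))$, Lemma~\ref{lem.prox}(ii) yields $\tfrac1t(x-u) - g(x) \in \partial r(u)$; this is a subgradient of the $L_r$-Lipschitz convex function $r$, hence has norm at most $L_r$, and $\|g(x)\| \le B_g$ by Assumption~\ref{ass:prox}(ii) since $x \in \dom(r)$, so $\tfrac1t\|x-u\| \le L_r + B_g$, i.e. $\|tG_t(x)\| \le t(B_g + L_r)$.

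For part~\ref{lem3.iv}, assume WLOG $\eta_1 \ge \eta_2$ and put $u_i = \prox_{\eta_i r}(x - \eta_i g(x))$. I would use the prox-reparametrization identity $\prox_{\eta_1 r}(z) = \prox_{\eta_2 r}\!\big(\tfrac{\eta_2}{\eta_1} z + (1 - \tfrac{\eta_2}{\eta_1})\prox_{\eta_1 r}(z)\big)$, which follows directly from Lemma~\ref{lem.prox}(ii), with $z = x - \eta_1 g(x)$; thus $u_1 = \prox_{\eta_2 r}\big(\tfrac{\eta_2}{\eta_1}(x - \eta_1 g(x)) + (1-\tfrac{\eta_2}{\eta_1})u_1\big)$, and nonexpansiveness of $\prox_{\eta_2 r}$ bounds $\|u_1 - u_2\|$ by the norm of the difference of the two arguments; after the $\eta_2 g(x)$ and $x$ terms cancel, that difference is $(1-\tfrac{\eta_2}{\eta_1})(u_1 - x)$, giving $\|u_1 - u_2\| \le \tfrac{\eta_1-\eta_2}{\eta_1}\|x - u_1\|$. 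Applying part~\ref{lem3.iii} to bound $\|x - u_1\| = \|\eta_1 G_{\eta_1}(x)\| \le \eta_1(B_g + L_r)$ gives $\|u_1 - u_2\| \le (\eta_1-\eta_2)(B_g + L_r) \le (\eta_{\max}-\eta_{\min})(L_r + B_g) \le (\eta_{\max}-\eta_{\min})(L_r + 3B_g)$. (Alternatively, the monotonicity argument of part~\ref{lem3.i} applied to the inclusions $\tfrac{x-u_i}{\eta_i} - g(x) \in \partial r(u_i)$ yields $\tfrac1{\eta_2}\|u_1-u_2\|^2 \le (\tfrac1{\eta_1}-\tfrac1{\eta_2})\langle x - u_1, u_1 - u_2\rangle$, and then Cauchy--Schwarz together with part~\ref{lem3.iii} gives the same conclusion.)

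The estimates in (ii)--(iv) are routine once nonexpansiveness and item~\ref{lem3.iii} are in hand; the only delicate point is (i), where both desired inequalities must be extracted with the correct orientation from the single monotonicity inequality $(\eta_1+\eta_2)\langle a,b\rangle \ge \eta_1\|a\|^2 + \eta_2\|b\|^2$ (and the degenerate case of a vanishing residual must be dispatched). I therefore expect part~\ref{lem3.i}, and specifically the quadratic-factoring step, to be the main obstacle; a secondary point is verifying the prox-reparametrization identity used in (iv).
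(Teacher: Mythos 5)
Your parts (i), (ii), and (iv) are sound. Part (i) is a correct from-scratch proof of what the paper simply imports from \cite[Theorem 10.9]{Beck2017}: the monotonicity-of-$\partial r$ argument, the degenerate case, and the factorization $(\rho-1)(\eta_2\rho-\eta_1)\le 0$ all check out. Part (ii) matches the paper's argument. Part (iv) takes a genuinely different route: the paper splits $\prox_{\eta_1 r}(x-\eta_1 g(x))-\prox_{\eta_2 r}(x-\eta_2 g(x))$ by the triangle inequality into a gradient-shift term and a scale-change term, bounding the latter by summing the two prox optimality conditions, which yields the constant $L_r+3B_g$; your reparametrization identity $\prox_{\eta_1 r}(z)=\prox_{\eta_2 r}\bigl(\tfrac{\eta_2}{\eta_1}z+(1-\tfrac{\eta_2}{\eta_1})\prox_{\eta_1 r}(z)\bigr)$ plus nonexpansiveness is shorter and in fact gives the sharper constant $L_r+B_g$, which implies the stated bound.

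The genuine gap is in part (iii). You claim the specific subgradient $\tfrac1t(x-u)-g(x)\in\partial r(u)$ has norm at most $L_r$ because $r$ is $L_r$-Lipschitz. That inference is valid only when $u$ is in the interior of $\dom(r)$ (or $\dom(r)=\R^n$). Assumption~\ref{ass:prox} only requires $r$ to be Lipschitz \emph{over its domain}, and the paper explicitly intends $r$ to cover the indicator of a closed convex set $X$ (which is $0$-Lipschitz on $\dom(r)=X$); at a boundary point $u$ the subdifferential contains the entire normal cone, so subgradients can have arbitrarily large norm. Concretely, with $n=1$, $r$ the indicator of $[0,\infty)$, $x=0$, $g(x)=1$, $t=1$, one gets $u=0$ and the subgradient $-1$, whose norm exceeds $L_r=0$. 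The conclusion of (iii) is still true, but it must be reached as the paper does: from $x-u=t\left(z+g(x)\right)$ with $z\in\partial r(u)$, write $\|x-u\|^2=t\,(x-u)^\top\left(g(x)+z\right)$, bound $z^\top(x-u)\le r(x)-r(u)\le L_r\|x-u\|$ via the subgradient inequality at $y=x$ (which uses Lipschitz continuity only on function \emph{values} of points in $\dom(r)$), and combine with $g(x)^\top(x-u)\le B_g\|x-u\|$. Since your part (iv) invokes part (iii), it inherits this dependence, but it is otherwise fine once (iii) is repaired.
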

\begin{proof} 
\ref{lem3.i} follows directly from \cite[Theorem 10.9]{Beck2017} and our definition of $\tilde{G}_t^k(x)$ in \eqref{eq.Gtilde.prox}. Note that $\eta_i$ in our definition of $\tilde G_{\eta_i}^k(x)$ is chosen as the reciprocal of $L_i$ in gradient mapping definition of \cite{{Beck2017}}. Next, we prove \ref{lem3.ii}. For any $x\in \R^{n}$ and $k\in \N$, we have that
    \begin{align*}
        \| \, G_t(x) \, \|^2 & = \| G_t(x) - \tilde{G}_t^k (x) + \tilde{G}_t^k(x) \|^2  \le  2\| G_t(x) - \tilde{G}_t^k (x)\|^2 + 2\|\tilde{G}_t^k(x) \|^2 \\  & \le  2\| g(x) -  \tilde{g}_k(x)\|^2 + 2\|\tilde{G}_t^k(x) \|^2,
    \end{align*}
    where the second inequality follows from the nonexpansivity of $\prox_{tr}(\bullet)$.\\

    \noindent We now prove \ref{lem3.iii}. By denoting $\prox_{tr}(x - tg(x))$ by $x(t)$, by the definition of $\prox_{tr}$, one has $x(t)$ minimizes $r(y) + \frac{1}{2t}\|x - tg(x) - y\|^2$ in $y$. Thus, there exists a subgradient $z(x(t))\, \in \, \partial r(x(t))$ for a minimizer $x(t)$ such that 
\begin{align}
    z(x(t)) + \frac{1}{t}(x(t) - x + tg(x)) = 0. \label{eq.lem3iii1}
\end{align}
Furthermore, since $r$ is convex, one has for any $x \, \in \, \mbox{dom}(r)$, 
\begin{align}
    z(x(t))^\top (x - x(t)) \le r(x) - r(x(t)). \label{eq.lem3iii2}
\end{align}
By definition of $G_t(x)$, one has
\begin{align*}
    & \|tG_t(x)\|^2 = \|x - x(t)\|^2 \overset{\eqref{eq.lem3iii1}}{=} t(x - x(t))^\top (g(x) + z(x(t))) = t g(x)^\top (x - x(t)) + t z(x(t))^\top (x - x(t)) \\
    &\overset{\eqref{eq.lem3iii2}}{\le} t g(x)^\top (x - x(t))+ t (r(x) - r(x(t)) \overset{\mbox{\tiny Ass.~\ref{ass:prox}}}{\le} t B_g \|x - x(t)\| + t L_r \|x - x(t)\| = t^2(B_g + L_r) \left\|\, G_t(x)\, \right\|.
\end{align*}
We now prove \ref{lem3.iv}. Without loss of generality, we assume $\eta_1  \, \ge \,  \eta_2$. {By adding and subtracting terms, invoking the triangle inequality, and leveraging non-expansivity of $\prox_{\eta r}(\bullet)$, we observe that $\|\prox_{\eta_1r}(x - \eta_1 g(x)) - \prox_{\eta_2r}(x - \eta_2g(x))\|$ may be bounded as follows.} 
\begin{alignb}
    &\quad \left\|\, \prox_{\eta_1r}(x - \eta_1 g(x)) - \prox_{\eta_2r}(x - \eta_2g(x)) \, \right\| \\
    &= \left\|\, \prox_{\eta_1r}(x - \eta_1 g(x)) - \prox_{\eta_1r}(x - \eta_2g(x)) + \prox_{\eta_1r}(x - \eta_2g(x)) -  \prox_{\eta_2r}(x - \eta_2g(x))\, \right\|\\
    &\le \|\, (\eta_1 - \eta_2)g(x)\, \|+\left\|\, \prox_{\eta_1r}(x - \eta_2g(x)) -  \prox_{\eta_2r}(x - \eta_2g(x))\, \right\|.\label{eq.lem5.itemiv proof}
\end{alignb}
Consider the second term in the final inequality. By defining $y_1$ and $y_2$ as  $y_1 \triangleq \prox_{\eta_1r}(x-\eta_2g(x))$ and $y_2 \triangleq \prox_{\eta_2r}(x-\eta_2g(x))$, respectively, and utilizing the optimality conditions of convex optimization problem associated with the proximal operator, one has
\begin{align*}
    0 \, \in \,  \partial r(y_1) + \frac{1}{\eta_1} (y_1-(x-\eta_2g(x))) \ \text{ and } \  0 \, \in \, \partial r(y_2) + \frac{1}{\eta_2} (y_2-(x-\eta_2g(x))),
\end{align*}
which, together with convexity of $r$, implies that
\begin{align*}
    \frac{1}{\eta_1}\left((x-\eta_2g(x)) - y_1\right)^\top (y_2 - y_1) &\le r(y_2) - r(y_1), \\
\text{and}\quad    \frac{1}{\eta_2}\left((x-\eta_2g(x)) - y_2\right)^\top (y_1 - y_2) &\le r(y_1) - r(y_2).
\end{align*}
Summing up the two inequalities gives us the following sequence of equivalence statements.
\begin{align*}
  & \frac{1}{\eta_1}\left((x-\eta_2g(x)) - y_1\right)^\top (y_2 - y_1) +   \frac{1}{\eta_2}\left((x-\eta_2g(x)) - y_2\right)^\top (y_1 - y_2)  \le 0 \\
  \iff \quad & \frac{1}{\eta_1}\left\|\, y_1 - y_2\, \right\|^2 + \left( \frac{1}{\eta_2} - \frac{1}{\eta_1}\right) (y_1 - y_2)^\top ((x-\eta_2g(x)) - y_2) \le 0 \\
  \iff \quad & \quad \quad \quad \quad \|y_1 - y_2\|^2 \le \left(\, \frac{\eta_1}{\eta_2} - 1\, \right) (y_1 - y_2)^\top (y_2-(x-\eta_2g(x))) \\
  & \quad \quad \quad \quad \quad \quad \quad \quad \ \ \le \left(\, \frac{\eta_1}{\eta_2} - 1\, \right) \left\|\, y_1 - y_2 \, \right\| \left\|\, y_2-(x-\eta_2g(x))\, \right\|.
  \end{align*}
  Therefore, we have that $\|y_1 - y_2\| \le \left(\, \frac{\eta_1}{\eta_2} - 1\, \right) \left\|\, y_2-(x-\eta_2g(x))\, \right\|.$
  By invoking the definitions of $y_1$ and $y_2$, we obtain the following bound.
\begin{align}
\notag
    \|\prox_{\eta_1r}(x - \eta_2g(x)) -  \prox_{\eta_2r}(x - \eta_2g(x))\| &\le \frac{\eta_1 - \eta_2}{\eta_2} \| \prox_{\eta_2r}(x - \eta_2g(x)) - (x - \eta_2 g(x))  \| \\
    \notag
    & = \frac{\eta_1 - \eta_2}{\eta_2} \| -\eta_2G_{\eta_2}(x) + \eta_2 g(x))  \|\\
    \label{ineq: nn8}
    & \le (\eta_1 - \eta_2) (2B_g + L_r),
\end{align}
where we use the result in \ref{lem3.iii} and $\|g(x)\| \le B_g$ to get the last inequality. The inequality \eqref{eq.lem5.itemiv proof} further can be bounded by
\begin{align}
\notag
    &\quad \left\|\, \prox_{\eta_1r}(x - \eta_1 g(x)) - \prox_{\eta_2r}(x - \eta_2g(x))\, \right\| \\
    \notag
    &\le \left\|\, (\eta_1 - \eta_2)g(x)\, \right\|+\left\|\, \prox_{\eta_1r}(x - \eta_2g(x)) -  \prox_{\eta_2r}(x - \eta_2g(x))\, \right\| \\
    \notag
    & \le (\eta_1 - \eta_2) (L_r + 3B_g) \\
     \notag
    & \le (\eta_{\max} - \eta_{\min}) (L_r + 3B_g),
\end{align}
where the second inequality is from \eqref{ineq: nn8} and $\| g(x) \| \le B_g$.
\end{proof}

\medskip
We introduce an auxiliary iterate, denoted by $\bar{x}_{k+1}$ and defined as 
\begin{align*}
    \bar x_{k+1} \, \triangleq \, \prox_{t_{k-1}r}(x_k - t_{k-1}g(x_k))\quad \text{for all}\quad k \in \N \quad \text{and}\quad t_{-1} = s.
\end{align*}
It may be observed that $\bar x_{k+1}$ is \emph{independent of} the
samples generated at iteration $k$ and is
$\mathcal{F}_k$-measurable.  Therefore, the sequence
$\left\{\bar{x}_{k+1}\right\}$ is adapted to $\left\{\sF_k\right\}$.  This sequence helps us to establish the following proposition, which derives a bound on the expected difference between $\phi(x_{k+1})$ and $\tilde{\phi}_k(x_{k+1})$ for any $k$.
\begin{proposition}\label{lm: errbd prox}\em
    Consider the sequences generated by Algorithm~\ref{alg: sls pk prox} and suppose that Assumptions~\ref{ass:prox}, \ref{ass: F}, and \ref{ass: var} hold. Then the following holds for any $k \in \N$. 
    \begin{align}
        \E \left[\, \phi(x_{k+1}) - \tilde{\phi}_k(x_{k+1})\, \right]
         & \, \le \,  \left(\tfrac{1}{2L} + 3Ls^2\right)\frac{\sigma^2}{N_k} + 3L(s-c)^2 (L_r+3B_g)^2 \bP\{t_{k} \neq t_{k-1}\}. \label{prop1:ineq}
    \end{align}
\end{proposition}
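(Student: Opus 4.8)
The plan is to exploit the cancellation $\phi(x_{k+1})-\tilde\phi_k(x_{k+1}) = f(x_{k+1}) - \tf_k(x_{k+1})$, and then to correct for the fact that $x_{k+1}=x_k(t_k)$ depends on the iteration-$k$ samples both through $\tg_k(x_k)$ in the prox argument and through the accepted stepsize $t_k$ (selected by a linesearch that reads $\tf_k$), so that $\E[f(x_{k+1})-\tf_k(x_{k+1})]$ is \emph{not} zero. First I would compare $x_{k+1}$ against the $\sF_k$-measurable auxiliary iterate $\bar x_{k+1}=\prox_{t_{k-1}r}(x_k-t_{k-1}g(x_k))$ and split
\[
f(x_{k+1}) - \tf_k(x_{k+1}) = \Bigl[\bigl(f(x_{k+1})-f(\bar x_{k+1})\bigr) - \bigl(\tf_k(x_{k+1})-\tf_k(\bar x_{k+1})\bigr)\Bigr] + \bigl[f(\bar x_{k+1}) - \tf_k(\bar x_{k+1})\bigr].
\]
Conditioning on $\sF_k$, the point $\bar x_{k+1}$ is deterministic and lies in $\dom(r)$, and the iteration-$k$ samples are fresh; hence $\E[\tf_k(\bar x_{k+1})\mid\sF_k] = f(\bar x_{k+1})$ by Assumption~\ref{ass: var}, so the last bracket has zero expectation and only the first bracket remains to be bounded.

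For the first bracket I would add the descent inequality \eqref{rmk: lipschitz} for $f$ (at $\bar x_{k+1}$, in the direction of $x_{k+1}$) to the two-sided smoothness estimate \eqref{rmk: lipschitz} for $\tf_k$ (which is $L$-smooth, being an average of the $L$-smooth $F(\bullet,\xi_{k,i})$ from Assumption~\ref{ass: F}), evaluated at the same pair of points; since $x_k,\bar x_{k+1},x_{k+1}\in\dom(r)$ and $\dom(r)$ is convex, this is legitimate and yields
\[
\bigl(f(x_{k+1})-f(\bar x_{k+1})\bigr) - \bigl(\tf_k(x_{k+1})-\tf_k(\bar x_{k+1})\bigr) \;\le\; \bigl(g(\bar x_{k+1}) - \tg_k(\bar x_{k+1})\bigr)^\top (x_{k+1}-\bar x_{k+1}) + L\,\|x_{k+1}-\bar x_{k+1}\|^2.
\]
Next I would bound $\|x_{k+1}-\bar x_{k+1}\|$ by inserting $\prox_{t_kr}(x_k-t_kg(x_k))$ and applying the triangle inequality, which isolates a gradient-noise term and a stepsize-change term: by nonexpansiveness of $\prox_{t_kr}$ the first is at most $t_k\|\tg_k(x_k)-g(x_k)\|\le s\|\tg_k(x_k)-g(x_k)\|$, and by Lemma~\ref{lm:bounds_G}(iv) together with $c\le t_{k-1},t_k\le s$ (Lemma~\ref{lm.prox:t_lb}) the second is at most $(s-c)(L_r+3B_g)\,{\bf 1}_{\{t_k\neq t_{k-1}\}}$ — it vanishes identically when $t_k=t_{k-1}$.

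To finish, I would substitute this into the previous display, apply Cauchy--Schwarz to the inner product and then Young's inequality $uv\le\tfrac1{4L}u^2+Lv^2$ separately to the products of $\|g(\bar x_{k+1})-\tg_k(\bar x_{k+1})\|$ with each of the two terms, use $(a+b)^2\le 2a^2+2b^2$ and ${\bf 1}^2={\bf 1}$ to get deterministic coefficients, and then take total expectations: $\E[\|\tg_k(x_k)-g(x_k)\|^2]\le\sigma^2/N_k$ by \eqref{eq: sample var}, $\E[\|\tg_k(\bar x_{k+1})-g(\bar x_{k+1})\|^2]\le\sigma^2/N_k$ by the same conditional variance bound applied at the $\sF_k$-measurable point $\bar x_{k+1}\in\dom(r)$, and $\E[{\bf 1}_{\{t_k\neq t_{k-1}\}}]=\bP\{t_k\neq t_{k-1}\}$. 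Collecting the constants produces exactly $\bigl(\tfrac1{2L}+3Ls^2\bigr)\tfrac{\sigma^2}{N_k}+3L(s-c)^2(L_r+3B_g)^2\,\bP\{t_k\neq t_{k-1}\}$.

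The main obstacle is the measurability bookkeeping around $\bar x_{k+1}$: one has to verify that replacing $(t_k,\tg_k)$ by $(t_{k-1},g)$ simultaneously (a) produces an $\sF_k$-measurable point at which $\tf_k$ is conditionally unbiased and the variance bound of Assumption~\ref{ass: var} applies, and (b) changes the iterate by an amount attributable \emph{entirely} to gradient noise (controlled by nonexpansiveness) and to the event $\{t_k\neq t_{k-1}\}$ (controlled by Lemma~\ref{lm:bounds_G}(iv)), with no leftover deterministic term. A minor point is that the smoothness estimate must be used for $\tf_k$, not only $f$, and the Young parameter ($=\tfrac1{2L}$) must be chosen so the advertised constants emerge without slack.
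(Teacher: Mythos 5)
Your proposal is correct and follows essentially the same route as the paper: the same decomposition around the $\sF_k$-measurable auxiliary iterate $\bar x_{k+1}$, the same pair of $L$-smoothness estimates for $f$ and $\tf_k$, the same triangle-inequality split of $\|x_{k+1}-\bar x_{k+1}\|$ into a gradient-noise part (nonexpansiveness) and a stepsize-change part (Lemma~\ref{lm:bounds_G}(iv) on the event $\{t_k\neq t_{k-1}\}$), and the same conditional unbiasedness/variance arguments at $\bar x_{k+1}$. The only cosmetic difference is that you apply Young's inequality to the two pieces of $\|x_{k+1}-\bar x_{k+1}\|$ separately with parameter $\tfrac{1}{4L}$, whereas the paper applies it once with parameter $\tfrac{1}{2L}$ and then uses $(a+b)^2\le 2a^2+2b^2$; both yield the identical constants.
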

\begin{proof}
    We begin by expressing the term on the left as the sum of the following terms
    \begin{align}
         \E\left[\, \phi(x_{k+1}) - \tilde{\phi}_k(x_{k+1})\, \right] & = \E\left[f(x_{k+1}) - \tf_k(x_{k+1})\right] \notag                                                                                               \\
         \notag & = \E\left[f(x_{k+1}) - f(\bar x_{k+1}) + \tf_k(\bar x_{k+1}) - \tf_k(x_{k+1}) \right. \\
          & \left. ~~~+ f(\bar x_{k+1}) - \tf_k(\bar x_{k+1})\right]                    \label{ineq: nn1 prox}
    \end{align}
    Note that by Lipschitz continuity of $g(\bullet)$ and
    $\tg_k(\bullet)$ and Remark \ref{rmk: 1}, we immediately have that
    \begin{align}
        f(x_{k+1}) - f(\bar x_{k+1}) & \le g(\bar x_{k+1})^\top (x_{k+1} - \bar x_{k+1}) + \frac{L}{2} \| x_{k+1} - \bar x_{k+1} \|^2 
        \label{ineq: nn2 prox}
    \end{align}
    and
    \begin{align}
        \tf_k(\bar x_{k+1}) - \tf_k(x_{k+1}) & \le \tg_k(\bar x_{k+1})^\top (\bar x_{k+1} - x_{k+1}) + \frac{L}{2} \| x_{k+1} - \bar x_{k+1} \|^2. 
        \label{ineq: nn3 prox}
    \end{align}
    Therefore, from \eqref{ineq: nn1 prox} and by adding \eqref{ineq: nn2 prox}-\eqref{ineq: nn3 prox}, and using the Young's inequality $( g(\bar x_{k+1}) - \tg_k(\bar x_{k+1}) )^\top ( x_{k+1} -  \bar x_{k+1} ) \, \le \,  \frac{1}{2L}\|g(\bar x_{k+1}) - \tg_k(\bar x_{k+1})\|^2 + \frac{L}{2}\|x_{k+1} -  \bar x_{k+1}\|^2$, we may derive the following bound. 
    \begin{align}
    \notag
        \E\left[\, f(x_{k+1}) - \tf_k(x_{k+1})\, \right] & \le \E\left[\, ( g(\bar x_{k+1}) - \tg_k(\bar x_{k+1}) )^\top ( x_{k+1} -  \bar x_{k+1} ) \, \right] \\
        \notag
        & ~~~+ L \E\left[ \| x_{k+1} -  \bar x_{k+1}  \|^2 \,\right] + \E\left[ f(\bar x_{k+1}) - \tf_k(\bar x_{k+1}) \, \right] \\
        \notag
        & \le \tfrac{1}{2L} \E\left[ \| g(\bar x_{k+1}) - \tg_k (\bar x_{k+1}) \|^2 \right] + \tfrac{3L}{2} \E\left[ \| x_{k+1} -  \bar x_{k+1}  \|^2 \right] \\
        \label{ineq: nn4 prox}
        & ~~~+ \E\left[ f(\bar x_{k+1}) - \tf_k(\bar x_{k+1}) \right].
    \end{align}
{
The second term in \eqref{ineq: nn4 prox} may be bounded as follows.
    \begin{alignb}
\quad & \E\left[ \, \| \bar x_{k+1} - x_{k+1} \|^2 \,\right]   \\
         & = \E\big[\, \| \prox_{t_{k-1}r}\left(x_k - t_{k-1} g(x_k)\right) - \prox_{t_{k}r}\left(x_k - t_{k} g(x_k)\right)    \\ 
         & \quad + \prox_{t_{k}r}\left(x_k - t_{k} g(x_k)\right)  - \prox_{t_k r}\left(x_k - t_k \tg_k(x_k) \right) \, \|^2\, \big]                                \\
         & \le \E\big[\, 2\| \prox_{t_{k-1}r}\left(x_k - t_{k-1} g(x_k)\right) - \prox_{t_{k}r}\left(x_k - t_{k} g(x_k)\right)  \|^2  \big]\\ 
         & \quad + \E\big[\, 2\|\prox_{t_{k}r}\left(x_k - t_{k} g(x_k)\right)  - \prox_{t_k r}\left(x_k - t_k \tg_k(x_k) \right) \, \|^2\, \big]                                \\
         & \le \E\big[\, 2\| \prox_{t_{k-1}r}\left(x_k - t_{k-1} g(x_k)\right) - \prox_{t_{k}r}\left(x_k - t_{k} g(x_k)\right)  \|^2 \big| t_k \neq t_{k-1} \big] \mathbb{P}\{t_k \neq t_{k-1}\} \\ 
         & \quad + \E\big[\, 2\| \prox_{t_{k-1}r}\left(x_k - t_{k-1} g(x_k)\right) - \prox_{t_{k}r}\left(x_k - t_{k} g(x_k)\right)  \|^2 \big| t_k = t_{k-1} \big] \mathbb{P}\{t_k = t_{k-1}\} \\
         & \quad + \E\big[\, 2\|\prox_{t_{k}r}\left(x_k - t_{k} g(x_k)\right)  - \prox_{t_k r}\left(x_k - t_k \tg_k(x_k) \right) \, \|^2\, \big]                                \\
         & \le 2(s-c)^2(L_r + 3B_g)^2\mathbb{P}\{t_k \neq t_{k-1}\} + \E\big[\, 2 t_k^2 \| g(x_k) - \tg_k(x_k) \|^2\, \big]   \\
         & \le 2(s-c)^2(L_r + 3B_g)^2\mathbb{P}\{t_k \neq t_{k-1}\} + 2s^2\E\big\| g(x_k) - \tg_k(x_k) \|^2\, \big]   
         \label{eq.xbar x difference}
    \end{alignb}
where the penultimate inequality uses Lemma~\ref{lem.prox}\ref{lem3.iv}, the bound $t_{k-1} \in [s,c]$ and $t_{k} \in [s,c]$, and non-expansiveness of proximal operator. 
By combining \eqref{ineq: nn4 prox} and \eqref{eq.xbar x difference}, one has
\begin{align*}
     \E\left[\, \phi(x_{k+1}) - \tilde{\phi}_k(x_{k+1})\, \right] 
    & \le  \tfrac{1}{2L} \E\left[ \| g(\bar x_{k+1}) - \tg_k (\bar x_{k+1}) \|^2 \right] + 3Ls^2\E\big[\| g(x_k) - \tg_k(x_k) \|^2\, \big]  \\
    & ~~~+ 3L(s-c)^2 (L_r+3B_g)^2 \bP\{t_{k} \neq t_{k-1}\} + \E\left[ f(\bar x_{k+1}) - \tf_k(\bar x_{k+1}) \right] \\
    & \le  \left(\frac{1}{2L} + 3Ls^2\right)\tfrac{\sigma^2}{N_k} + 3L(s-c)^2 (L_r+3B_g)^2 \bP\{t_{k} \neq t_{k-1}\}
\end{align*}
where the second inequality follows from
        \begin{align*}
            \E\left[ \, \| g(\bar x_{k+1}) - \tg_k(\bar x_{k+1}) \|^2 \, \right] & = \E\left[ \, \E\left[ \, \| g(\bar x_{k+1}) - \tg_k(\bar x_{k+1}) \|^2 \mid \sF_k \, \right]\, \right] \le \tfrac{\sigma^2}{N_k}, \\
             \E\left[ \, \| g(x_k) - \tg_k(x_k) \|^2 \, \right] & = \E\left[ \, \E\left[ \, \| g(x_k) - \tg_k( x_k ) \|^2 \mid \sF_k \, \right]\, \right] \, \le \, \tfrac{\sigma^2}{N_k},\\
            \mbox{ and }  \E\left[ \, f(\bar x_{k+1}) - \tf_k(\bar x_{k+1}) \, \right] & = \E\left[\, \E\left[ \, f(\bar x_{k+1}) - \tf_k(\bar x_{k+1}) \mid \sF_k \, \right]\, \right] = 0,
        \end{align*}
        a consequence of observing that $x_k$ and $\bar x_{k+1}$ are $\mathcal{F}_{k}$-measurable and Assumptions~\ref{ass: var}. The conclusion follows.
}
\end{proof}

The following proposition combines the results obtained from prior results to derive a lower bound on  the expected reduction in function value across any two consecutive iterations.
\begin{proposition}\label{prop: sufficient decrease prox}\em
    Consider the sequences generated by Algorithm~\ref{alg: sls pk prox} and suppose that Assumptions~\ref{ass:prox}, \ref{ass: F}, and \ref{ass: var} hold. For any $k \in \N$, the following holds.
    \begin{equation} \label{eq.sufficient decrease prox}
        \begin{split}
        \E\left[\, \phi(x_{k+1}) \, \right]
     & \le \E\left[\, \phi(x_k)\, \right] - \frac{\alpha c}{2} \E\left[ \,\|G_{s}(x_k)\|^2 \, \right] + \left({\frac{1}{2L} + 3Ls^2} + \alpha c\right)\frac{\sigma^2}{N_k} \\
     & ~~~ + 3L(s-c)^2 (L_r+3B_g)^2\bP\{t_{k} \neq t_{k-1}\}.    
        \end{split}
    \end{equation}
\end{proposition}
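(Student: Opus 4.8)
The plan is to assemble three ingredients: the residual form \eqref{eq.prox.armijo.G} of the stochastic Armijo condition satisfied by the accepted steplength $t_k$, the error bound of Proposition~\ref{lm: errbd prox}, and the residual comparison/monotonicity estimates of Lemma~\ref{lm:bounds_G} together with the steplength bounds of Lemma~\ref{lm.prox:t_lb}. First I would write, using $x_{k+1} = x_k(t_k)$,
\[
\phi(x_{k+1}) \;=\; \bigl(\phi(x_{k+1}) - \tilde{\phi}_k(x_{k+1})\bigr) + \bigl(\tilde{\phi}_k(x_{k+1}) - \tilde{\phi}_k(x_k)\bigr) + \tilde{\phi}_k(x_k),
\]
bound the middle bracket by $-\alpha t_k \|\tilde{G}^k_{t_k}(x_k)\|^2$ via \eqref{eq.prox.armijo.G}, and take expectations. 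Since $x_k$ is $\sF_k$-measurable and $\tf_k(x_k)$ is conditionally unbiased by \eqref{eq: sample var}, the tower property gives $\E[\tilde{\phi}_k(x_k)] = \E[\phi(x_k)]$, so
\[
\E[\phi(x_{k+1})] \;\le\; \E\bigl[\phi(x_{k+1}) - \tilde{\phi}_k(x_{k+1})\bigr] + \E[\phi(x_k)] - \alpha\,\E\bigl[t_k \|\tilde{G}^k_{t_k}(x_k)\|^2\bigr].
\]

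Next I would lower bound the last term. Because $t_k \le s$, the monotonicity bound \eqref{rand-resid1} of Lemma~\ref{lm:bounds_G}\ref{lem3.i} gives $\|\tilde{G}^k_{t_k}(x_k)\| \ge \|\tilde{G}^k_s(x_k)\|$, while Lemma~\ref{lm.prox:t_lb} gives $t_k \ge c$; hence $t_k\|\tilde{G}^k_{t_k}(x_k)\|^2 \ge c\,\|\tilde{G}^k_s(x_k)\|^2$. Applying Lemma~\ref{lm:bounds_G}\ref{lem3.ii} at $t = s$ yields $\|\tilde{G}^k_s(x_k)\|^2 \ge \tfrac12\|G_s(x_k)\|^2 - \|g(x_k)-\tg_k(x_k)\|^2$, and taking expectations, using $\E[\|g(x_k)-\tg_k(x_k)\|^2]\le \sigma^2/N_k$ from \eqref{eq: sample var} and the tower property, gives
\[
-\alpha\,\E\bigl[t_k \|\tilde{G}^k_{t_k}(x_k)\|^2\bigr] \;\le\; -\frac{\alpha c}{2}\,\E\bigl[\|G_s(x_k)\|^2\bigr] + \alpha c\,\frac{\sigma^2}{N_k}.
\]

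Finally I would substitute the bound of Proposition~\ref{lm: errbd prox} for $\E[\phi(x_{k+1}) - \tilde{\phi}_k(x_{k+1})]$ and add the two estimates: the variance coefficients $\tfrac{1}{2L}+3Ls^2$ (from Proposition~\ref{lm: errbd prox}) and $\alpha c$ combine into $\tfrac{1}{2L}+3Ls^2+\alpha c$, and the term $3L(s-c)^2(L_r+3B_g)^2\,\bP\{t_k\neq t_{k-1}\}$ carries over unchanged, producing \eqref{eq.sufficient decrease prox}. Everything here is a routine combination of the preceding lemmas; the only point requiring care is the conditional-expectation bookkeeping --- justifying $\E[\tilde{\phi}_k(x_k)] = \E[\phi(x_k)]$ and $\E[\|g(x_k)-\tg_k(x_k)\|^2]\le \sigma^2/N_k$ through $\sF_k$-measurability of $x_k$, and checking that the relevant expectations are finite (which follows inductively from $\phi(x_0)<\infty$, $\phi \ge \phi_{\inf}$ on $\dom(r)$, and the uniform bound in Lemma~\ref{lm:bounds_G}\ref{lem3.iii}).
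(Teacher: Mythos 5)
Your proposal is correct and follows essentially the same route as the paper's proof: take expectations of the Armijo condition in its residual form, use the tower property with $\sF_k$-measurability of $x_k$ to identify $\E[\tilde\phi_k(x_k)]$ with $\E[\phi(x_k)]$, combine the steplength bounds $c \le t_k \le s$ with Lemma~\ref{lm:bounds_G}\ref{lem3.i}--\ref{lem3.ii} to pass from $\|\tilde G^k_{t_k}(x_k)\|^2$ to $\tfrac12\|G_s(x_k)\|^2$ minus a variance term, and then add the bound from Proposition~\ref{lm: errbd prox}. The only cosmetic difference is that you write the three-term decomposition of $\phi(x_{k+1})$ explicitly up front, whereas the paper arrives at the same rearrangement after first chaining the inequalities from the Armijo condition.
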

\begin{proof}
    Taking unconditional expectations on both sides of \eqref{eq.prox.armijo} which is equivalent to \eqref{eq.prox.armijo.G}, we obtain
\begin{align*}
     \E\left[\, \tilde{\phi}_k(x_{k+1}) \, \right] & \le \E\left[\, \tilde{\phi}_k(x_k)\, \right] - \alpha \E\left[ \, t_k \|\tilde G_{t_k}^k(x_k)\|^2 \, \right] \\
     & =  \E\left[\,\E \left[ \, \tilde{\phi}(x_k)\, \mid \, \sF_k \, \right] \,\right] - \alpha \E\left[ \, t_k \|\tilde G_{t_k}^k(x_k)\|^2 \, \right] \\
     & =  \E\left[\, \phi(x_k)\, \right] - \alpha \E\left[ \, t_k \|\tilde G_{t_k}^k(x_k)\|^2 \, \right] \\
     & \le  \E\left[\, \phi(x_k)\, \right] - \alpha c \E\left[ \,\|\tilde G_{t_k}^k(x_k)\|^2 \, \right] \\
     & \le  \E\left[\, \phi(x_k)\, \right] - \tfrac{\alpha c}{2} \E\left[ \,\|G_{s}(x_k)\|^2 \, \right] + \alpha c \E\left[\, \|g(x_k) - \tg_k(x_k)\|^2 \, \right]  \\
     & \le  \E\left[\, \phi(x_k)\, \right] - \tfrac{\alpha c}{2} \E\left[ \,\|G_{s}(x_k)\|^2 \, \right] + \frac{\alpha c \sigma^2}{N_k},    
\end{align*}
where the first equality is due to $x_k$ being $\mathcal{F}_{k}$-measurable and the unbiasedness assumption, the second inequality holds by $t_k \ge c$, the third inequality follows from Lemma~\ref{lm:bounds_G}\ref{lem3.i}, $t_k \le s$, and Lemma~\ref{lm:bounds_G}\ref{lem3.ii}, while the last inequality is a consequence of Assumption~\ref{ass: var}. Then by Proposition~\ref{lm: errbd prox} and through a rearrangement of terms, we obtain the following sequence of inequalities.
\begin{align*}
     \E\left[\, \phi(x_{k+1}) \, \right] & \le  \E\left[\, \left(\, \phi(x_{k+1}) \,  - \,  \tilde{\phi}_k(x_{k+1}) \, \right) \, \right] + \E\left[\, \phi(x_k)\, \right] - \frac{\alpha c}{2} \E\left[ \,\|G_{s}(x_k)\|^2 \, \right] + \frac{\alpha c \sigma^2}{N_k} \\
     & \overset{ \mbox{ \tiny \eqref{prop1:ineq}}}{\le} \E\left[\, \phi(x_k)\, \right] - \tfrac{\alpha c}{2} \E\left[ \,\|G_{s}(x_k)\|^2 \, \right] + \left(\tfrac{1}{2L} + 3Ls^2 + \alpha c\right)\frac{\sigma^2}{N_k} \\
     &  ~~~+ 3L(s-c)^2 (L_r+3B_g)^2 \bP\left\{t_{k} \neq t_{k-1}\right\}.
\end{align*}
\end{proof}
By combining the prior results, we are now in a position to derive a rate guarantee of the expected residual of a randomly selected iterate. 
\begin{proposition}\label{proposition:main prox}\em
    Consider the sequences generated by Algorithm~\ref{alg: sls pk prox} and suppose that Assumptions~\ref{ass:prox}, \ref{ass: F}, and \ref{ass: var} hold, $K \ge 1$, and $R_K$ is a random variable taking values in $\left\{\, 0, 1, \cdots, K-1 \,\right\}$ with a probability mass function given by $\mathbb{P}\left[ \, R_K = j\, \right] = 1/K$. Then the following holds for any $K\ge 1$.
    \begin{alignb} 
         \E\left[\, \left\|\, G_{s}(x_{R_K}) \,\right\|^2\, \right] &\, \le \, \frac{2( \phi(x_0) - \phi_{\inf} )}{\alpha c K} +   \frac{ \frac{1}{L} + 6Ls^2+ 2\alpha c}{\alpha c K} \sum_{k=0}^{K-1} \frac{\sigma^2}{N_k} \\
         & ~~~+  \frac{6L(s-c)^2 (L_r+3B_g)^2(J(K)+1) \left\lceil\left[\log_{1/\beta} \left(\frac{sL}{2(1-\alpha)} \right)\right]_{+} + 2 \right\rceil}{\alpha c K}  \label{eq.rate bound}
    \end{alignb}
\end{proposition}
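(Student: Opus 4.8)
The plan is to telescope the per‑iteration descent estimate of Proposition~\ref{prop: sufficient decrease prox} over the first $K$ iterations and then replace the cumulative stepsize‑change probability with the deterministic bound on $T(K)$ furnished by Lemma~\ref{lem.T bound prox}. First I would sum inequality~\eqref{eq.sufficient decrease prox} over $k = 0, 1, \dots, K-1$; the $\E[\phi(x_k)]$ terms telescope, leaving
\begin{align*}
\frac{\alpha c}{2}\sum_{k=0}^{K-1}\E\big[\|G_{s}(x_k)\|^2\big]
&\le \phi(x_0) - \E[\phi(x_K)]
+ \Big(\tfrac{1}{2L} + 3Ls^2 + \alpha c\Big)\sum_{k=0}^{K-1}\frac{\sigma^2}{N_k} \\
&\quad + 3L(s-c)^2(L_r+3B_g)^2 \sum_{k=0}^{K-1}\bP\{t_k \neq t_{k-1}\}.
\end{align*}
Since every iterate $x_{k+1}=\prox_{t_k r}(\,\cdot\,)$ lies in $\dom(r)$, Assumption~\ref{ass:prox}(iii) gives $\E[\phi(x_K)] \ge \phi_{\inf}$, so the first two terms on the right are bounded above by $\phi(x_0)-\phi_{\inf}$.

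For the stepsize‑change term I would observe that $\sum_{k=0}^{K-1}\bP\{t_k \neq t_{k-1}\} = \E\big[\sum_{k=0}^{K-1}\mathbf{1}_{\{t_k \neq t_{k-1}\}}\big] = \E[T(K)]$, and that Lemma~\ref{lem.T bound prox} bounds $T(K)$ \emph{almost surely} by $(J(K)+1)\lceil [\log_{1/\beta}(sL/(2(1-\alpha)))]_+ + 2\rceil$, so the same deterministic quantity bounds $\E[T(K)]$. Substituting these two facts, dividing through by $\alpha c K/2$, and using that $R_K$ is uniform on $\{0,\dots,K-1\}$ so that $\tfrac{1}{K}\sum_{k=0}^{K-1}\E[\|G_s(x_k)\|^2] = \E[\|G_s(x_{R_K})\|^2]$, we arrive at~\eqref{eq.rate bound} after collecting constants (noting $\tfrac{2}{\alpha c K}(\tfrac{1}{2L}+3Ls^2+\alpha c) = \tfrac{1/L+6Ls^2+2\alpha c}{\alpha c K}$ and $\tfrac{2}{\alpha c K}\cdot 3L(s-c)^2(L_r+3B_g)^2 = \tfrac{6L(s-c)^2(L_r+3B_g)^2}{\alpha c K}$).

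The argument is essentially bookkeeping once Propositions~\ref{lm: errbd prox} and~\ref{prop: sufficient decrease prox} and Lemma~\ref{lem.T bound prox} are in hand; I do not anticipate any genuinely hard step. The only points demanding a moment's care are (i) confirming the iterates remain in $\dom(r)$ so that $\phi_{\inf}$ is a legitimate lower bound for $\E[\phi(x_K)]$ — immediate from the range of $\prox_{t_k r}$ — and (ii) exploiting the \emph{pointwise} (almost‑sure) bound on $T(K)$ rather than merely an in‑expectation estimate, which is exactly what Lemma~\ref{lem.T bound prox} delivers, so that interchanging expectation and summation over the indicator events incurs no loss.
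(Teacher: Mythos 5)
Your proposal is correct and follows essentially the same route as the paper: telescope Proposition~\ref{prop: sufficient decrease prox}, lower-bound $\E[\phi(x_K)]$ by $\phi_{\inf}$, identify $\sum_{k=0}^{K-1}\bP\{t_k\neq t_{k-1}\}$ with $\E[T(K)]$ and invoke the deterministic bound of Lemma~\ref{lem.T bound prox}, then average over the uniform index $R_K$. The constant bookkeeping you carry out matches the paper's statement exactly.
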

\begin{proof}
    By Proposition \ref{prop: sufficient decrease prox} and by summing \eqref{eq.sufficient decrease prox} from $k = 0$ to $K-1$,  we obtain
\begin{align*}
    \phi_{\inf} - \phi(x_0) &\, \le \, \E\left[\, \phi(x_K)\, \right]  - \E\left[\, \phi(x_0)\, \right] \\
    &\le  - \frac{\alpha c}{2} \sum_{k=0}^{K-1}\E\left[\, \left\|\,   G_s(x_k) \, \right\|^2\, \right] +  \left( \, \frac{1}{2L} + 3Ls^2 + \alpha c\, \right) \sum_{k=0}^{K-1}\frac{\sigma^2}{N_k}
         \\
    & ~~~+ 3L(s-c)^2 (L_r+3B_g)^2\sum_{k=0}^{K-1}\bP\left\{\, t_{k} \neq t_{k-1}\, \right\}. 
\end{align*}
It remains to derive a bound on $\sum_{k=0}^{K-1}\bP\left\{\, t_{k} \neq t_{k-1}\, \right\}$, which is provided next. Recall that the random variable $T(K)$ represents the total number of iterations corresponding to $t_k \neq t_{k-1}$ when running Algorithm \ref{alg: sls pk prox} for $K$ iterations. Consequently, we may derive the required probability as follows.
\begin{alignb}
    \sum_{k=0}^{K-1}\bP\left\{\, t_{k} \neq t_{k-1}\, \right\} & = \sum_{k=0}^{K-1}\E\left[\, {\bf 1}_{\{t_{k} \neq t_{k-1}\}}\, \right] = \E\left[\, \sum_{k=0}^{K-1}{\bf 1}_{\{t_{k} \neq t_{k-1}\}}\, \right] = \E\left[\, T(K)\, \right]\\
    & \le (J(K)+1) \left\lceil\, \left[\, \log_{1/\beta} \left(\frac{sL}{2(1-\alpha)} \right)\, \right]_{+} + 2 \, \right\rceil, \label{eq.p to T}
\end{alignb}
where the last inequality follows from Lemma~\ref{lem.T bound prox} and $J(K)+1$ represents the total number of cycles in $K$ iterations which is a deterministic quantity. Therefore, if $R_K$ is a random variable taking values in $\left\{\, 0, 1, \cdots, K-1\, \right\}$ with $\mathbb{P}\left[ \, R_K = j\, \right] = 1/K$, then the result follows from $\E\left[ \, \left\| \, G_{s}(x_{R_K}) \right\|^2\, \right] = \tfrac{1}{K}\sum_{k=0}^{K-1} \E\left[\, \left\|\, G_s(x_k) \, \right\|^2\, \right]$ and the above inequalities.
\end{proof}

From the result in Proposition~\ref{proposition:main prox}, we could have the performance of $\E[ \|  G_{s}(x_{R_{K}})  \|^2]$ approaching to zero with certain choices of $\{N_k\}$ and $\{p_j\}$. We summarize these results in the following two subsections.

\subsection{Complexity guarantees}\label{subsec.nonaymptotic}
In this subsection, we analyze the iteration and sample complexities. For a real number $\epsilon > 0$, we define the iteration complexity as the smallest number of iterations $K(\epsilon)$ that guarantees
\begin{align}\label{eq.convergence measure}
    \E\left[ \, \left\| \,  G_{s}(x_{R_{K(\epsilon)}})  \, \right\|\, \right] \, \le \, \epsilon.
\end{align}
While the sample complexity, defined as  $S(\epsilon) \, \triangleq \, {\displaystyle \sum_{k=0}^{K(\epsilon)-1}} N_k$, represents the minimum number of samples necessary to compute an $\epsilon$ solution satisfying \eqref{eq.convergence measure}.
In the following Lemma~\ref{lem.Nk sum} and ~\ref{lem.pj sum}, we provide the bounds for $\sum_{k=0}^{K-1}\frac{1}{N_k}$ and $J(K) + 1$ for some specific choices of $\{N_k\}$ and $\{p_j\}$. Note that $J(K)$ defined in Lemma~\ref{lem.T bound prox} is a deterministic quantity since $\{p_j\}$ and $K$ are prescribed. We begin by providing some fairly rudimentary bounds on $\sum_{k=0}^{K-1} \frac{1}{N_k}$ for differing choices of the sequence $\{ N_k\}$ and find utilization in our convergence analysis. The proof is relegated to Appendix~\ref{app: prfs0}. 

\medskip

\begin{lemma}\label{lem.Nk sum} \em Consider a non-decreasing positive sequence $\left\{\, N_k \, \right\}$, where $N_k \in \N$ and some $K \in \N \setminus \{1\}$. Then the following holds. 
        (i) If $N_k = \lceil c_b K \rceil $ where $c_b > 0$, then 
        ${\displaystyle \sum_{k=0}^{K-1}} \frac{1}{N_k} \le  \frac{1}{c_b}$;
        (ii) If $N_k = k+1 $ then 
        ${\displaystyle \sum_{k=0}^{K-1}} \frac{1}{N_k} \le  \ln(K) + 1$;
        (iii) If $N_k = (k+1)^b $ for some $b \in (1, \infty)$, then ${\displaystyle \sum_{k=0}^{K-1}} \frac{1}{N_k} \le \frac{b}{b-1}$.
\end{lemma}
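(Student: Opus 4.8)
The plan is to bound each of the three sums by elementary estimates, treating the cases separately and using the monotonicity of $1/x$ to compare the discrete sum with an integral wherever a closed form is not available.

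\textbf{Case (i).} Here every term equals $\tfrac{1}{\lceil c_b K\rceil}\le \tfrac{1}{c_b K}$, so summing $K$ of them gives $\sum_{k=0}^{K-1}\tfrac{1}{N_k}\le K\cdot\tfrac{1}{c_bK}=\tfrac{1}{c_b}$. This is immediate and needs no integral comparison.

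\textbf{Case (ii).} With $N_k=k+1$ the sum is the harmonic sum $\sum_{k=0}^{K-1}\tfrac{1}{k+1}=\sum_{m=1}^{K}\tfrac{1}{m}$. I would use the standard bound $\sum_{m=1}^{K}\tfrac1m \le 1+\int_1^{K}\tfrac{1}{x}\,dx = 1+\ln K$, which follows because on each interval $[m,m+1]$ the function $1/x$ dominates the value $1/(m+1)$, i.e. $\tfrac{1}{m+1}\le \int_{m}^{m+1}\tfrac1x\,dx$ for $m\ge1$, plus the leading term $\tfrac11=1$. This yields $\sum_{k=0}^{K-1}\tfrac{1}{N_k}\le \ln(K)+1$.

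\textbf{Case (iii).} With $N_k=(k+1)^b$, $b>1$, the sum is $\sum_{m=1}^{K}m^{-b}\le 1+\int_1^{\infty}x^{-b}\,dx = 1+\tfrac{1}{b-1}=\tfrac{b}{b-1}$, again using that $1/x^b$ is decreasing so $m^{-b}\le \int_{m-1}^{m}x^{-b}\,dx$ for $m\ge2$, and bounding the $m=1$ term by $1$. Since the tail integral converges, the bound is uniform in $K$.

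None of these steps presents a genuine obstacle; the only mild subtlety is being careful with the index shift ($k$ running from $0$ to $K-1$ versus $m=k+1$ running from $1$ to $K$) and with where the extra additive constant $1$ comes from in cases (ii) and (iii)—namely, isolating the first term of the sum before applying the integral comparison, since $1/x$ and $1/x^b$ blow up as $x\to 0^+$ and one cannot integrate down to $0$. I would write the full argument in the appendix as indicated, keeping the integral-comparison lemma explicit for each decreasing integrand.
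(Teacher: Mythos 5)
Your proposal is correct and follows essentially the same route as the paper: the trivial term-by-term bound for (i), and the standard integral-comparison bound (splitting off the first term and comparing $\sum_{m=2}^{K}$ with $\int_{1}^{K}$) for (ii) and (iii). The only cosmetic difference is that in (iii) you integrate to $\infty$ whereas the paper integrates to $K$ and then bounds $\frac{K^{1-b}-b}{1-b}\le\frac{b}{b-1}$; both give the same constant.
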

\medskip
We now examine the sequence $\left\{ p_j \right\}$ from the standpoint of bounding $J(K)+1$. We consider three settings where the first setting sets $p_j$ as a constant defined by a fraction of the overall iteration limit. We then consider an increasing sequence where $p_j$ increases at an exponential rate with $j$ while the last choice chooses a linear growth in $p_j$. Collectively, the bound on $J(K)+1$ emerges in deriving the complexity guarantee.

\medskip

\begin{lemma}\label{lem.pj sum}
    \em Consider a positive integer $K \in \N \setminus \{1\}$ and a sequence $\left\{p_j\right\}$, where $p_j \in \N$ for every $j$. Then the following hold. 
    \begin{enumerate}[label=(\roman*)]
         \item\label{item.pj1} If $p_j = \lceil c_p K \rceil $ where $c_p \in (0,1]$, then $J(K)+1 \le \frac{1}{c_p}+1$.
         \item\label{item.pj2} If $p_j = 2^jl_0$ where $l_0 \in \N\setminus\{0\}$, then $J(K)+1 = \left\lceil \log_2\left(\frac{l_0 + K}{l_0}\right) \right\rceil = \mathcal{O}\left(\ln(K)\right)$.
         \item\label{item.pj3} If $p_j = l_0 + j$ where $l_0 \in \N\setminus\{0\}$ and $l_0 \le K$, then $$J(K) + 1 =\left\lceil \sqrt{(\tfrac{1}{2} + l_0)^2 + 2(K-l_0)} - (\tfrac{1}{2} + l_0) + 1\right\rceil = \mathcal{O}\left(\sqrt{K}\right)$$.
    \end{enumerate}
\end{lemma}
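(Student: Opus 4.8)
The plan is to handle the three cases uniformly by first evaluating the partial sum $P_l \triangleq \sum_{j=0}^{l} p_j$ in closed form and then identifying $J(K)$ as the smallest index $l$ with $P_l \ge K$. Since $p_j \ge 1$ for all $j$, $P_l$ is strictly increasing and unbounded, so this minimum is always attained; equivalently, $J(K)+1$ is the smallest integer $m = l+1 \in \N\setminus\{0\}$ for which $P_{m-1} \ge K$. With this reformulation, each part reduces to solving an (in)equality for $m$ and rounding up.

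For part \ref{item.pj1}, $p_j \equiv \lceil c_p K\rceil$ gives $P_l = (l+1)\lceil c_p K\rceil \ge (l+1)c_p K$, so $P_l \ge K$ holds as soon as $l+1 \ge 1/c_p$. Hence $J(K) \le \lceil 1/c_p\rceil - 1$, and $J(K)+1 \le \lceil 1/c_p\rceil \le 1/c_p + 1$, which is the claim; the rounding in $p_j$ is absorbed entirely by $\lceil c_p K\rceil \ge c_p K$. For part \ref{item.pj2}, the geometric sum yields $P_l = l_0(2^{l+1}-1)$, so $P_l \ge K \iff 2^{l+1} \ge (l_0+K)/l_0 \iff l+1 \ge \log_2\!\big((l_0+K)/l_0\big)$; the smallest such integer $l+1$ is exactly $\big\lceil \log_2\!\big((l_0+K)/l_0\big)\big\rceil$, which is $\mathcal{O}(\ln K)$.

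For part \ref{item.pj3}, with $p_j = l_0 + j$ the arithmetic sum is $P_l = (l+1)l_0 + \tfrac{l(l+1)}{2} = \tfrac12(l+1)(l+2l_0)$. Setting $m = l+1$, the condition $P_l \ge K$ becomes $m^2 + (2l_0-1)m - 2K \ge 0$; this quadratic in $m$ is increasing for $m \ge 0$, so the condition is equivalent to $m$ being at least its positive root, i.e.\ $m \ge \tfrac12 - l_0 + \sqrt{(l_0 - \tfrac12)^2 + 2K}$. Taking the ceiling of the right-hand side gives $J(K)+1$, and then the identities $(l_0 - \tfrac12)^2 + 2K = (\tfrac12 + l_0)^2 + 2(K - l_0)$ and $\tfrac12 - l_0 = 1 - (\tfrac12 + l_0)$ put it in the displayed form; the $\mathcal{O}(\sqrt K)$ bound is immediate since $\sqrt{(l_0-\tfrac12)^2 + 2K} = \mathcal{O}(\sqrt K)$.

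I do not anticipate a genuine obstacle: everything is elementary once the partial sums are written out. The only points requiring care are the bookkeeping with ceiling functions — in \ref{item.pj1} absorbing the rounding in $\lceil c_p K\rceil$, and in \ref{item.pj3} confirming that the ceiling of the exact real threshold is the smallest integer $m$ satisfying the quadratic inequality (using monotonicity of the quadratic on $m \ge 0$). The main "work" is thus just the algebraic rearrangement in \ref{item.pj3} needed to match the closed-form expression stated in the lemma.
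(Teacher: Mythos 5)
Your proposal is correct and follows essentially the same route as the paper: write the partial sums $\sum_{j=0}^{l}p_j$ in closed form and solve for the smallest admissible index, with only a cosmetic difference in part (i) (you bound $\lceil c_pK\rceil \ge c_pK$ directly rather than first writing $J(K)+1=\lceil K/\lceil c_pK\rceil\rceil$). The algebra in part (iii), including the identity $(l_0-\tfrac12)^2+2K=(\tfrac12+l_0)^2+2(K-l_0)$, checks out and matches the stated closed form.
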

The proof of Lemma~\ref{lem.pj sum} is relegated to Appendix~\ref{app: prfs0}. We are now in a position to derive a non-asymptotic rate guarantee in terms of $K$, which then facilitates the presentation of complexity guarantees.
\medskip
\begin{theorem}\label{thm.nonaymptotic constant nk pj} \em 
Suppose Assumptions~\ref{ass:prox}, \ref{ass: F}, and \ref{ass: var} hold.
Consider the sequence generated by Algorithm \ref{alg: sls pk prox}, where for a given $K$, $N_k = \lceil c_b K\rceil$, and $p_j = \lceil c_p K \rceil$, where $c_b > 0$ and $c_p \in (0,1]$ as defined in Lemma~\ref{lem.Nk sum}(i) and \ref{lem.pj sum}\ref{item.pj1}, respectively. If $R_K$ is a random index as defined in Proposition~\ref{proposition:main prox},  then the following hold. 
\noindent
\begin{enumerate}[label=(\roman*)]
    \item For any  $K \ge 1$,  \begin{alignb}
        & \quad \E\left[ \, \left\|\, G_{s}(x_{R_K}) \, \right\|^2\, \right] \\
        &\le \frac{2( \phi(x_0) - \phi_{\inf} )}{\alpha c K} +   \frac{ \left(\frac{1}{L} + 6Ls^2 + 2\alpha c\right) \sigma^2}{\alpha c c_b K}  \\
         & \quad \quad +  \frac{6L(s-c)^2 (L_r+3B_g)^2  \left\lceil\left[\log_{1/\beta} \left(\frac{sL}{2(1-\alpha)} \right)\right]_{+} + 2 \right\rceil (c_p + 1)}{\alpha c c_p K} \\
         & = \mathcal{O}\left(\, \frac{ ( \phi(x_0) - \phi_{\inf} ) + (L^{-1} + Ls^2 + c) \sigma^2 /c_b + L (s-c)^2 (B_g + L_r)^2 \left[\ln(Ls)\right]_{+}}{c K} \, \right),\label{eq.rate bound main}
    \end{alignb}
implying that the iteration complexity $K_{\epsilon}$ and sample-complexity $S_{\epsilon}$ that guarantee \eqref{eq.convergence measure} are 
  \begin{align*}   
        K_{\epsilon}  = \mathcal{O}\left(\, 
        \frac{ ( \phi(x_0) - \phi_{\inf} ) + (L^{-1} + Ls^2 + c) \sigma^2 /c_b + L (s-c)^2 (B_g + L_r)^2 \left[\ln(Ls)\right]_{+}}{c \epsilon^2}\, \right)
    \end{align*}
    and
    \begin{align*}
    S_\epsilon = \mathcal{O}\left(\, 
        \frac{ ( \phi(x_0) - \phi_{\inf} )^2 c_b + (L^{-1} + Ls^2 + c)^2 \sigma^4/c_b + L ^2(s-c)^4 (B_g + L_r)^4 \left[\ln(Ls)\right]_{+}^2 c_b}{c^2 \epsilon^4}\, \right).
\end{align*}
\item Suppose, in addition, $s = \frac{2(1-\alpha)\beta\zeta }{L}$ for some constant $\zeta \in (0,1]$. Then, for any $K \ge 1$, 
\begin{alignb}
        \E\left[ \, \left\|\, G_{s}(x_{R_K}) \, \right\|^2\, \right] &\le \frac{L\left(\frac{(\phi(x_0) - \phi_{\inf})}{(1-\alpha)\beta\zeta\alpha}\right)}{K} +   \frac{ \left(\frac{1}{2\alpha(1-\alpha)\beta \zeta} + \frac{12(1-\alpha)\beta \zeta}{\alpha} + 2\right) \sigma^2}{c_b K} \\
        &= \mathcal{O}\left(\, \frac{L(\phi(x_0) - \phi_{\inf}) + \sigma^2/c_b}{K} \, \right) \label{eq.rate bound s 1overL}
    \end{alignb}
and the iteration and sample-complexities that guarantee \eqref{eq.convergence measure} are 
 \begin{align*}   
        K_{\epsilon}  = \mathcal{O}\left(\, 
        \frac{ L( \phi(x_0) - \phi_{\inf} ) + \sigma^2/c_b}{\epsilon^2}\, \right) \text{ and }     S_\epsilon = \mathcal{O}\left(\, 
        \frac{ L^2( \phi(x_0) - \phi_{\inf} )^2 c_b + \sigma^4/c_b }{\epsilon^4} \, \right), \text{respectively}.
    \end{align*}
\end{enumerate}
\end{theorem}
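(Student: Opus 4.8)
The plan is to obtain Theorem~\ref{thm.nonaymptotic constant nk pj} as a fairly direct corollary of Proposition~\ref{proposition:main prox} together with the counting lemmas (Lemma~\ref{lem.Nk sum} and Lemma~\ref{lem.pj sum}), so the bulk of the work is algebraic substitution and simplification rather than a new argument. First I would record the bound from \eqref{eq.rate bound}, which expresses $\E[\|G_s(x_{R_K})\|^2]$ as a sum of three terms: an initial-suboptimality term $\tfrac{2(\phi(x_0)-\phi_{\inf})}{\alpha c K}$, a variance term $\tfrac{\frac{1}{L}+6Ls^2+2\alpha c}{\alpha c K}\sum_{k=0}^{K-1}\tfrac{\sigma^2}{N_k}$, and a steplength-change term proportional to $L(s-c)^2(L_r+3B_g)^2(J(K)+1)\lceil[\log_{1/\beta}(\tfrac{sL}{2(1-\alpha)})]_+ + 2\rceil / (\alpha c K)$.

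For part~(i), I would plug in $N_k=\lceil c_b K\rceil$ and invoke Lemma~\ref{lem.Nk sum}(i) to get $\sum_{k=0}^{K-1}\tfrac{1}{N_k}\le \tfrac{1}{c_b}$, so the variance term becomes $\tfrac{(\frac{1}{L}+6Ls^2+2\alpha c)\sigma^2}{\alpha c c_b K}$. Similarly, plugging in $p_j=\lceil c_p K\rceil$ and invoking Lemma~\ref{lem.pj sum}\ref{item.pj1} gives $J(K)+1\le \tfrac{1}{c_p}+1$, so the steplength-change term is bounded by $\tfrac{6L(s-c)^2(L_r+3B_g)^2\lceil[\log_{1/\beta}(\cdot)]_+ + 2\rceil(c_p+1)}{\alpha c c_p K}$; writing $\tfrac{1}{c_p}+1 = \tfrac{c_p+1}{c_p}$ matches the stated display. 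The $\mathcal{O}(\cdot)$ simplification then absorbs the constants $\alpha,\beta$ and the $\log$-base change (noting $\log_{1/\beta}(\tfrac{sL}{2(1-\alpha)}) = \Theta(\ln(Ls))$ up to additive/multiplicative constants, and $[\cdot]_+ + 2 = \Theta([\ln(Ls)]_+ + 1)$), yielding the claimed $\mathcal{O}(\cdot/(cK))$ rate. The iteration complexity $K_\epsilon$ follows by setting the right-hand side equal to $\epsilon^2$ and solving for $K$ (since the bound is of the form $C/(cK)$, we need $K \ge C/(c\epsilon^2)$); here one must use $\E[\|G_s\|]\le \sqrt{\E[\|G_s\|^2]}$ by Jensen to pass from the squared quantity to the unsquared convergence measure \eqref{eq.convergence measure}, which only changes constants. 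The sample complexity is $S_\epsilon = \sum_{k=0}^{K_\epsilon-1}N_k \le K_\epsilon \lceil c_b K_\epsilon\rceil = \mathcal{O}(c_b K_\epsilon^2)$, and substituting the expression for $K_\epsilon$ and expanding the square gives the stated $\mathcal{O}(1/\epsilon^4)$ bound.

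For part~(ii), the additional hypothesis $s=\tfrac{2(1-\alpha)\beta\zeta}{L}$ with $\zeta\in(0,1]$ forces $s \le \tfrac{2(1-\alpha)\beta}{L}$, hence $c=\min\{s,\tfrac{2(1-\alpha)\beta}{L}\}=s$. Consequently $s-c=0$, so the entire steplength-change term vanishes, eliminating the $\log$ factor and the dependence on $c_p$. It then remains to substitute $c=s=\tfrac{2(1-\alpha)\beta\zeta}{L}$ into the surviving two terms: $\tfrac{2(\phi(x_0)-\phi_{\inf})}{\alpha c K} = \tfrac{L(\phi(x_0)-\phi_{\inf})}{(1-\alpha)\beta\zeta\alpha K}$, and for the variance term, $\tfrac{1}{L}+6Ls^2+2\alpha c$ over $\alpha c$ becomes, after dividing through, $\tfrac{1}{2\alpha(1-\alpha)\beta\zeta} + \tfrac{12(1-\alpha)\beta\zeta}{\alpha} + 2$ (using $Ls = 2(1-\alpha)\beta\zeta$, $Ls^2 = 2(1-\alpha)\beta\zeta s$, and $\tfrac{1}{L\cdot\alpha c}=\tfrac{1}{\alpha L s} = \tfrac{1}{2\alpha(1-\alpha)\beta\zeta}$). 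This is exactly the display \eqref{eq.rate bound s 1overL}, and the $\mathcal{O}(\cdot)$ form follows since $\alpha,\beta,\zeta$ are constants; the complexity bounds $K_\epsilon,S_\epsilon$ are then derived exactly as in part~(i).

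The main obstacle, such as it is, is purely bookkeeping: carefully carrying the constants $\alpha$, $\beta$, $\zeta$, $c$, $c_b$, $c_p$ through the substitutions so that the explicit (non-$\mathcal{O}$) displays come out exactly as stated, and making sure the $\log$-base conversion and the ceiling functions are handled consistently when collapsing to $\mathcal{O}$-notation. There is no conceptual difficulty — the key insight (that the steplength-resetting error is controlled by $J(K)+1$ times a $\log$ factor, and that choosing $c_b, c_p$ as fractions of $K$ recovers canonical complexity) has already been established in Proposition~\ref{proposition:main prox} and Lemma~\ref{lem.T bound prox}. The one genuine subtlety worth flagging is part~(ii)'s observation that the hypothesis on $s$ makes $c = s$ and hence kills the $(s-c)^2$ term entirely; this is what allows the $\log$-free, $c_p$-independent bound and reveals that when $s$ is chosen in line with $L$, SLAM recovers the canonical $\mathcal{O}(L(\phi(x_0)-\phi_{\inf}) + \sigma^2)/\epsilon^2$ iteration complexity of SGD with a fixed batch.
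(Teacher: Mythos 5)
Your proposal is correct and follows essentially the same route as the paper's own proof: substitute the bounds from Lemma~\ref{lem.Nk sum}(i) and Lemma~\ref{lem.pj sum}\ref{item.pj1} into Proposition~\ref{proposition:main prox}, pass to the unsquared residual via Jensen, obtain $S_\epsilon = K_\epsilon\lceil c_b K_\epsilon\rceil$, and for part~(ii) observe that $\zeta\le 1$ forces $c=s$ so the $(s-c)^2$ term vanishes. The arithmetic in your substitutions for part~(ii) checks out against the stated displays, so no gaps remain.
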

\begin{proof} \noindent (i)  By Lemma~\ref{lem.Nk sum}(i) and \ref{lem.pj sum}\ref{item.pj1}, one has 
    \begin{align*}
        \sum_{k=0}^{K-1} \frac{1}{N_k} \le  \frac{1}{c_b} \quad \text{and} \quad J(K)+1 \le \frac{1}{c_p}+1.
    \end{align*}
    Therefore, \eqref{eq.rate bound main} is a consequence of Proposition~\ref{proposition:main prox}. The iteration complexity follows by invoking Jensen's inequality, leading to the claim that
    \begin{align*}
     \E\left[ \, \left\|\, G_{s}(x_{R_K}) \, \right\|\, \right]   \le \sqrt{\E\left[ \, \left\|\, G_{s}(x_{R_K}) \, \right\|^2\, \right]},
    \end{align*}
    and subsequently rearranging to find the smallest $K_{\epsilon}$ such that $\E\left[ \, \left\|\, G_{s}(x_{R_{K_{\epsilon}}}) \, \right\|\, \right] \le \epsilon$. The sample-complexity bound, given by $S_{\epsilon}$, follows by $S_\epsilon = K_\epsilon \lceil c_b K_{\epsilon}\rceil$ and invoking the inequality of $(a+b+c)^2 \le 3a^2 + 3b^2 + 3c^2$ for real numbers $a,b,c$. 
    \noindent (ii) By assuming that $s = \frac{2(1-\alpha)\beta\zeta }{L}$ for some constant $\zeta \in (0,1]$, we have that $c = s$ from the definition of $c$ in \eqref{eq:prox.t_bound}. Therefore, the results follow directly from part (i) by substituting $s$ and $c$ accordingly.
\end{proof}

\medskip
\begin{remark}  \em

\noindent (I) For the standard SGD with $\mathcal{O}\left(\frac{1}{L}\right)$-constant stepsize and $\mathcal{O}(1)$ batch size, the rate guarantee is $\E\left[\|g(x_{R_k})\|^2\right] = \mathcal{O}\left(\frac{L(\phi(x_0) - \phi_{\inf})}{K}+\sigma^2 \right)$ as established in \cite[Theorem 2.1]{Ghadimi2013} and \cite[Theorem 4.8]{Bottou2018}. This is matched by our result in the case of $s = \frac{2(1-\alpha)\beta\zeta}{L}$ in Theorem~\ref{thm.nonaymptotic constant nk pj}(ii) in \eqref{eq.rate bound s 1overL} and using batch size of $N_k = 1$ {,i.e., $c_b  = 1/K $}.\\ 

\noindent (II) {When $s = \frac{2(1-\alpha)\beta\zeta}{L}$, if $c_b$ is chosen as $c_b = \sigma^2/(L(\phi(x_0) - \phi_{\inf}))$, then $S_\epsilon$ can be sharpened to $\mathcal{O}\left(\frac{ L(\phi(x_0)-\phi_{\inf})\sigma^2 }{\epsilon^4} \right)$, consistent with the canonical sample complexity result implied by \cite[Corollary 2.2]{Ghadimi2013}. In this corollary, let $\tilde D = D_f$, which is the optimal selection, and we can deduce that the sample complexity is $\mathcal{O} \left( \frac{L(f(x_1)- f^*)}{\epsilon^2} + \frac{L(f(x_1)- f^*)\sigma^2}{\epsilon^4} \right)$.}\\

\noindent (III) It bears reminding that naively setting $s$ to the the minimum feasible value (when $L$ is available) may provide benefit in terms of an improved constant factor in the worst-case error bound; however, this can have a debilitating impact on the performance of the scheme, significantly limiting the ability of the scheme to adapt to the topography of the function by adapting steplengths. This impact becomes far more pronounced in ``peaky'' functions. In fact, the empirical benefits of our adaptive schemes emerge in our numerical studies.\\

\noindent (IV) Finally, our result addresses some key shortcomings present in prior work~\cite{Vaswani2019} in that our guarantees do not necessitate the satisfaction of the strong growth condition and allow for non-monotone steplength sequences without initial steplengths being stringently bounded in terms of the reciprocal of $L$. In fact, our scheme is parameter-free in that the algorithm parameters do not rely on any knowledge of problem parameters.   
$\hfill \Box$
\end{remark}
\medskip

The batch-size sequence $\left\{N_k\right\}$ and the period sequence $\{p_j\}$ in the previous theorem are dependent on $K$, i.e., dependent on $\epsilon$.  We are also interested in the case where both $\{N_k\}$ and $\{p_j\}$ are independent of $K$ and we provide such a result in the following corollary.

\begin{corollary}\label{cor.nonaymptotic increasing nk pj}\em
For any $K \in \N\setminus\{0\}$, consider the sequence generated by Algorithm \ref{alg: sls pk prox} using $N_k =k+1$ and $p_j = 2^j l_0$, where $l_0 \in \N\setminus\{0\}$ as defined in Lemma~\ref{lem.pj sum}\ref{item.pj2}. Suppose Assumptions~\ref{ass:prox}, \ref{ass: F}, and \ref{ass: var} hold. Suppose $R_K$ is defined in Proposition~\ref{proposition:main prox}. Then, the following hold.
\begin{enumerate}[label=(\roman*)]
    \item For any  $K \ge 1$,
\begin{align*}
         \E\left[ \, \left\| G_{s}(x_{R_K}) \right\|^2\, \right] &\le \frac{2( \phi(x_0) - \phi_{\inf} )}{\alpha c K} +   \frac{ \left(\frac{1}{L} + 6Ls^2 + 2\alpha c\right) \sigma^2 (\ln(K)+1)}{\alpha c K}  \\
         & ~~~+  \frac{6L(s-c)^2(L_r + 3B_g)^2\left\lceil\left[\log_{1/\beta} \left(\frac{sL}{2(1-\alpha)} \right)\right]_{+} + 2 \right\rceil \left\lceil \log_2\left(\frac{l_0 + K}{l_0}\right) \right\rceil}{\alpha c K} \\
         &  = \mathcal{O}\left(\, \frac{\Phi\ln(K) }{K} \, \right),
\end{align*}
where
\begin{align*}
\Phi \triangleq c^{-1}(\phi(x_0) - \phi_{\inf}) + (L^{-1}c^{-1} + 6Ls^2 c^{-1} + 2)\sigma^2 + 6L(s-c)^2(L_r + B_g)^2c^{-1}\left[\ln(sL)\right]_{+},
\end{align*}
implying that the iteration complexity $K_{\epsilon}$ and sample-complexity $S_{\epsilon}$ that guarantee \eqref{eq.convergence measure} are 
  \begin{align*}   
        K_{\epsilon}  = \mathcal{O}\left( \ln\left(\frac{\Phi}{\epsilon^2} \right)\frac{\Phi}{\epsilon^2} \right)
    \quad \text{and}\quad  
    S_\epsilon = \mathcal{O}\left( \left(\ln\left(\frac{\Phi}{\epsilon^2} \right)\right)^2\frac{\Phi^2}{\epsilon^4} \right).
\end{align*}
\item Suppose, in addition, $s = \frac{2(1-\alpha)\beta\zeta }{L}$ for some constant $\zeta \in (0,1]$. Then, for any $K \ge 1$, 
\begin{align*}
         \E\left[ \, \left\| G_{s}(x_{R_K}) \right\|^2\, \right] &\le \frac{L( \phi(x_0) - \phi_{\inf} )}{(1-\alpha)\beta \zeta\alpha K} +   \frac{ \left(\frac{1}{2\alpha(1-\alpha)\beta \zeta} + \frac{12(1-\alpha)\beta\zeta}{\alpha} + 2\right) \sigma^2 (\ln(K)+1)}{K} \\
         & = \mathcal{O}\left(\, \frac{\Phi_2 \ln(K)}{K} \, \right),
\end{align*}
where $\Phi_2 \triangleq L( \phi(x_0) - \phi_{\inf} ) + \sigma^2.$ 
\end{enumerate}
\end{corollary}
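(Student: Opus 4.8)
The plan is to specialize the master rate bound of Proposition~\ref{proposition:main prox} to the prescribed choices $N_k = k+1$ and $p_j = 2^j l_0$, and then to convert the resulting $\mathcal{O}(\Phi\ln K/K)$ decay into iteration and sample complexities. First I would invoke Proposition~\ref{proposition:main prox}, whose right-hand side \eqref{eq.rate bound} already folds in the bound $\E[T(K)]\le (J(K)+1)\lceil[\log_{1/\beta}(sL/(2(1-\alpha)))]_+ + 2\rceil$ coming from Lemma~\ref{lem.T bound prox}. Into that inequality I substitute $\sum_{k=0}^{K-1}1/N_k \le \ln(K)+1$ from Lemma~\ref{lem.Nk sum}(ii) and $J(K)+1 = \lceil\log_2((l_0+K)/l_0)\rceil$ from Lemma~\ref{lem.pj sum}\ref{item.pj2}. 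This reproduces the explicit three-term bound of part~(i) verbatim. The $\mathcal{O}(\Phi\ln K/K)$ statement then follows by observing that the linesearch-depth factor $\lceil[\log_{1/\beta}(sL/(2(1-\alpha)))]_+ + 2\rceil$ is a constant, that $\lceil\log_2((l_0+K)/l_0)\rceil = \mathcal{O}(\ln K)$, that $\ln(K)+1 = \mathcal{O}(\ln K)$, and that collecting the remaining constants (dividing through by $\alpha$, absorbing $1/\beta$ and $1-\alpha$ into the constant so that $[\log_{1/\beta}(sL/(2(1-\alpha)))]_+ = \mathcal{O}([\ln(sL)]_+)$, and using $L_r+3B_g = \mathcal{O}(L_r+B_g)$) yields exactly the coefficient $\Phi$ as defined in the statement.

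Next I would derive the iteration complexity. By Jensen's inequality $\E[\|G_s(x_{R_K})\|] \le \sqrt{\E[\|G_s(x_{R_K})\|^2]}$, so it suffices to find the smallest $K$ for which $\E[\|G_s(x_{R_K})\|^2]\le \epsilon^2$, i.e. $C\Phi\ln K/K \le \epsilon^2$ for the absolute constant $C$ from part~(i). This is the transcendental inequality $K/\ln K \ge C\Phi/\epsilon^2$; I would verify that $K = a\,(\Phi/\epsilon^2)\ln(\Phi/\epsilon^2)$ satisfies it for a sufficiently large absolute constant $a$, by noting that once $\Phi/\epsilon^2$ exceeds a fixed threshold one has $\ln K \le 2\ln(\Phi/\epsilon^2)$, whence $K/\ln K \ge (a/2)(\Phi/\epsilon^2) \ge C\Phi/\epsilon^2$. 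This gives $K_\epsilon = \mathcal{O}(\ln(\Phi/\epsilon^2)\,\Phi/\epsilon^2)$. The sample complexity then comes directly from $S_\epsilon = \sum_{k=0}^{K_\epsilon-1}N_k = \sum_{k=0}^{K_\epsilon-1}(k+1) = \tfrac12 K_\epsilon(K_\epsilon+1) = \mathcal{O}(K_\epsilon^2) = \mathcal{O}\big((\ln(\Phi/\epsilon^2))^2\,\Phi^2/\epsilon^4\big)$.

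Finally, part~(ii) is the special case $s = 2(1-\alpha)\beta\zeta/L$, which by the definition of $c$ in Lemma~\ref{lm.prox:t_lb} forces $c = s$. Then the term $3L(s-c)^2(L_r+3B_g)^2$ appearing in \eqref{eq.sufficient decrease prox} and \eqref{eq.rate bound} vanishes, so the entire last summand of the part~(i) bound drops out; substituting $c = s = 2(1-\alpha)\beta\zeta/L$ into the two surviving terms (so that $1/(\alpha c) = L/(2\alpha(1-\alpha)\beta\zeta)$, and $\tfrac1L + 6Ls^2 + 2\alpha c$ becomes $\tfrac1L(1 + \text{consts})$) produces the stated bound with $\Phi_2 = L(\phi(x_0)-\phi_{\inf}) + \sigma^2$.

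I expect the only mildly delicate point to be the inversion of $K/\ln K \ge A$ into a closed-form $K_\epsilon = \mathcal{O}(A\ln A)$, which needs the short threshold argument above (or, equivalently, an iterated-logarithm/Lambert-$W$ estimate) rather than a bare algebraic rearrangement; everything else is direct substitution into Proposition~\ref{proposition:main prox} together with the bookkeeping of constants into $\Phi$.
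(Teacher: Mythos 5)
Your proposal is correct and follows essentially the same route as the paper: substitute Lemma~\ref{lem.Nk sum}(ii) and Lemma~\ref{lem.pj sum}\ref{item.pj2} into Proposition~\ref{proposition:main prox}, invert $K/\ln K \ge \Phi/\epsilon^2$ (the paper cites the Lambert function where you give an equivalent explicit threshold argument), compute $S_\epsilon = K_\epsilon(K_\epsilon+1)/2$, and observe that $s=c$ kills the $(s-c)^2$ term in part~(ii). No gaps.
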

\begin{proof}
    The rate result in (i) follows by Lemma~\ref{lem.Nk sum}(ii), \ref{lem.pj sum}\ref{item.pj2}, and Proposition~\ref{proposition:main prox}. To derive the iteration complexity, one has to find the largest $K_{\epsilon}$ such that 
     \begin{align*}
\frac{\epsilon^2}{\Phi}   \, \le \,   \E\left[\, \left \| \, G_{s}\left(x_{R_{K_{\epsilon}}}\right) \, \right \|^2\, \right]/\Phi = \mathcal{O}\left(\frac{\ln (K_{\epsilon})}{K_\epsilon}\right).
     \end{align*}
     The solution of $K_\epsilon$  can be obtained via the Lambert function (cf.~\cite{Jalilzadeh2022}), and the sample-complexity follows from 
     \begin{align*}
        S_\epsilon = \sum_{k=0}^{K_{\epsilon}-1} (k+1) = \frac{K_{\epsilon}(K_{\epsilon}+1)}{2}.
     \end{align*}
The result of (ii) follows immediately with $s = c = \frac{2(1-\alpha)\beta\zeta}{L}$.
\end{proof}
Following from Corollary~\ref{cor.nonaymptotic increasing nk pj}, we consider the performance of the proposed algorithm when $K$ goes to infinity. As such, the choices of $\{N_k\}$ and $\{p_j\}$ being dependent on $K$ are not applicable. Instead, we choose a sequence $\{p_j\}$ such that the resulting $J(K)/K \to 0$ and $\{N_k\}$ grows.
\begin{proposition}\label{thm.aymptotic}\em
Consider the sequence generated by Algorithm \ref{alg: sls pk prox}. Suppose Assumptions~\ref{ass:prox}, \ref{ass: F}, and \ref{ass: var} hold. Suppose the choice of $\{p_j\}$ satisfies 
\begin{align*}
    \lim_{K \to \infty} \frac{J(K)}{K}  = 0.
\end{align*}
Suppose for any $K > 0$, $R_K$ is as defined in Proposition~\ref{proposition:main prox}. 
\begin{enumerate}[label=(\roman*)]
    \item If $N_k = b$ for some constant $b \in \N\setminus\{0\}$, then (i.a) and (i.b) hold. 
\begin{align*}
      \mbox{(i.a)} & \quad  \limsup_{K \to \infty} \E\left[ \, \left\|\, G_{s}(x_{R_K}) \, \right\|^2\, \right] \, \le \, \frac{ (\frac{1}{L} + 6Ls^2 + 2\alpha c) \sigma^2}{\alpha c b} \\
       \mbox{(i.b)}&  \quad  \liminf_{k \to \infty} \E\left[ \, \left\|\, G_{s}(x_{k}) \, \right\|^2\, \right] \, \le \, \frac{ (\frac{1}{L} + 6Ls^2 + 2\alpha c) \sigma^2}{\alpha c b}.
    \end{align*}
    If $b \ge  \epsilon^{-1}$, then ${\displaystyle \limsup_{K \to \infty}} \ \E\left[ \, \left\|\, G_{s}(x_{R_K}) \, \right\|^2\, \right] \, = \, \mathcal{O}(\epsilon).$ 
    \item If $\{N_k\}$ satisfies 
        $\lim_{K \to \infty}\frac{1}{K}\sum_{k=0}^{K-1} \frac{1}{N_k} = 0$, 
    then (ii.a) and (ii.b) hold.   
    \begin{align*}
        \mbox{(ii.a)} \quad \lim_{K \to \infty} \E\left[\, \left\|\, G_{s}(x_{R_K}) \, \right\|^2\, \right]\,  = \, 0; \quad 
        \mbox{(ii.b)} \quad \liminf_{k \to \infty} \E\left[\, \left\|\, G_{s}(x_{k})\, \right\|^2\, \right] = 0.
    \end{align*}
\end{enumerate}
\end{proposition}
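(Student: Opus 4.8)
The plan is to obtain both parts as immediate corollaries of the non-asymptotic estimate \eqref{eq.rate bound} in Proposition~\ref{proposition:main prox}, by tracking the asymptotics of its three right-hand-side terms: (I) the initial-suboptimality term $\tfrac{2(\phi(x_0)-\phi_{\inf})}{\alpha c K}$; (II) the variance term, equal to $\tfrac{(\frac1L+6Ls^2+2\alpha c)\sigma^2}{\alpha c}\cdot\tfrac1K\sum_{k=0}^{K-1}N_k^{-1}$; and (III) the steplength-change term, which by Lemma~\ref{lem.T bound prox} is a $K$-independent constant (the factor $6L(s-c)^2(L_r+3B_g)^2/(\alpha c)$ times $\lceil[\log_{1/\beta}(\tfrac{sL}{2(1-\alpha)})]_{+}+2\rceil$) multiplied by $(J(K)+1)/K$. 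Term (I) is $\mathcal{O}(1/K)$ and term (III) is $\mathcal{O}(J(K)/K)$, which vanishes under the standing hypothesis $J(K)/K\to 0$; hence the limiting behavior is dictated solely by (II).

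For part (i), I would substitute $N_k\equiv b$, so that $\tfrac1K\sum_{k=0}^{K-1}N_k^{-1}=\tfrac1b$ exactly and term (II) equals the constant $\tfrac{(\frac1L+6Ls^2+2\alpha c)\sigma^2}{\alpha c b}$; taking $\limsup_{K\to\infty}$ in \eqref{eq.rate bound} annihilates (I) and (III) and leaves precisely the bound in (i.a). For (i.b) I would use $\E[\|G_s(x_{R_K})\|^2]=\tfrac1K\sum_{k=0}^{K-1}\E[\|G_s(x_k)\|^2]$ together with the elementary inequality $\liminf_k a_k\le\liminf_K\tfrac1K\sum_{k=0}^{K-1}a_k$ for nonnegative $\{a_k\}$, applied with $a_k=\E[\|G_s(x_k)\|^2]$; combined with (i.a) this bounds $\liminf_k\E[\|G_s(x_k)\|^2]$ by the same constant. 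The final clause is then immediate: $b\ge\epsilon^{-1}$ renders that constant at most $\tfrac{(\frac1L+6Ls^2+2\alpha c)\sigma^2}{\alpha c}\,\epsilon=\mathcal{O}(\epsilon)$.

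For part (ii), the hypothesis $\tfrac1K\sum_{k=0}^{K-1}N_k^{-1}\to 0$ sends term (II) to $0$, and since (I) and (III) vanish as above, \eqref{eq.rate bound} yields $\E[\|G_s(x_{R_K})\|^2]\to 0$, i.e.\ (ii.a). Statement (ii.b) then follows from the same liminf-of-averages inequality and nonnegativity: $0\le\liminf_k\E[\|G_s(x_k)\|^2]\le\liminf_K\E[\|G_s(x_{R_K})\|^2]=0$.

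There is no genuine analytic obstacle here; the only point requiring care is the clean separation of $K$-dependence in term (III) — reading off from Lemma~\ref{lem.T bound prox} that its numerator factors as (a $K$-independent constant)$\,\times\,(J(K)+1)$, so that only the assumed rate $J(K)/K\to 0$ is invoked — and stating the auxiliary liminf inequality explicitly rather than leaving it implicit.
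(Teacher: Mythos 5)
Your proposal is correct and follows essentially the same route as the paper: both take $\limsup$ in the bound \eqref{eq.rate bound} of Proposition~\ref{proposition:main prox}, use $J(K)/K \to 0$ to annihilate the steplength-change term, and reduce everything to the behavior of $\tfrac{1}{K}\sum_{k=0}^{K-1}N_k^{-1}$. The only difference is cosmetic: you state the auxiliary inequality $\liminf_k a_k \le \liminf_K \tfrac{1}{K}\sum_{k=0}^{K-1}a_k$ explicitly, whereas the paper leaves the passage from (i.a)/(ii.a) to (i.b)/(ii.b) implicit.
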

\begin{proof}
    From \eqref{eq.rate bound} and ${\displaystyle \lim_{K \to \infty}} \frac{J(K)}{K}  = 0$, we have
        \begin{align} 
         \limsup_{K\to \infty}\E[ \| G_{s}(x_{R_K}) \|^2] &\le  \frac{( \tfrac{1}{L} + 6Ls^2 + 2\alpha c) \sigma^2}{\alpha c} \lim_{K \to \infty} \sum_{k=0}^{K-1} \frac{1}{KN_k}.
    \end{align}
    The results of two cases of $\left\{N_k\right\}$ follows in the form of (i.a) and (ii.a), while (i.b) and (ii.b) follows from (i.a) and (ii.a), respectively.
\end{proof}
\begin{remark}\em
    We provide sufficient conditions for $\{p_j\}$ and $\{N_k\}$ to guarantee the asymptotic convergence result in Proposition~\ref{thm.aymptotic}. To be specific, for example, the choice of $\{p_j\}$ in Lemma~\ref{lem.pj sum}\ref{item.pj2} and \ref{item.pj3} satisfy the condition of $\lim_{K \to \infty} \frac{J(K)}{K}  = 0$, and $\{N_k\}$ in Lemma~\ref{lem.Nk sum}\ref{item.pj2} and \ref{item.pj3} satisfy $\lim_{K \to \infty}\frac{1}{K}\sum_{k=0}^{K-1} \frac{1}{N_k} = 0$.$\hfill \Box$
\end{remark}

\section{Specializations to PL and convex regimes} \label{sec:4}

{In this section, we refine our guarantees to two important settings. Specifically, we present rates and guarantees for the PL setting, which subsumes the strongly convex regime, in Section~\ref{sec:pl}. Subsequently, by appealing to a somewhat modified linesearch criterion, we present a result for convex settings in Section~\ref{sec:convex}.}

\subsection{Guarantees under the PL condition}\label{sec:pl}
The next result applies when the problems satisfy the following condition for some $\mu > 0$,
\begin{align}\label{eq.plcondition}
    \left\|\, G_s(x)\, \right\|^2 \, \ge \, \mu (\phi(x) - \phi_{\inf}) \text{ for all } x \in \dom(r).
\end{align}
Condition~\eqref{eq.plcondition} is referred to as the \emph{generalized Lezanski-Polyak-Lojasiewicz (LPL)} condition, reducing to standard LPL condition in the smooth setting (i.e., $r(x) \equiv 0$). Note that in the literature, this condition is often referred to as the PL condition. Problems satisfying the  LPL condition, given by \eqref{eq.plcondition}, include strongly convex problems, amongst others (cf.~\cite{Karimi2020}). Our PL condition \eqref{eq.plcondition} is identical to the one used in \cite{Li2018}, which is considered more natural, and differs from the one adopted in \cite{Karimi2020}. The equivalence of the two conditions when using $s = 1/L$ is discussed in \cite{Li2018}. We now provide a rate and complexity statement by leveraging our prior analysis.

\medskip
\begin{corollary} \label{cor.pl convergence}
\em
    Suppose the conditions in Theorem~\ref{thm.nonaymptotic constant nk pj} hold. Additionally, suppose that the problem satisfies \eqref{eq.plcondition}. Consider the sequence $\left\{\, x_k \, \right\}$ generated by Algorithm~\ref{alg: sls pk prox} and let $K$ and $R_K$ be as defined in Theorem~\ref{thm.nonaymptotic constant nk pj}. Then the following hold.
\begin{enumerate}[label=(\roman*)]
    \item For any  $K \ge 1$,  
    \begin{align*}
     \E\left[\,  \phi(x_{R_K}) - \phi_{\inf} \, \right] & \le \frac{2( \phi(x_0) - \phi_{\inf} )}{\mu\alpha c K} +   \frac{ \left(\frac{1}{L} + 6Ls^2 + 2\alpha c\right) \sigma^2}{\mu \alpha c c_b K}  \\
        &  ~~~+  \frac{6L(s-c)^2 (L_r+3B_g)^2  \left\lceil\left[\log_{1/\beta} \left(\frac{sL}{2(1-\alpha)} \right)\right]_{+} + 2 \right\rceil (c_p + 1)}{\mu\alpha c c_p K}.
    \end{align*}
For any $\epsilon > 0$, suppose $K_{\epsilon}$ is the number of iterations that guarantees 
\begin{align} \label{eq.obj convergence measure}
 \E\left[ \, \phi(x_{R_{K(\epsilon)}}) - \phi_{\inf} \, \right] \, \le \,  \epsilon.  \end{align}
Then the iteration-complexity $K_{\epsilon}$ and the sample-complexity $S_{\epsilon}$ are given by
  \begin{align*}   
        K_{\epsilon}  = \mathcal{O}\left(\, 
        \frac{ ( \phi(x_0) - \phi_{\inf} ) + (L^{-1} + Ls^2 + c) \sigma^2 + L (s-c)^2 (B_g + L_r)^2 \left[\ln(Ls)\right]_{+}}{\mu c \epsilon}\, \right)
    \end{align*}
    and
    \begin{align*}
    S_\epsilon = \mathcal{O}\left(\, 
        \frac{ ( \phi(x_0) - \phi_{\inf} )^2 + (L^{-1} + Ls^2 + c)^2 \sigma^4 + L ^2(s-c)^4 (B_g + L_r)^4 \left[\ln(Ls)\right]_{+}^2}{c^2 \epsilon^2}\, \right),
\end{align*}
respectively.
\item Suppose, in addition, $s = \frac{2(1-\alpha)\beta\zeta }{L}$ for some constant $\zeta \in (0,1]$. Then, for any $K \ge 1$, 
\begin{align*}
         \E\left[\,  \phi(x_{R_K}) - \phi_{\inf} \, \right] = \mathcal{O}\left(\, \frac{L(\phi(x_0) - \phi_{\inf}) + \sigma^2}{\mu K} \, \right) 
    \end{align*}
and the iteration and sample-complexities that guarantee \eqref{eq.obj convergence measure} are 
 \begin{align*}   
        K_{\epsilon}  = \mathcal{O}\left(\, 
        \frac{ L( \phi(x_0) - \phi_{\inf} ) + \sigma^2}{\mu \epsilon}\, \right) \text{ and }     S_\epsilon = \mathcal{O}\left(\, 
        \frac{ L^2( \phi(x_0) - \phi_{\inf} )^2 + \sigma^4}{\mu^2 \epsilon^2} \, \right) \text{respectively}.
    \end{align*}
\end{enumerate} $\hfill \Box$
\end{corollary}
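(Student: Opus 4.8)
The plan is to obtain the suboptimality rate for $\E[\phi(x_{R_K}) - \phi_{\inf}]$ by feeding the generalized LPL inequality \eqref{eq.plcondition} into the stationarity-residual bound already established in Theorem~\ref{thm.nonaymptotic constant nk pj}(i), and then to read off the iteration- and sample-complexities exactly as in that theorem's proof.

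First I would apply \eqref{eq.plcondition} at each iterate $x_k$, which is legitimate because every $x_k$ produced by Algorithm~\ref{alg: sls pk prox} lies in $\dom(r)$ (the proximal update returns a point of $\dom(r)$); this gives $\|G_s(x_k)\|^2 \ge \mu(\phi(x_k) - \phi_{\inf})$ pointwise. Taking expectations, summing over $k \in \{0,\dots,K-1\}$, and using that $R_K$ is uniform and independent of the trajectory, one gets
\[
\E\left[\,\|G_s(x_{R_K})\|^2\,\right] \;=\; \frac{1}{K}\sum_{k=0}^{K-1}\E\left[\,\|G_s(x_k)\|^2\,\right] \;\ge\; \frac{\mu}{K}\sum_{k=0}^{K-1}\E\left[\,\phi(x_k) - \phi_{\inf}\,\right] \;=\; \mu\,\E\left[\,\phi(x_{R_K}) - \phi_{\inf}\,\right].
\]
Dividing the upper bound of Theorem~\ref{thm.nonaymptotic constant nk pj}(i) by $\mu$ then yields the rate in part~(i) verbatim. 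Note that, unlike the nonconvex case, no application of Jensen's inequality is required here, because $\E[\phi(x_{R_K}) - \phi_{\inf}]$ is controlled directly rather than through a square root; this is precisely why the $\epsilon$-dependence sharpens from $\epsilon^{-2}$/$\epsilon^{-4}$ to $\epsilon^{-1}$/$\epsilon^{-2}$.

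For the complexities, the part-(i) rate has the form (problem-dependent constant)$/(\mu c K)$, where the constant aggregates $(\phi(x_0)-\phi_{\inf})$, $(L^{-1}+Ls^2+c)\sigma^2$, and $L(s-c)^2(B_g+L_r)^2[\ln(sL)]_{+}$, up to the fixed factors $c_b$, $c_p$ and the logarithmic backtracking term inherited from the bound on $T(K)$. Solving (problem-dependent constant)$/(\mu c K) \le \epsilon$ for the smallest $K$ gives $K_\epsilon = \mathcal{O}(\text{constant}/(\mu c \epsilon))$, which is the stated iteration-complexity; the sample-complexity follows from $S_\epsilon = \sum_{k=0}^{K_\epsilon-1} N_k = K_\epsilon \lceil c_b K_\epsilon\rceil = \mathcal{O}(c_b K_\epsilon^2)$ together with $(a+b+c)^2 \le 3(a^2+b^2+c^2)$, mirroring the argument in the proof of Theorem~\ref{thm.nonaymptotic constant nk pj}(i). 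Part~(ii) is then immediate: under $s = \tfrac{2(1-\alpha)\beta\zeta}{L}$ the definition of $c$ in \eqref{eq:prox.t_bound} forces $c = s$, so the $(s-c)^2$ term and the backtracking logarithm both drop out, and substituting $s$, $c$ into part~(i) collapses the bound to $\mathcal{O}((L(\phi(x_0)-\phi_{\inf})+\sigma^2)/(\mu K))$, from which the simplified $K_\epsilon$ and $S_\epsilon$ follow.

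There is no genuinely new analytic content here, so the main obstacle is only mild bookkeeping: one must check that the pointwise LPL bound may indeed be invoked at the random iterates $x_k$ (it can, since they stay in $\dom(r)$) and that the averaging over $R_K$ interchanges with that inequality; and one must track which constants stay inside the aggregated numerator versus which are absorbed into $\mathcal{O}(\cdot)$ when passing from the rate to the complexity estimates, so that the statements of parts~(i) and~(ii) remain mutually consistent. Everything else is a direct specialization of the nonconvex analysis.
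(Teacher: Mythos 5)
Your proof is correct and matches the paper's (implicit) argument: the corollary is stated without proof precisely because it is the immediate specialization you describe, namely applying \eqref{eq.plcondition} pointwise at each $x_k \in \dom(r)$, averaging over the uniform index $R_K$, and dividing the bound of Theorem~\ref{thm.nonaymptotic constant nk pj}(i) by $\mu$, with the complexities read off without Jensen's inequality. Your bookkeeping via $S_\epsilon = K_\epsilon\lceil c_b K_\epsilon\rceil$ is sound (and in fact would place a $\mu^2$ in the denominator of $S_\epsilon$ in part~(i), which the paper's displayed constant appears to suppress), so no further work is needed.
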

\begin{remark} \em
Under the PL-condition and by using a sufficiently small constant steplength of $\mathcal{O}(1/L)$, a linear (last-iterate) convergence rate is construct in \cite{Li2018,Karimi2020,Lei2020b} and the resulting iteration complexity is bounded by $\mathcal{O}\left(\tfrac{L}{\mu}\ln(\epsilon^{-1})\right)$. Among these results, the authors in~\cite{Lei2020b} consider a stochastic setting where the sample-size grows at a suitably defined geometric rate in order to achieve the linear convergence. Notably, these guarantees necessitate knowledge of $L$, employing a constant steplength scheme; consequently, this steplength can be exceedingly small if chosen in alignment with the theoretical prescription. Instead, our analysis leverages adaptive steplengths, does not necessitate knowledge of either $\mu$ or $L$, and leverages \eqref{eq.sufficient decrease prox}. As a result, the scheme necessitates the control of the term $\mathbb{P}\{\, t_k \neq t_{k-1}\, \}$, which does not appear to decay geometrically, making it challenging to derive a linear convergence rate. $\hfill \Box$
\end{remark}

\subsection{Addressing convex settings}\label{sec:convex}
We now specialize our findings to the convex case and impose the following assumption.
\begin{assumption}[Convexity and compactness] \label{ass: convexity}\em
The function $f$ is convex and the effective domain $\dom(r)$ is compact with diameter $D_x$, i.e., $D_x \triangleq {\displaystyle \sup_{x,y \in \dom(r)}} \|x-y\|^2 \le D_x^2$. 
\end{assumption}

Under compactness requirements, we may claim that a minimizer $x^*$ is attained over set $\dom(r)$, and $\phi(x_*) = \phi_{\inf}$.
Instead of imposing the Armijo condition in the nonconvex setting \eqref{eq.prox.armijo}, we impose a slightly modified condition for convex problems for some $\alpha \in {[1/2,1]}$, as defined as follows.
\begin{align}\label{eq.ls_convex}
 \tf_k(x_k(t_k)) & \, \le \, \tf_k(x_k) - \tg _k(x_k) ^\top( x_k - x_k(t_k) ) + \frac{1-\alpha}{t_k} \left\| \, x_k - x_k(t_k) \, \right\|^2. 
\end{align}
The resulting algorithm is formally defined in Algorithm~\ref{alg: sls pk prox convex} and referred to as {\bf SLAM}$^{\rm con}$ to capture that such a scheme is specialized to settings where $f$ is convex.
\begin{tcolorbox}
\begin{algorithm}[H]\caption{A Stochastic LineseArch Method ({\bf SLAM$^{\rm con}$}) under convexity of $f$}\label{alg: sls pk prox convex}
    \begin{algorithmic}[1]
        \Require same with Algorithm~\ref{alg: sls pk prox} except $\alpha \in [1/2, 1]$
        \State Replacing the linesearch condition \eqref{eq.prox.armijo} in Line~\ref{line.backtrack} Algorithm~\ref{alg: sls pk prox} with \eqref{eq.ls_convex}.
    \end{algorithmic}
\end{algorithm}
\end{tcolorbox}
\begin{remark}[Relation between nonconvex and convex linesearch conditions]\em
    We consider the linesearch condition \eqref{eq.ls_convex}, considered in a setting with convex composite functions,  as proposed by \cite{Beck2009} with $\alpha = \frac{1}{2}$   and further studied in \cite{Salzo2017,Beck2017}. When $r \equiv 0$, both condition \eqref{eq.ls_convex} and the nonconvex condition \eqref{eq.prox.armijo} reduces to the standard Armijo condition in the smooth unconstrained setting. Otherwise, when $r \neq 0$, the analysis for nonconvex setting still holds for the following reasons.
    \begin{enumerate}
        \item[(i)] The lower bound of $t_k$ generated by Algorithm~\ref{alg: sls pk prox convex} is the same as that in Lemma~\ref{lm.prox:t_lb} and the proof is similar.
        \item[(ii)]  The convex line search condition \eqref{eq.ls_convex} implies the nonconvex one \eqref{eq.prox.armijo}. Observing points $x_k$, $x_k - t_k \tg_k(x_k)$, and $x_k(t_k)$, by utilizing convexity of $r$ and invoking Lemma~\ref{lem.prox}, one has
\begin{alignb}    
    & (x_k - t_k \tg_k(x_k) - x_k(t_k))^\top (x_k - x_k(t_k)) \le t_k (r(x_k) - r(x_k(t_k))) \\
    \iff &  \frac{\|x_k - x_k(t_k)\|^2}{t_k} - \tg_k(x_k)^\top (x_k - x_k(t_k)) \le r(x_k) - r(x_k(t_k))  \label{eq.prox inequality}
\end{alignb}
Adding inequality \eqref{eq.ls_convex} and \eqref{eq.prox inequality} yields \eqref{eq.prox.armijo}. 
    \end{enumerate} 
Therefore, all supporting lemmas in Section~\ref{sec:supporting lemmas} remain valid when using Algorithm~\ref{alg: sls pk prox convex}. The subsequent analysis diverges from this point, focusing on the suboptimality measure \eqref{eq.obj convergence measure} for convex problems, rather than on stationarity. $\hfill \Box$
\end{remark}

We begin our analysis by deriving a bound on $ \E\left[\|x_{k+1} - x^*\|^2\left(\tfrac{1}{t_{k+1}} - \tfrac{1}{t_k}\right)\right]$.
\medskip
\begin{lemma}\label{lm: errbd convex}
 \em   Consider Algorithm~\ref{alg: sls pk prox convex} and suppose that Assumptions \ref{ass:prox} -\ref{ass: convexity} hold.  Then for any $k \in \N$, the following hold.
    \begin{align*}
        \E\left[\, \left\|\, x_{k+1} - x^*\, \right\|^2\left(\frac{1}{t_{k+1}} - \frac{1}{t_k}\right)\, \right] \,  \le D_x^2 \left(\, \frac{1}{c} - \frac{1}{s}\, \right) \bP\left\{\, t_{k+1} \, \neq \, t_k\, \right\}.
    \end{align*}
\end{lemma}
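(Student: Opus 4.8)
The plan is to derive an almost-sure pointwise bound on $\|x_{k+1}-x^*\|^2\bigl(\tfrac{1}{t_{k+1}}-\tfrac{1}{t_k}\bigr)$ and then take expectations. First I would recall the two structural facts about the steplengths that matter here: by Lemma~\ref{lm.prox:t_lb} (which remains valid for Algorithm~\ref{alg: sls pk prox convex}, as noted in the remark preceding this lemma) one has $c \le t_k \le s$ and $c \le t_{k+1}\le s$ almost surely, and within a single cycle the steplength sequence is non-increasing, so $t_{k+1}\le t_k$ unless iteration $k+1$ initiates a new cycle. I would also invoke Assumption~\ref{ass: convexity}: since $x_{k+1}$ is produced by a proximal step (hence lies in $\dom(r)$) and the minimizer $x^*$ lies in $\dom(r)$, we have $0\le \|x_{k+1}-x^*\|^2 \le D_x^2$ almost surely.

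Next I would argue the pointwise inequality
\[
\|x_{k+1}-x^*\|^2\left(\tfrac{1}{t_{k+1}}-\tfrac{1}{t_k}\right)\;\le\; D_x^2\left(\tfrac{1}{c}-\tfrac{1}{s}\right){\bf 1}_{\{t_{k+1}\neq t_k\}}\qquad\text{a.s.}
\]
by a short case split on the sign of $\tfrac{1}{t_{k+1}}-\tfrac{1}{t_k}$. If $t_{k+1}=t_k$, both sides vanish. If $t_{k+1}\neq t_k$ with $\tfrac{1}{t_{k+1}}-\tfrac{1}{t_k}\ge 0$ (the intra-cycle case, where $t_{k+1}<t_k$), I bound $\|x_{k+1}-x^*\|^2 \le D_x^2$ and then use $\tfrac{1}{t_{k+1}}\le \tfrac1c$ together with $\tfrac{1}{t_k}\ge \tfrac1s$ to get $\tfrac{1}{t_{k+1}}-\tfrac{1}{t_k}\le \tfrac1c-\tfrac1s$. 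If instead $\tfrac{1}{t_{k+1}}-\tfrac{1}{t_k}<0$ (which can only occur at a cycle reset), the left-hand side is nonpositive and is therefore trivially dominated by the nonnegative right-hand side. Taking expectations and using $\E\bigl[{\bf 1}_{\{t_{k+1}\neq t_k\}}\bigr]=\bP\{t_{k+1}\neq t_k\}$ then yields the claim.

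The only subtlety — which I would flag as the main, and quite mild, obstacle — is the sign bookkeeping: $\tfrac{1}{t_{k+1}}-\tfrac{1}{t_k}$ is nonnegative inside a cycle but may turn negative exactly when a new cycle begins, so one must verify that both the replacement of $\|x_{k+1}-x^*\|^2$ by $D_x^2$ and the bound by $\tfrac1c-\tfrac1s$ are either valid or vacuous in each regime. Everything else reduces directly to the deterministic bounds $c\le t_k\le s$ from Lemma~\ref{lm.prox:t_lb} and the compactness of $\dom(r)$ from Assumption~\ref{ass: convexity}.
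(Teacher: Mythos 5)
Your proposal is correct and follows essentially the same route as the paper: the paper performs the identical three-way case split ($t_{k+1}>t_k$, $=t_k$, $<t_k$) via the law of total expectation over those events, discarding the first term as nonpositive and the second as zero, then bounding the third using $c\le t_k,t_{k+1}\le s$ and $\|x_{k+1}-x^*\|^2\le D_x^2$ together with $\bP\{t_{k+1}<t_k\}\le \bP\{t_{k+1}\neq t_k\}$. Your pointwise almost-sure inequality followed by taking expectations is just a slightly more streamlined packaging of the same argument, and your sign bookkeeping matches the paper's.
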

\begin{proof}
    For $k \in \N$, we define three events, $I_{k,1}, I_{k,2},$ and $I_{k,3}$, defined as
    $$I_{k,1}\, \triangleq\, \left\{\,t_{k+1}> t_k\, \right\}, I_{k,2}\, \triangleq \, \left\{\, t_{k+1} = t_k\, \right\}, \mbox{ and } I_{k,3}\, \triangleq\, \left\{\, t_{k+1} < t_k\, \right\}$$ respectively. Then by law of total expectation, we have
    \begin{align*}
        \E\left[\left\|x_{k+1} - x^*\right\|^2\left(\frac{1}{t_{k+1}} - \frac{1}{t_k}\right)\right] &= \underbrace{\E\left[\left\|x_{k+1} - x^*\right\|^2\left(\frac{1}{t_{k+1}} - \frac{1}{t_k}\right)\, \Big| \, I_{k,1}\right]}_{\, \le \, 0 } \bP[\, I_{k,1}\, ] \\
        & + \underbrace{\E\left[\left\|x_{k+1} - x^*\right\|^2\left(\frac{1}{t_{k+1}} - \frac{1}{t_k}\right)\, \Big| \, I_{k,2}\, \right]}_{\, = \, 0}\bP[\, I_{k,2}\, ] \\
        &+ \E\left[\left\|x_{k+1} - x^*\right\|^2\left(\frac{1}{t_{k+1}} - \frac{1}{t_k}\right)\, \Big| \, I_{k,3}\, \right]\bP[I_{k,3}] \\
        & \le  \E\left[\left\|\, x_{k+1} - x^*\, \right\|^2\left(\frac{1}{t_{k+1}} - \frac{1}{t_k}\right)\, \Big|\, I_{k,3}\, \right]\bP\left[\, I_{k,3}\, \right] \\
        & \le D_x^2 \left(\, \frac{1}{c} - \frac{1}{s}\, \right) \bP\left\{\, t_{k+1} \neq t_k\, \right\}, 
    \end{align*}
    where the last inequality arises from the bound for $\|x_{k+1}-x^*\|$, by invoking the lower and upper bounds of $t_{k+1}$ and $t_k$, respectively (both from Lemma \ref{lm.prox:t_lb}), and the relation of $\bP[\, I_{k,3}\, ] \le \bP\left\{\, t_{k+1} \neq t_k\, \right\}$.
\end{proof}         

\begin{theorem} \label{thm.convex.main}
\em
    For any $K \in \N\setminus\{0\}$, consider the sequence generated by Algorithm \ref{alg: sls pk prox convex} using $\{N_k\}$ and $\{p_j\}$ as defined in Theorem~\ref{thm.nonaymptotic constant nk pj}. Suppose Assumption \ref{ass:prox}-\ref{ass: convexity} hold. Suppose $\bar{x}_K$ is defined as $\bar{x}_K \, \triangleq \, \frac{1}{K} {\displaystyle \sum_{k=1}^{K}x_k}$.  
\begin{enumerate}[label=(\roman*)]
    \item For any $K \ge 1$,  
    \begin{align*}
& \quad  \E\left[\,  \phi(\bar{x}_K) - \phi_{\inf} \, \right]  \\    & \le \frac{D_x^2 }{2cK} + \frac{ \left(\frac{1}{2L} + 3Ls^2 \right)\sigma^2}{c_b K} \\
        &~~~+  \frac{\left(\frac{D_x^2}{2} \left(\, \frac{1}{c} - \tfrac{1}{s}\, \right)  +  3L(s-c)^2(L_r + 3B_g)^2\, \right)  (c_p+1) \left\lceil\left[\log_{1/\beta} \left(\frac{sL}{2(1-\alpha)} \right)\right]_{+} + 2 \right\rceil}{c_pK}.\\
    \end{align*}
The resulting iteration and sample-complexities that guarantee \eqref{eq.obj convergence measure} are 
  \begin{align*}   
        K_{\epsilon}  &= \mathcal{O}\left(\, \frac{D_x^2c^{-1} + (L^{-1}+ Ls^2)\sigma^2 /c_b}{\epsilon}  \, \right) \\
        &~~~ + \mathcal{O}\left(\, \frac{\left(D_x^2(c^{-1} - s^{-1}) + L(s-c)^2(L_r + B_g)^2\right) \left[\ln(Ls)\right]_{+} }{\epsilon} \, \right)
    \end{align*}
    and
    \begin{align*}
    S_\epsilon &= \mathcal{O}\left(\, \frac{D_x^4c^{-2}c_b + (L^{-1}+ Ls^2)^2\sigma^4/c_b}{\epsilon^2} \, \right) \\
    &\quad + \mathcal{O}\left(\, \frac{\left(D_x^2(c^{-1} - s^{-1}) + L(s-c)^2(L_r + B_g)^2\right)^2 \left(\left[\ln(Ls)\right]_{+}\right)^2c_b }{\epsilon^2} \, \right),
\end{align*}
respectively.
\item Suppose, in addition, $s = \frac{2(1-\alpha)\beta\zeta }{L}$ for some constant $\zeta \in (0,1]$. Then, for any $K \ge 1$, 
\begin{align*}
         \E\left[\,  \phi(\bar{x}_{K}) - \phi_{\inf} \, \right] = \mathcal{O}\left(\, \frac{D_x^2L + L^{-1}\sigma^2/c_b }{K} \, \right) 
    \end{align*}
and the iteration and sample-complexities that guarantee \eqref{eq.obj convergence measure} are 
 \begin{align*}   
        K_{\epsilon}  = \mathcal{O}\left(\, 
        \frac{ D_x^2L + L^{-1}\sigma^2/c_b}{\epsilon}\, \right), \text{ and }
        S_\epsilon = \mathcal{O}\left(\, 
        \frac{ D_x^4L^2 c_b + L^{-2}\sigma^4/c_b}{\epsilon^2} \, \right), \text{ respectively}.
    \end{align*}
\end{enumerate}
\end{theorem}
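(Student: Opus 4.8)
The plan is to run the classical ``three-point'' potential argument for proximal-gradient methods on convex problems, but with the sampled maps $\tf_k,\tg_k$ in place of $f,g$, and then to absorb the two error sources --- the oracle noise and the random, non-monotone steplength $\{t_k\}$ --- via Proposition~\ref{lm: errbd prox}, Lemma~\ref{lm: errbd convex}, and the resetting-frequency bound of Lemma~\ref{lem.T bound prox}. First I would fix a minimizer $x^* \in \dom(r)$ with $\phi(x^*) = \phi_{\inf}$ and, at each $k$, combine three ingredients at the point $x^*$: the convex linesearch condition~\eqref{eq.ls_convex} written as $\tf_k(x_{k+1}) \le \tf_k(x_k) + \tg_k(x_k)^\top(x_{k+1}-x_k) + \tfrac{1-\alpha}{t_k}\|x_k-x_{k+1}\|^2$; the prox inequality Lemma~\ref{lem.prox}(iii) applied at $x_k - t_k\tg_k(x_k)$ (whose prox is $x_{k+1}$) with test point $x^*$; and the splitting $\tg_k(x_k)^\top(x_{k+1}-x_k) = \tg_k(x_k)^\top(x_{k+1}-x^*) + \tg_k(x_k)^\top(x^*-x_k)$. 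This yields
\[
\tilde\phi_k(x_{k+1}) \le \big[\tf_k(x_k) + \tg_k(x_k)^\top(x^*-x_k) + r(x^*)\big] - \tfrac{1}{t_k}(x_k-x_{k+1})^\top(x^*-x_{k+1}) + \tfrac{1-\alpha}{t_k}\|x_k-x_{k+1}\|^2 .
\]
Expanding the inner product through the polarization identity and using $\alpha \ge \tfrac12$ to discard the resulting nonpositive multiple of $\|x_k-x_{k+1}\|^2$ leaves $\tilde\phi_k(x_{k+1}) \le [\tf_k(x_k) + \tg_k(x_k)^\top(x^*-x_k) + r(x^*)] + \tfrac{1}{2t_k}(\|x_k-x^*\|^2 - \|x_{k+1}-x^*\|^2)$.

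The only stochastic term surviving with sampled quantities is \emph{linear} and is evaluated at the $\sF_k$-measurable iterate $x_k$; taking $\E[\,\cdot\mid\sF_k\,]$, unbiasedness (Assumption~\ref{ass: var}) and convexity of $f$ (Assumption~\ref{ass: convexity}) give $\E[\tf_k(x_k)+\tg_k(x_k)^\top(x^*-x_k)\mid\sF_k] \le f(x^*)$, so that term plus $r(x^*)$ is at most $\phi_{\inf}$. Invoking Proposition~\ref{lm: errbd prox} to trade $\tilde\phi_k(x_{k+1})$ for $\phi(x_{k+1})$ up to the additive error $\big(\tfrac{1}{2L}+3Ls^2\big)\tfrac{\sigma^2}{N_k} + 3L(s-c)^2(L_r+3B_g)^2\,\bP\{t_k\neq t_{k-1}\}$, I would obtain $\E[\phi(x_{k+1})-\phi_{\inf}] \le \tfrac12\,\E\big[\tfrac{1}{t_k}\|x_k-x^*\|^2 - \tfrac{1}{t_k}\|x_{k+1}-x^*\|^2\big] + (\mathrm{error}_k)$. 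Summing $k=0,\dots,K-1$: the left side telescopes, while the right side is only \emph{almost} telescoping because the weight $1/t_k$ drifts; reindexing exposes $\tfrac{1}{2t_0}\|x_0-x^*\|^2 \le \tfrac{D_x^2}{2c}$ (using $t_0\ge c$ and Assumption~\ref{ass: convexity}) plus $\tfrac12\sum_k\big(\tfrac{1}{t_{k+1}}-\tfrac{1}{t_k}\big)\|x_{k+1}-x^*\|^2$, which is precisely what Lemma~\ref{lm: errbd convex} controls, hence at most $\tfrac{D_x^2}{2}\big(\tfrac1c-\tfrac1s\big)\sum_k\bP\{t_{k+1}\neq t_k\}$. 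Both $\sum_k\bP\{t_{k+1}\neq t_k\}$ and $\sum_k\bP\{t_k\neq t_{k-1}\}$ are at most $\E[T(K)] \le (J(K)+1)\lceil[\log_{1/\beta}(sL/(2(1-\alpha)))]_+ + 2\rceil$ by Lemma~\ref{lem.T bound prox}. Dividing by $K$ and applying Jensen's inequality to the convex $\phi$ at $\bar x_K = \tfrac1K\sum_{k=1}^K x_k$ gives the bound on $\E[\phi(\bar x_K)-\phi_{\inf}]$ in terms of $D_x^2/(cK)$, $\tfrac1K\sum_k 1/N_k$, and $\E[T(K)]/K$.

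For part~(i), substituting $N_k=\lceil c_bK\rceil$ (so $\sum_k 1/N_k\le 1/c_b$ by Lemma~\ref{lem.Nk sum}(i)) and $p_j=\lceil c_pK\rceil$ (so $J(K)+1\le 1/c_p+1$ by Lemma~\ref{lem.pj sum}\ref{item.pj1}) produces the displayed rate; $K_\epsilon=\mathcal O(\cdot/\epsilon)$ follows by inverting the $\mathcal O(1/K)$ bound and $S_\epsilon = K_\epsilon\lceil c_bK_\epsilon\rceil$ after $(a+b+c)^2\le 3(a^2+b^2+c^2)$. For part~(ii), the choice $s=2(1-\alpha)\beta\zeta/L$ with $\zeta\in(0,1]$ forces $c=\min\{s,2(1-\alpha)\beta/L\}=s$ in \eqref{eq:prox.t_bound}, so the terms $(s-c)^2$ and $(1/c-1/s)$ vanish identically and the bound collapses to $\mathcal O\big((D_x^2L + L^{-1}\sigma^2/c_b)/K\big)$, from which the stated complexities are immediate.

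I expect the main obstacle to be the very first step: orchestrating~\eqref{eq.ls_convex}, the prox inequality, and convexity so that the surviving stochastic term is linear in the \emph{past} iterate $x_k$ (where conditional unbiasedness bites) rather than in $x_{k+1}$, while simultaneously keeping the $\|x_k-x_{k+1}\|^2$ coefficient nonpositive --- which is exactly why the convex condition is stated with $\alpha\in[1/2,1]$ rather than with the nonconvex constant. Everything after that --- the non-telescoping bookkeeping and the parameter substitutions --- is routine given Lemmas~\ref{lem.T bound prox}, \ref{lm: errbd convex}, \ref{lem.Nk sum}, \ref{lem.pj sum} and Proposition~\ref{lm: errbd prox}.
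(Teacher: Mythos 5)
Your proposal is correct and follows essentially the same route as the paper's proof: the same three-point combination of the convex linesearch condition \eqref{eq.ls_convex} (with $\alpha\ge 1/2$ killing the $\|x_k-x_{k+1}\|^2$ term), Lemma~\ref{lem.prox}, conditional unbiasedness plus convexity of $f$, Proposition~\ref{lm: errbd prox} for the $\tilde\phi_k(x_{k+1})\to\phi(x_{k+1})$ swap, Lemma~\ref{lm: errbd convex} together with the $\E[T(K)]$ bound for the non-telescoping $1/t_k$ drift, and the final substitutions from Lemmas~\ref{lem.Nk sum} and~\ref{lem.pj sum}. The only difference is cosmetic bookkeeping (you start from the function-value side and polarize, whereas the paper expands $\|x_{k+1}-x^*\|^2$ first), so no further comment is needed.
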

\begin{proof}
   (i) For any $k \in \N$, by utilizing the update rule, we have
    \begin{align*}
        \left\|\, x_{k+1} - x^*\, \right\|^2 &= \left\| \, x_{k+1} - x_k + x_k -x^* \,\right\|^2 \\ 
        & = \|  x_{k+1} - x_k \|^2 +  \|  x_k - x^* \|^2 + 2(x_{k+1}- x_k)^\top (x_k - x^*) \\
        & = \|  x_{k+1} - x_k \|^2 +  \|  x_k - x^* \|^2 + 2(x_{k+1}- x_k + t_k \tg_k(x_k) - t_k \tg_k(x_k) )^\top(x_k - x^*) \\
        & = \|  x_{k+1} - x_k \|^2 +  \|  x_k - x^* \|^2 + 2(x_{k+1}- x_k + t_k \tg_k(x_k) )^\top(x_k - x_{k+1} + x_{k+1} - x^*) \\
        & - 2 t_k \tg_k(x_k)^\top (x_k - x^*) \\
        & = \|  x_k - x^* \|^2 - \|  x_{k+1} - x_k \|^2 + 2t_k\tg_k(x_k)^\top(x_k - x_{k+1} ) - 2 t_k \tg_k(x_k)^\top (x_k - x^*) \\
        & + 2(x_{k+1}- x_k + t_k \tg_k(x_k) )^\top( x_{k+1} - x^*)\\
        & \le \|  x_k - x^* \|^2 - \|  x_{k+1} - x_k \|^2 + 2t_k\tg_k(x_k)^\top(x_k - x_{k+1} ) - 2 t_k \tg_k(x_k)^\top (x_k - x^*) \\
        & + 2t_k \left(r(x^*) - r(x_{k+1})\right)
    \end{align*} 
where the last inequality is a consequence of Lemma~\ref{lem.prox}.
From the above relation, condition \eqref{eq.ls_convex}, and $\alpha \in [1/2,1]$, we have
    \begin{align*}
        & \quad r(x_{k}) - r(x^*) + \tg_k(x_k)^\top(x_k - x^*) \\
        & \le \frac{\|x_k - x^*\|^2}{2t_k} - \frac{\|x_{k+1} - x^*\|^2}{2t_k} + \tg_k(x_k)^\top(x_k - x_{k+1} ) - \frac{ \| x_{k+1} - x_k \|^2}{2t_k} + r(x_k) - r(x_{k+1}) \\
 & \le \frac{\|x_k - x^*\|^2}{2t_k} - \frac{\|x_{k+1} - x^*\|^2}{2t_k} + \tf_k(x_k) - \tf_k(x_{k+1}) + \left(\frac{1-2\alpha}{2t_k} \right) \| x_{k+1} - x_k \|^2 + r(x_k) - r(x_{k+1}) \\
 & \le \frac{\|x_k - x^*\|^2}{2t_k} - \frac{\|x_{k+1} - x^*\|^2}{2t_k} + \tilde\phi_k(x_k) - \tilde\phi_k(x_{k+1}). 
    \end{align*}
    It follows that
     \begin{align*}
    \phi(x_k) - \phi_{\inf} & = f(x_k) - f(x^*) + r(x_{k}) - r(x^*) \\
    & \le r(x_{k}) - r(x^*) + g_k(x_k)^\top(x_k - x^*) \\
    & \le  \frac{\|x_k - x^*\|^2}{2t_k} - \frac{\|x_{k+1} - x^*\|^2}{2t_k} + \tilde\phi_k(x_k) - \tilde\phi_k(x_{k+1}).
    \end{align*}
    Taking expectations conditional on $\mathcal{F}_k$ on both sides and by invoking the convexity of $f$, we have
    \begin{align*}
    \E \left[\, \phi(x_k) - \phi_{\inf} \, \mid \, \mathcal{F}_k \, \right] & \le \E\left[\frac{\|x_k - x^*\|^2}{2t_k} - \frac{\|x_{k+1} - x^*\|^2}{2t_k}\,\Big|\,\mathcal{F}_k\right] + \E \left[\, \tilde \phi_k(x_k) \, \mid \, \mathcal{F}_k \, \right] - \E\left[\tilde\phi_k(x_{k+1}) \ \Big| \mathcal{F}_k \right]\\
    & = \E\left[\frac{\|x_k - x^*\|^2}{2t_k} - \frac{\|x_{k+1} - x^*\|^2}{2t_k}\,\Big|\,\mathcal{F}_k\right] + \phi(x_k) - \E\left[\tilde\phi_k(x_{k+1}) \ \Big| \mathcal{F}_k \right]. 
    \end{align*}   
    Taking unconditional expectations on both sides and by invoking Proposition \ref{lm: errbd prox}, we have
    \begin{align*}
        \E\left[\, \phi(x_k) - \phi_{\inf} \,\right] & \le \E\left[\, \frac{\|x_k - x^*\|^2}{2t_k} - \frac{\|x_{k+1} - x^*\|^2}{2t_k}\, \right] +  \E\left[\, \phi(x_k)\, \right] - \E\left[\, \phi(x_{k+1})\, \right] \\
        & \quad +  \E\left[\, \phi(x_{k+1})\, \right] - \E\left[\, \tilde\phi_{k}(x_{k+1})\, \right]\\
        &\le  \E\left[\, \frac{\|x_k - x^*\|^2}{2t_k} - \frac{\|x_{k+1} - x^*\|^2}{2t_k}\, \right] +  \E[\, \phi(x_k)\, ] - \E[\, \phi(x_{k+1}) \, ] \\
        &\quad + \left(\frac{1}{2L} + 3Ls^2\right)\frac{\sigma^2}{N_k} + 3L(s-c)^2(L_r + 3B_g)^2 \bP\left\{\, t_{k} \neq t_{k-1}\, \right\} \\
      \implies \E\left[\, \phi(x_{k+1}) - \phi_{\inf} \,\right] &\le  \E\left[\, \frac{\|x_k - x^*\|^2}{2t_k} - \frac{\|x_{k+1} - x^*\|^2}{2t_k}\, \right] \\
        &\quad + \left(\frac{1}{2L} + 3Ls^2\right)\frac{\sigma^2}{N_k} + 3L(s-c)^2(L_r + 3B_g)^2 \bP\left\{\, t_{k} \neq t_{k-1}\, \right\}.
    \end{align*}
    By telescoping, Lemma \ref{lm: errbd convex}, $\phi(x) \ge \phi_{\inf}$ for any $x \in \dom(r)$, the {compactness of $\dom(r)$} in Assumption \ref{ass: convexity}, \eqref{eq.p to T}, Lemma~\ref{lem.Nk sum} and Lemma~\ref{lem.pj sum}, we have
    \begin{align*}
        & \quad \frac{1}{K}\sum_{k=0}^{K-1}\E\left[\, \phi(x_{k+1}) - \phi_{\inf} \,\right] \\ 
        &\le  \frac{1}{K}\sum_{k=0}^{K-1}\E\left[\frac{\|x_k - x^*\|^2}{2t_k} - \frac{\|x_{k+1} - x^*\|^2}{2t_{k+1}} + \frac{\|x_{k+1} - x^*\|^2}{2t_{k+1}} - \frac{\|x_{k+1} - x^*\|^2}{2t_k} \right] \\
        &\quad + \frac{1}{K}\sum_{k=0}^{K-1}\left(\left(\tfrac{1}{2L} + 3Ls^2\right)\tfrac{\sigma^2}{N_k} + 3L(s-c)^2(L_r + 3B_g)^2 \bP\{t_{k} \neq t_{k-1}\}\right)\\
        & =  \frac{1}{K}\left(\, \E\left[\, \frac{\|x_0 - x^*\|^2}{2t_0}\, \right] - \E\left[\, \frac{\|x_{K} - x^*\|^2}{2t_{K}}\, \right] \right. \\
            & \left. ~~~~~~+ \sum_{k=0}^{K-1}\E\left[\, \frac{\|x_{k+1} - x^*\|^2}{2t_{k+1}} - \frac{\|x_{k+1} - x^*\|^2}{2t_k} \, \right] \, \right) \\
        &\quad + \frac{1}{K}\sum_{k=0}^{K-1}\left(\, \left(\, \frac{1}{2L} + 3Ls^2\, \right)\frac{\sigma^2}{N_k} + 3L(s-c)^2(L_r + 3B_g)^2 \bP\left\{\, t_{k} \neq t_{k-1}\, \right\}\, \right)\\
        & \le  \frac{1}{K}\left(\, \frac{D_x^2}{2c} + \sum_{k=0}^{K-1}\left(\, \frac{D_x^2}{2} \left(\frac{1}{c} - \frac{1}{s}\, \right) \bP\left\{\, t_{k+1} \neq t_k\, \right\} \, \right) \,\right) \\
       &\quad + \frac{1}{K}\sum_{k=0}^{K-1}\left(\left(\tfrac{1}{2L} + 3Ls^2\right)\frac{\sigma^2}{N_k} +3L(s-c)^2(L_r + 3B_g)^2 \bP\{t_{k} \neq t_{k-1}\} \right) \\
       & \le \frac{D_x^2}{2cK} + \frac{ \left(\frac{1}{2L} + 3Ls^2 \right)\sigma^2}{c_b K} \\
        &~~~+  \frac{\left(\frac{D_x^2}{2} \left(\, \frac{1}{c} - \tfrac{1}{s}\, \right)  + 3L(s-c)^2(L_r + 3B_g)^2\, \right)  (c_p+1) \left\lceil\left[\log_{1/\beta} \left(\frac{sL}{2(1-\alpha)} \right)\right]_{+} + 2 \right\rceil}{c_pK}.
    \end{align*}
    The result follows by leveraging the convexity of $\phi$ and employing the averaged iterate $\bar{x}_K = \frac{1}{K}{\displaystyle \sum_{k=1}^{K} x_k}$.
    (ii) The result follows immediately with $s = c = \frac{2(1-\alpha)\beta\zeta}{L}$.
\end{proof}

\begin{remark}\em
\noindent (I) By \cite[Theorem 2.1]{Ghadimi2013}, in the convex setting, the standard SGD scheme with a constant steplength of $\mathcal{O}(1/L)$ and a batch-size of $\mathcal{O}(1)$ is characterized by a rate guarantee of $\E[\phi(\bar{x}_K) - \phi_{\inf}] \le \frac{L\|x_0 - x^*\|^2}{K} + L^{-1}\sigma^2$. This is comparable with our result in Theorem~\ref{thm.convex.main}(ii) by using a batch-size of $\mathcal{O}(1)$, 
i.e., $c_b = \mathcal{O}(1/K)$.\\ 

\noindent (II) Notably, our guarantees in the convex regime are free of any form of interpolation condition~\cite{Vaswani2019}, allowing for contending with a far broader class of problems.  $\hfill \Box$
\end{remark}

\section{Numerical results} \label{sec:5}
In this section, we conduct an expansive set of numerical tests and comparisons with competing solution methods such as $\texttt{SGD}$ and $\texttt{Adam}$. We begin in Section~\ref{sec:5.1} by discussing the set of schemes being compared and prescribing the algorithm parameters in each instance. Existing algorithms have to be tuned extensively, often for each problem class and instance, and untuned schemes can perform significantly worse than their tuned counterparts, as observed in Section~\ref{sec:5.2}. We then compare the performance of \texttt{SLAM} with competing schemes on a stochastic variant of the Rosenbrock function (Section~\ref{sec:5.3}), a set of ML applications (Section~\ref{sec:5.4}) and a nonconvex two-stage stochastic programming problem (Section~\ref{sec:5.5}). We conclude this section with Section~\ref{sec:5.6} where we show that the performance of our proposed scheme is relatively robust to choice of cycle lengths. 
\subsection{Related computational schemes}\label{sec:5.1}
We compare the following schemes in our numerical studies. Of these, the first scheme is \texttt{SLAM} with $s=1$ and a constant cycle length of $50$, i.e. $p_j = 50$ for all $j$. The set of schemes being compared include \texttt{sls\_0}, \texttt{sls\_2} (see \cite{Vaswani2019} for both schemes), \texttt{SGD} with constant and diminishing steplengths and \texttt{Adam} \cite{Kingma2017a}. In the case of the latter three schemes, we tune the appropriate stepsizes in these algorithms and discuss the impact of tuning in the next subsection. These schemes and their settings are as follows.
\begin{enumerate}[label=(\arabic*),parsep=0pt,itemsep=1pt]
    \item \texttt{slam\_1\_p50} (our scheme): SLAM with parameters $s=1$, $\{p_j\}=50$;
    \item \texttt{sls0\_1}: \cite[Algorithm 1, \texttt{opt=0}]{Vaswani2019} with $\eta_{\max}=1$  (this is equivalent to SLAM with $s=1$, $p_0=K$);
    \item \texttt{sls2\_1}: \cite[Algorithm 1, \texttt{opt=2}]{Vaswani2019}  with $\gamma=2$, $\eta_{\max}=10$ (for finite sample spaces);
    \item \texttt{sgd\_const\_tuned}: mini-batch SGD with tuned constant stepsize;
    \item \texttt{sgd\_dimin\_tuned}: mini-batch SGD with diminishing stepsize $t_k = s/\sqrt{k+1}$, where initial stepsize $s$ is tuned;
    \item \texttt{adam\_tuned}: Adam \cite{Kingma2017a} with $\beta_1 = 0.9$, $\beta_2 = 0.999$, $\epsilon = 10^{-8}$, the default setting in \cite{Kingma2017a} and the constant stepsize is tuned. 
\end{enumerate}
With regard to the linesearch methods \texttt{slam\_1\_p50}, \texttt{sls0\_1}, and \texttt{sls2\_1}, we set the parameters $\alpha = 0.1$ and $\beta = 0.9$. We use an identical constant batch size schedule $\{N_k\}$ across the three schemes. The batch-size and iteration budget may depend on the number of variables and will be indicated where appropriate. 

\subsection{Tuning \texttt{SGD} and \texttt{Adam}}\label{sec:5.2} Motivated by the external tuning framework in \cite{dahlBenchmarkingNeuralNetwork2023}, we design a simple tuning procedure for the stepsizes for \texttt{sgd\_const\_tuned}, \texttt{sgd\_dimin\_tuned}, and \texttt{adam\_tuned} are tuned as follows. For each problem, the scheme is run for a given stepsize in the finite set $\left\{10^{-5}, 10^{-4}, 10^{-3}, 10^{-2}, 10^{-1}, 1 \right\}$ for one-fifth of the total iteration budget. The stepsize that yields the smallest final mean objective value over five runs is then selected. In short, the tuning effort slightly exceeds the computational effort in solving the original problem. The total iteration budget differs across problems and will be specified in the subsequent problem descriptions. This tuning stage is crucial for the performance of these methods, as an arbitrary choice of stepsize may lead to poor results. This is captured by Figure~\ref{fig:tunestepsize}, which illustrates the objective values obtained with different stepsizes for \texttt{sgd\_const\_tuned} and \texttt{adam\_tuned} on three problems: \texttt{Rosenbrock} \eqref{eq:rosenbrock}, logistic regression with dataset \texttt{rcv1}, and Two-stage SP \eqref{2stage}. It can be observed that the algorithm performance is highly sensitive to the choice of stepsize
 and the best stepsize varies across problems.
\begin{figure}[H]
    \centering
    \begin{tabular}{ccc}
    \texttt{Rosenbrock} $(n=50)$ & \texttt{rcv1} & \texttt{TwoStageSP} $(n=50)$ \\
    \includegraphics[width=0.23\textwidth]{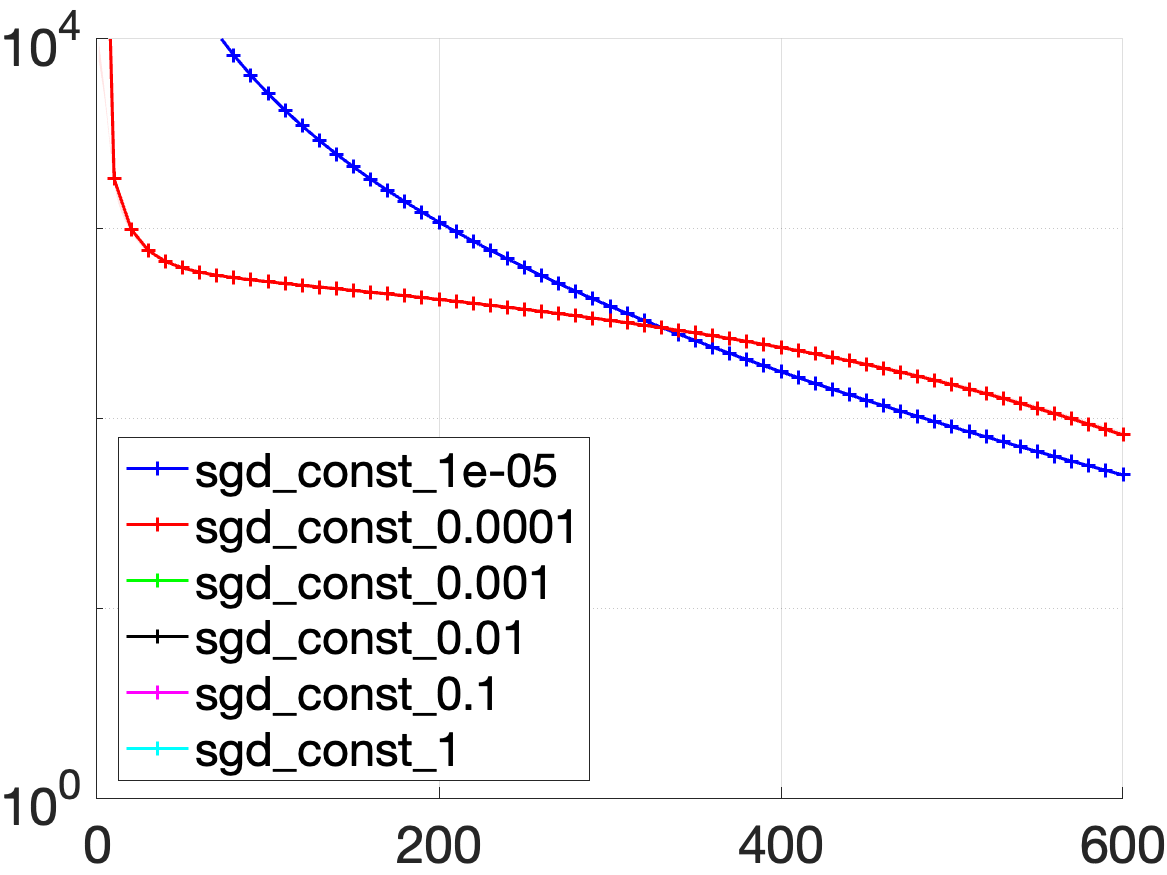}& 
    \includegraphics[width=0.23\textwidth]{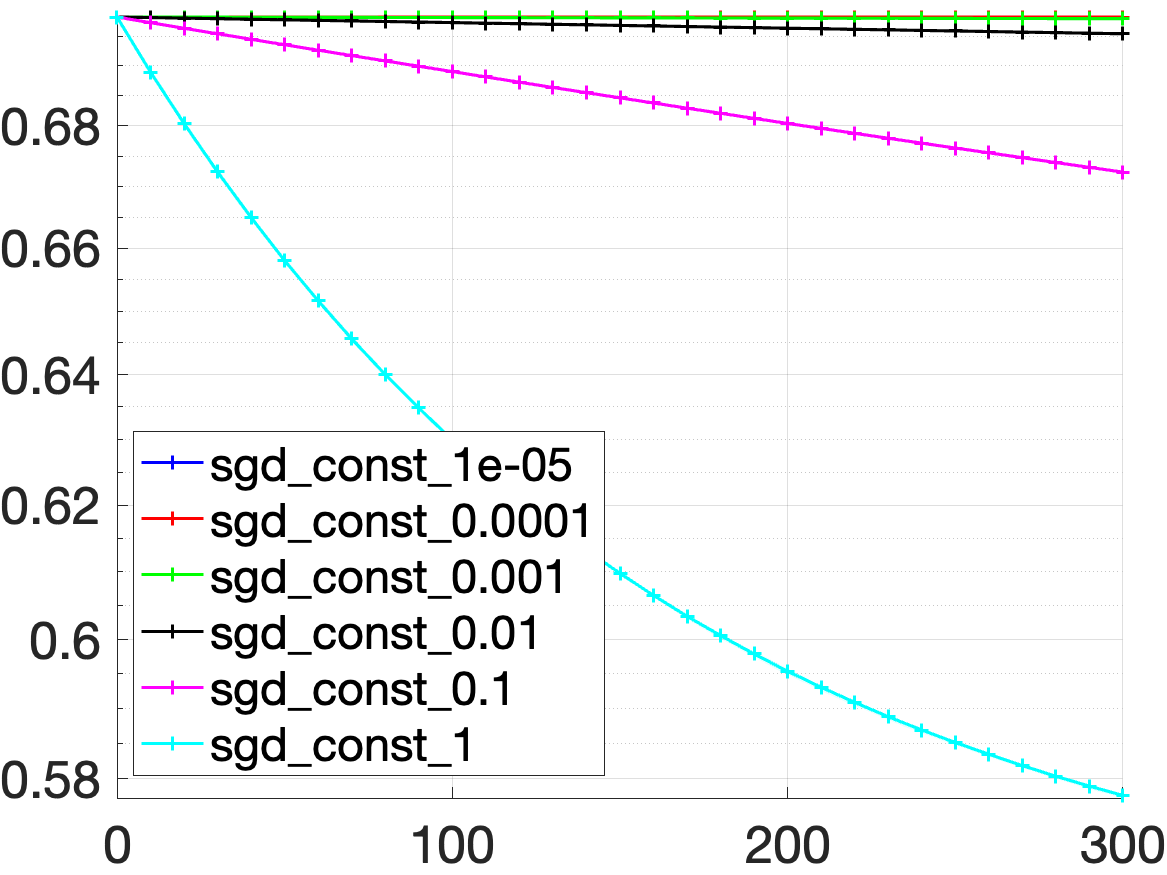}& 
    \includegraphics[width=0.23\textwidth]{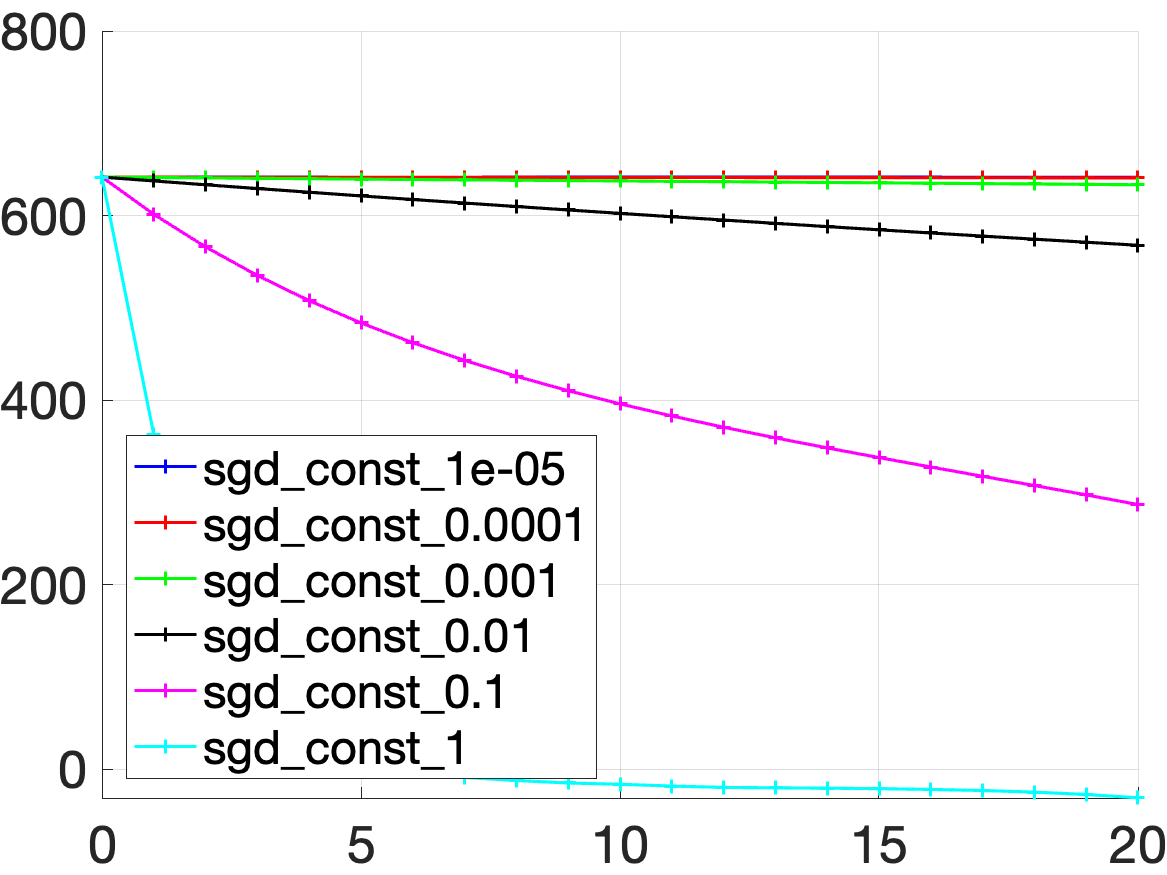}
    \\
    \includegraphics[width=0.23\textwidth]{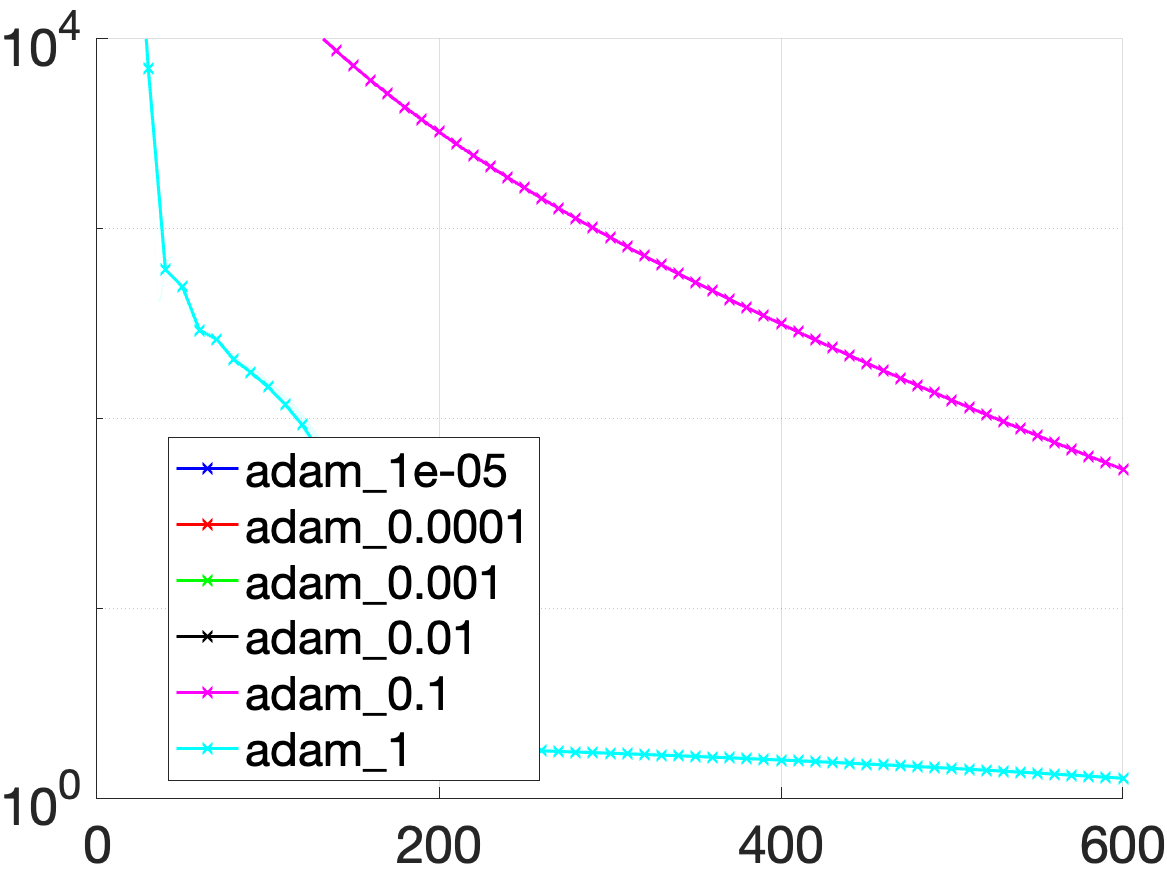} &
    \includegraphics[width=0.23\textwidth]{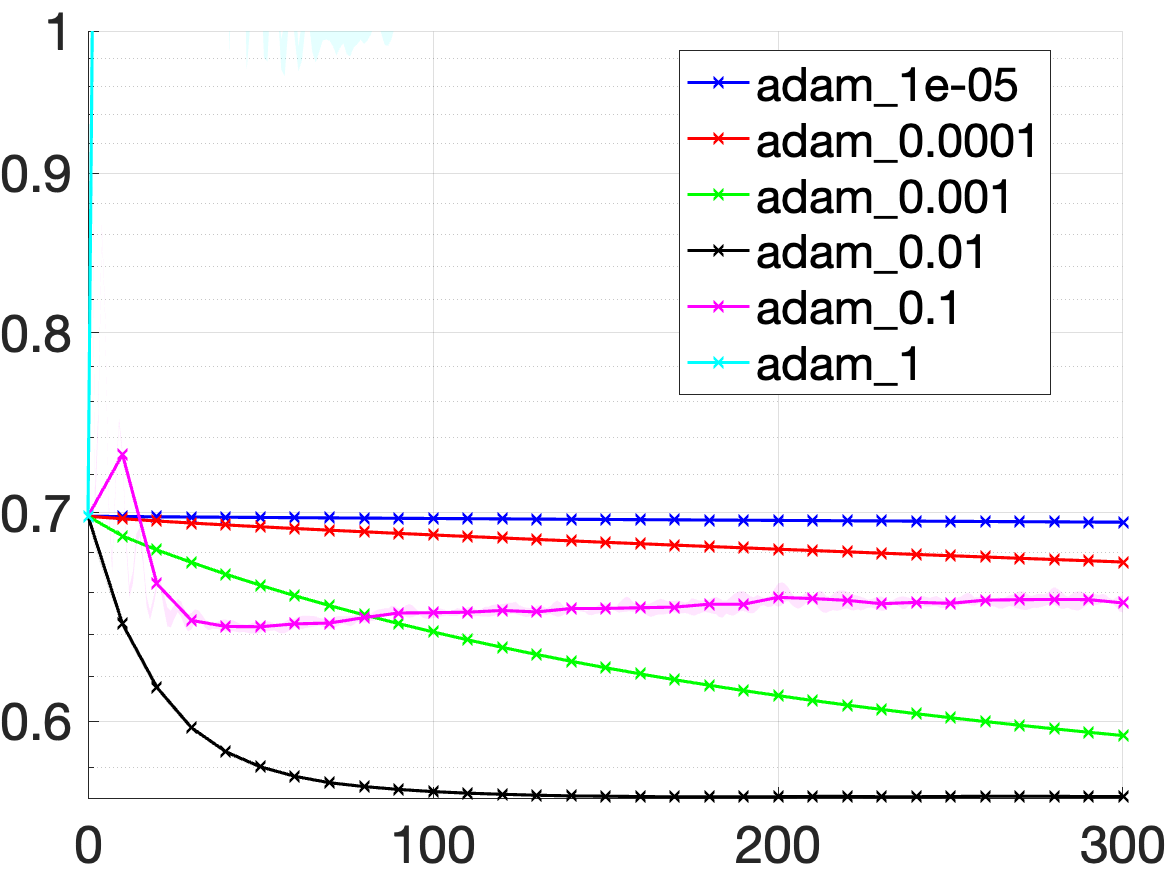} &
    \includegraphics[width=0.23\textwidth]{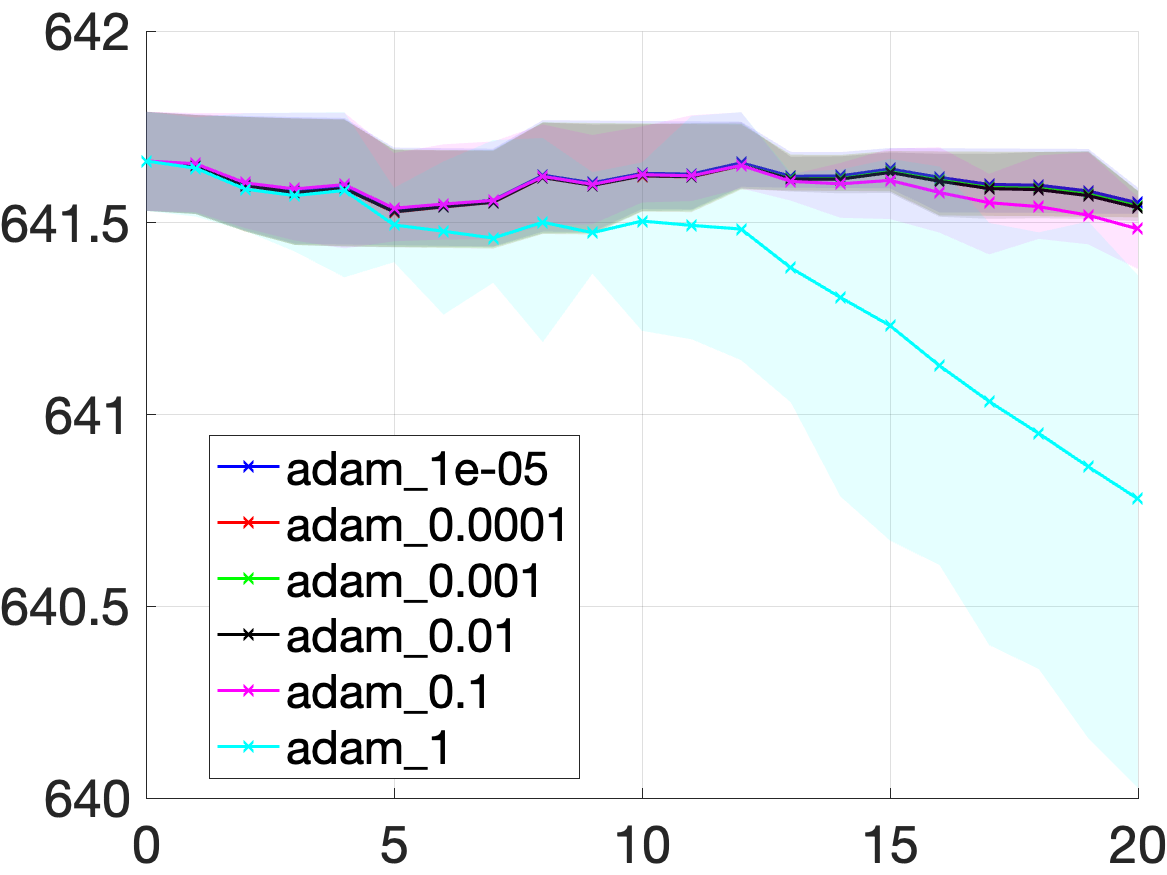}
    \end{tabular}
\caption{objective values of different initial stepsizes for \texttt{sgd\_const\_tuned} (top row) and \texttt{adam\_tuned} (bottom row) on three problems.} \label{fig:tunestepsize}
\end{figure}
Several points deserve emphasis when studying the results of tuning. First, the impact of tuning is quite dramatic. For instance, the choice of $s=1$ vs $s=0.1$ leads to significantly different behavior for \texttt{Adam} when applied to \texttt{rcv1}. In fact, for other choices of $s$, the resulting trajectories do not even emerge in the graph (implying far poorer performance). Second, the schemes have to be tuned for each problem and results from one problem may have little bearing on the tuning for another problem. Third, we \underline{employ the {\bf same} parameter choices of \texttt{SLAM} for all problems} and at the cost of repeating ourselves, these choices do not necessitate knowledge of $L$. 

\subsection{Multivariate stochastic Rosenbrock function}\label{sec:5.3}
We consider the following Rosenbrock function
\begin{align} 
    f(x) = \E_{\bxi\sim \mathcal{N}(0,10^2)}[F(x, \bxi)], \mbox{ where } F(x, \bxi) =  \sum_{i=1}^{n-1} \left[ (100 + \bxi)(x_{i+1} - x_i^2)^2 + (1-x_i)^2 \right] \label{eq:rosenbrock}
\end{align}
Indeed, the expectation can be computed explicitly, which is 
\begin{align*}
    f(x) &= \sum_{i=1}^{n-1} \left[ 100(x_{i+1} - x_i^2)^2 + (1-x_i)^2 \right],
\end{align*}
and it is known that the global solution is $x^* = (1, \ldots, 1)^\top$ and $f(x^*) = 0$ for any dimension $n$. We use the unbiased stochastic gradient and objective estimator of the function $f$, defined as \eqref{eq:rosenbrock}, to test the performance of our proposed algorithms. We test the scheme on dimension $n$ varying in the set $\{2, 10, 50, 100\}$. Further, 
all schemes employ the same batch size schedule $N_k = 128$ for all $k$. The initial point is set to $(x_0)_{i=1}^n = 6$. The iteration budget is given by $K = 1500$ for $n=2,10$, $K=3000$ for $n=50$, and $K=6000$ for $n=100$. The true objective and the squared norm of the gradient over iterations of each scheme are shown in Figure~\ref{fig:rosenbrock}. Surprisingly, some of the schemes appear to perform poorly on most instances of this problem (a relatively standard problem in nonlinear programming, including tuned versions of \texttt{SGD} and \texttt{sls\_0}. Our scheme, \texttt{slam\_1\_p50}, outperforms others in terms of objective and gradient convergence in most cases, achieving function values of the order of $1$e-$8$ in the larger problem settings, except for $n = 100$, where \texttt{SLAM} is seen to be slightly slower than \texttt{adam-tuned} when $n=100$.
\begin{figure}[H]
    \centering
    \begin{tabular}{cccc}
    $n=2$ & $10$ & $50$ & $100$ \\
    \includegraphics[width=0.23\textwidth]{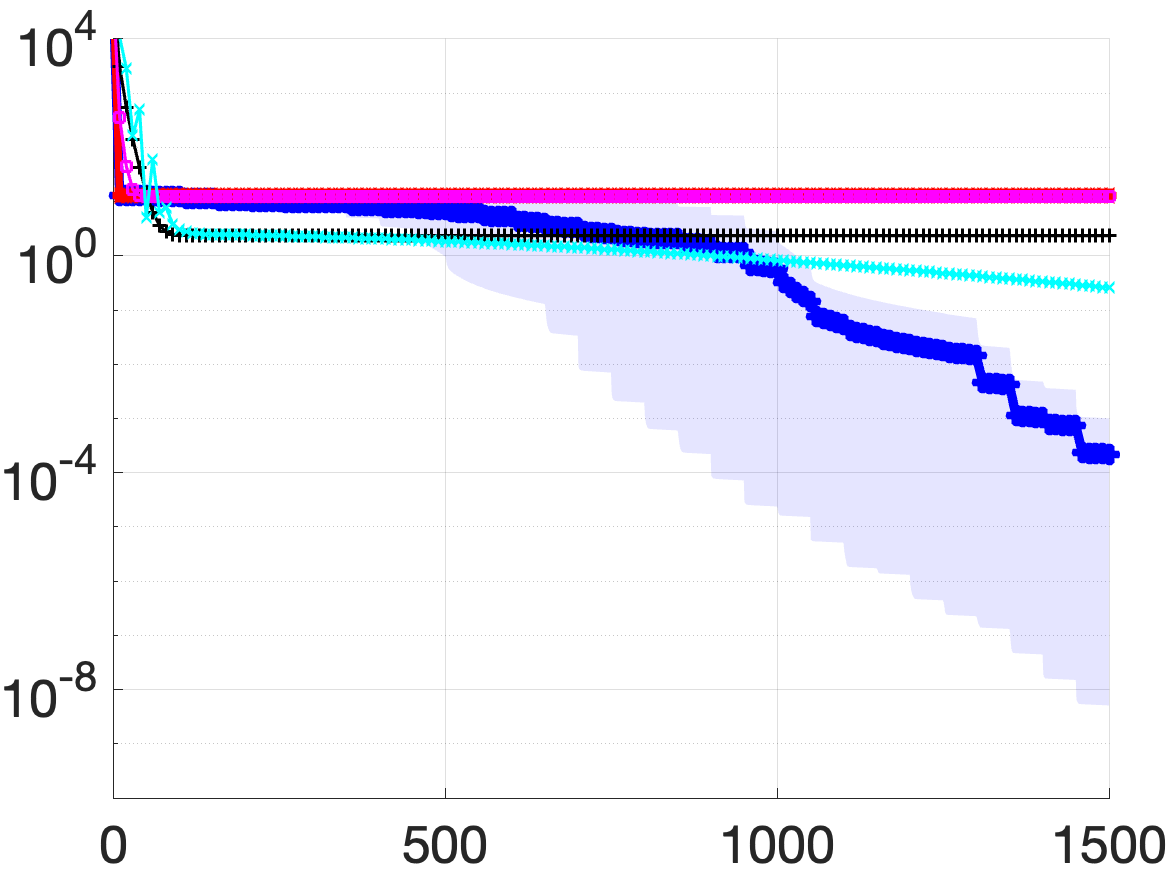}&
    \includegraphics[width=0.23\textwidth]{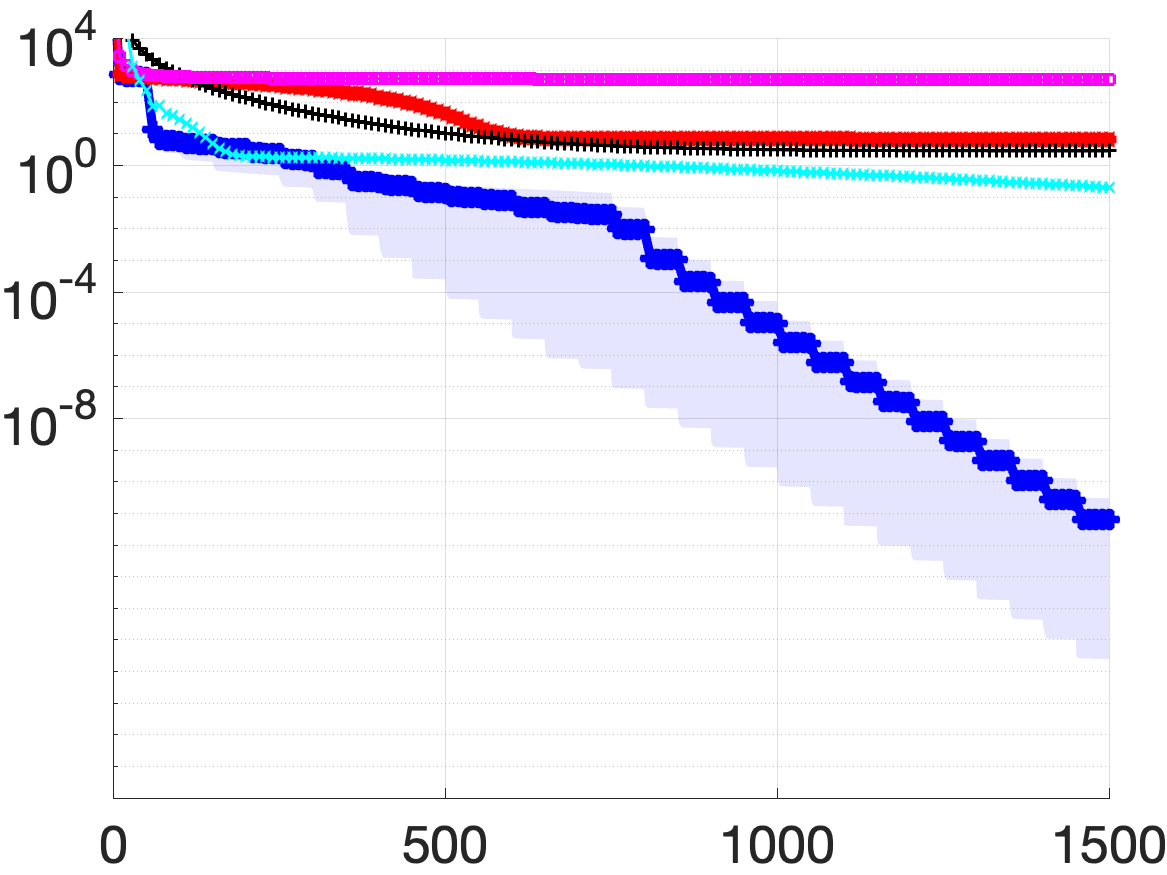}&
    \includegraphics[width=0.23\textwidth]{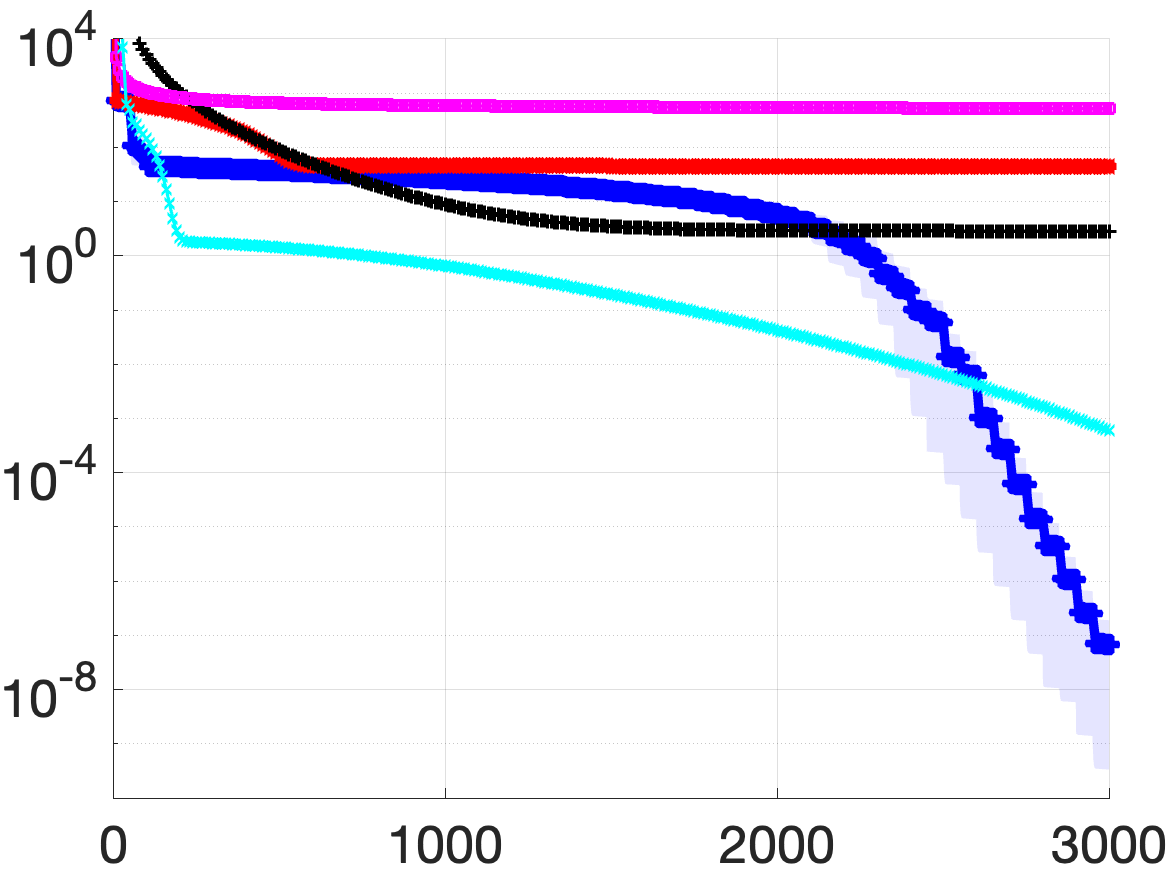}&
    \includegraphics[width=0.23\textwidth]{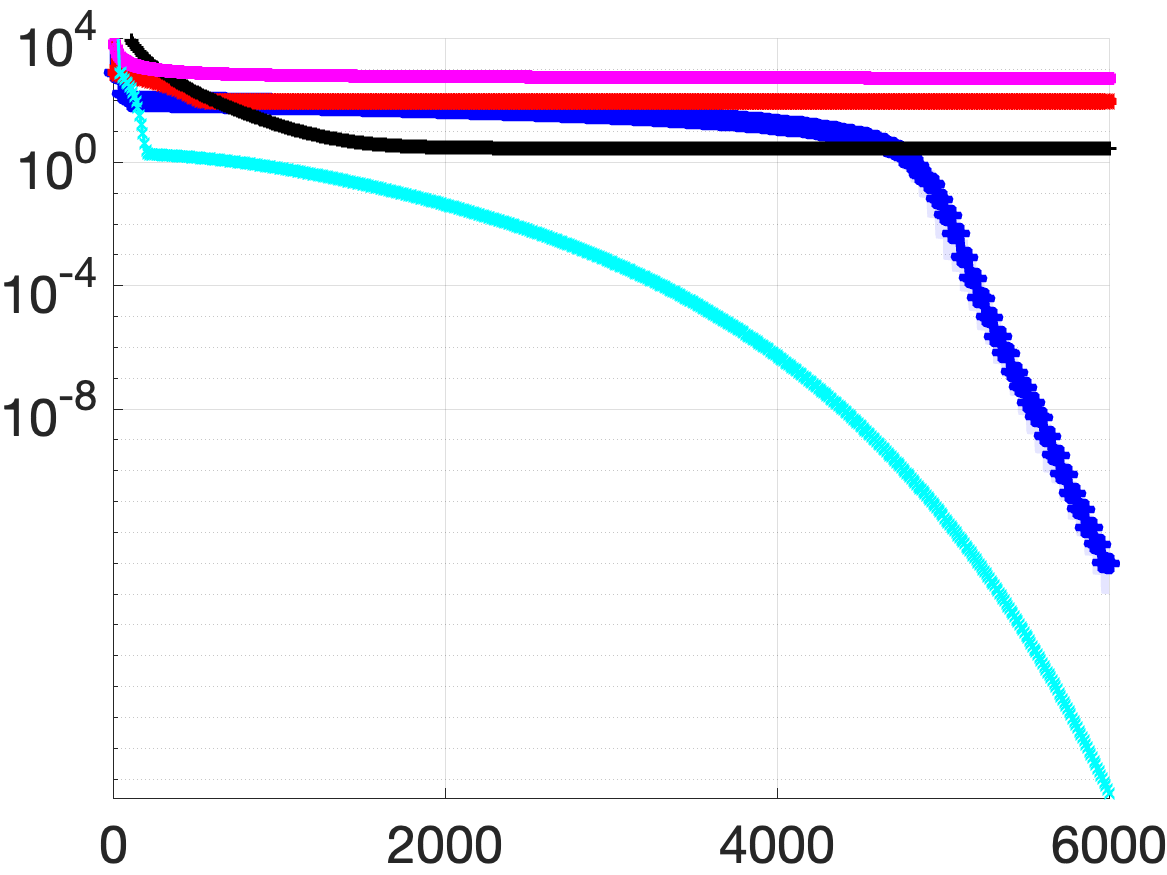}\\
    \includegraphics[width=0.23\textwidth]{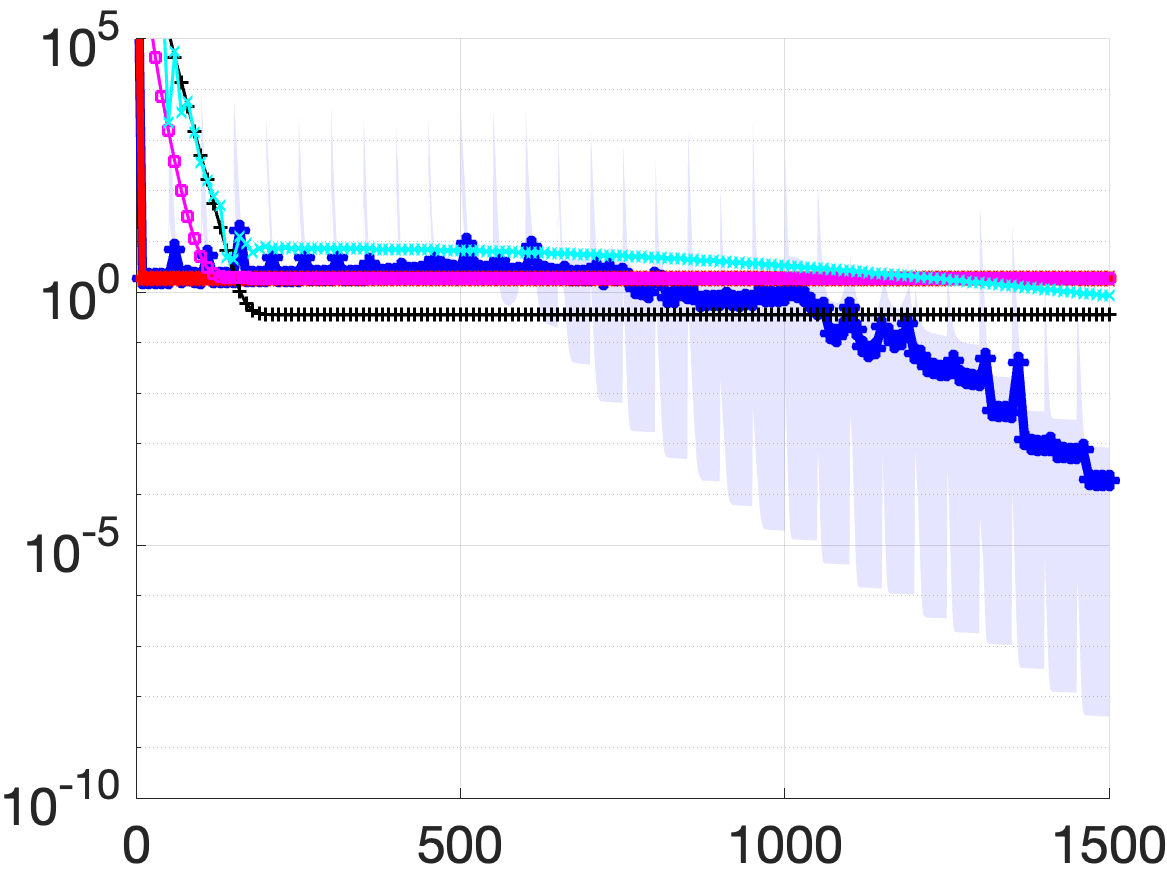}&
    \includegraphics[width=0.23\textwidth]{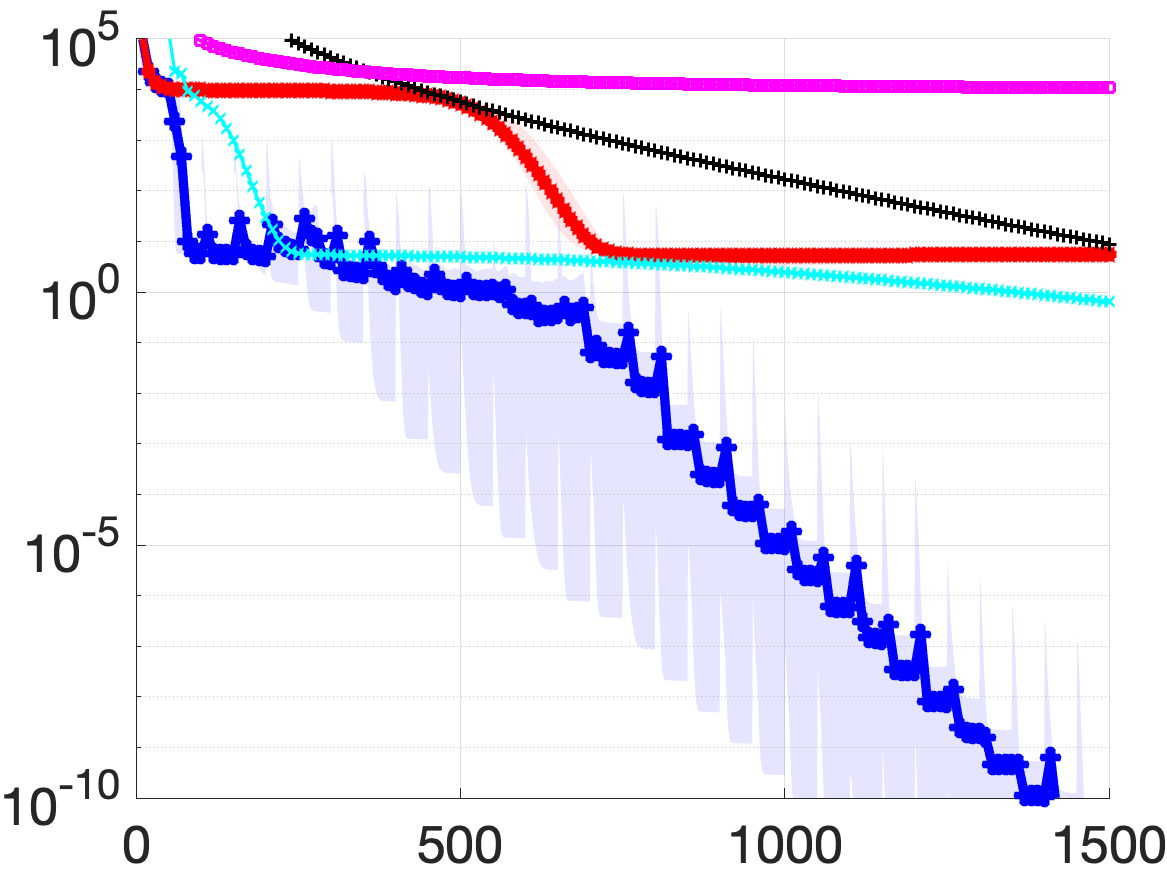}&
    \includegraphics[width=0.23\textwidth]{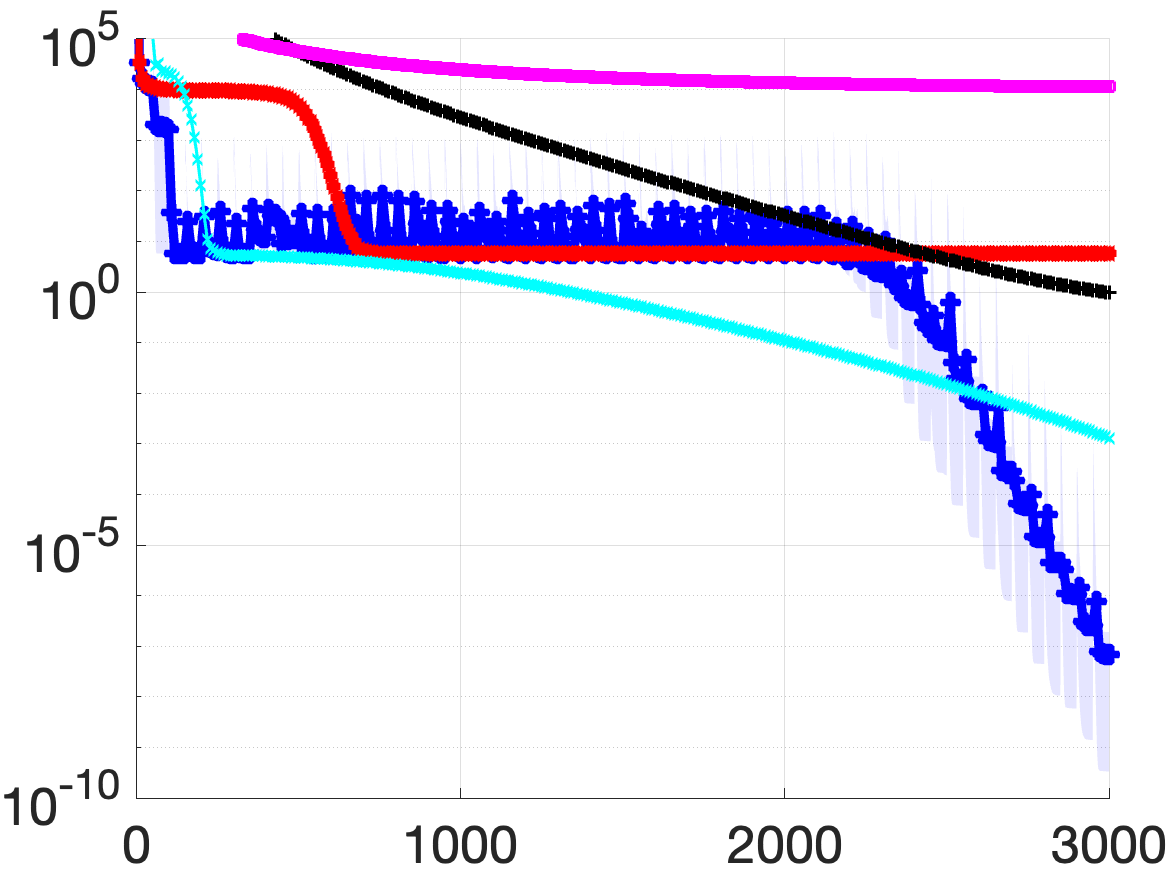}&
    \includegraphics[width=0.23\textwidth]{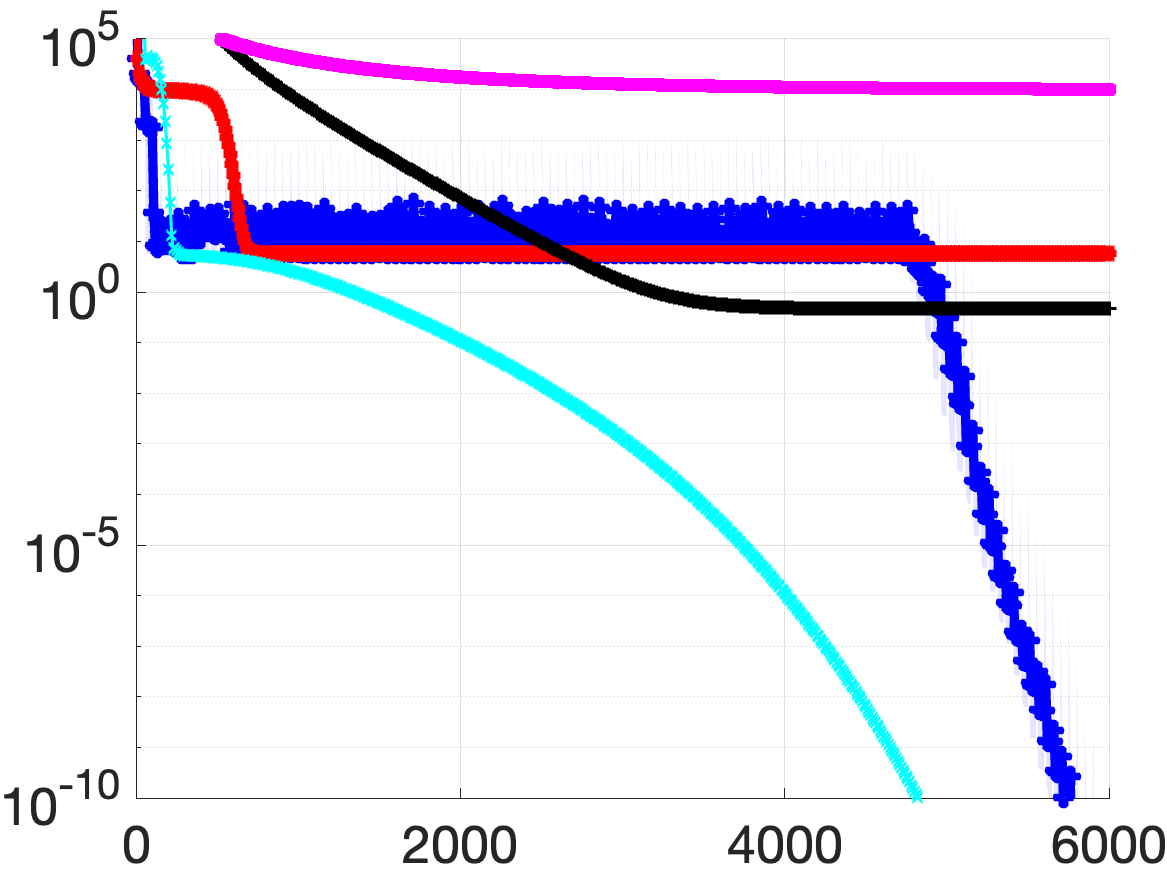}
    \end{tabular}
    \includegraphics[width=0.8\textwidth]{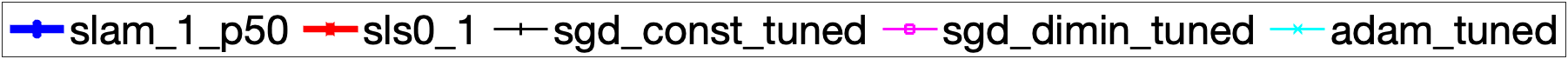}
    \caption{Numerical results of Rosenbrock function with different dimensions. The top and bottom rows are each for true $f(x_k)$, and $\|g(x_k)\|^2$ (average over 5 runs) versus iteration $k$. The scheme \texttt{sls2} is not included in the comparison since it is implemented for finite sample space problems.}
    \label{fig:rosenbrock}
\end{figure}

\subsection{Machine learning applications}\label{sec:5.4}
In this subsection, we evaluate the proposed method on the training of machine learning models.  First, we test on regularized logistic regression on three LIBSVM datasets \cite{Chang2011}   \texttt{duke}, \texttt{ijcnn1}, and \texttt{rcv1} for the binary classification task. The objective function is $f(x) = \tfrac{1}{m}\sum_{i=1}^m \log(1 + \exp(-y_i \cdot a_i^\top x)) + 0.001\|x\|_2^2$, where $y_i \in \{-1, 1\}$ and $\{y_i, a_i\}_{i=1}^m$ are the labels and features from the datasets. Second, we train a one-hidden-layer multilayer perceptron (MLP) on the \texttt{CIFAR-10} dataset \cite{Krizhevsky2009} for image classification. The dataset contains 50,000 training samples of $32 \times 32$ color images across 10 classes. We use cross-entropy as the loss function. The input layer has dimension $32\times32\times3$, and the hidden layer contains 128 neurons. The activation functions for the first and hidden layers are \texttt{tanh} and {\texttt{softmax}}, respectively. 

A constant batch size of 16 is used for \texttt{duke} problem (\texttt{duke}'s full batch is only 44), and $128$ for all other problems. The iteration budget is $K = 1500$. The initial points are randomly generated and fixed for each problem. The true objective and gradient norm square over iterations of each scheme are shown in Figure~\ref{fig:ml}. Our scheme, \texttt{slam\_1\_p50}, achieves the best performance on \texttt{duke}, \texttt{ijcnn1}, and \texttt{rcv1}, and remains competitive in \texttt{cifar10-mlp} in objective convergence. Note that for \texttt{ijcnn1} and \texttt{rcv1}, three schemes of \texttt{slam\_1\_p50}, \texttt{sls0\_1}, and \texttt{sgd\_const\_tuned} indeed exhibit identical performance and tied for the best one, as they all adopt a unit step size $t_k = 1$ throughout the iterations, as shown in the bottom row of Figure~\ref{fig:ml}. Generally, \texttt{SLAM} is seen to be amongst the best performing over the entirety of the problem types, outperforming solvers such as \texttt{Adam} in the first three problem types. From Figure~\ref{fig:tunestepsize}, it is apparent that role of tuning is crucial in such settings. 

\begin{figure}[H]
    \centering
    \begin{tabular}{cccc}
    \texttt{duke} & \texttt{ijcnn1} &  \texttt{rcv1} & \texttt{cifar10-mlp} \\
    \includegraphics[width=0.23\textwidth]{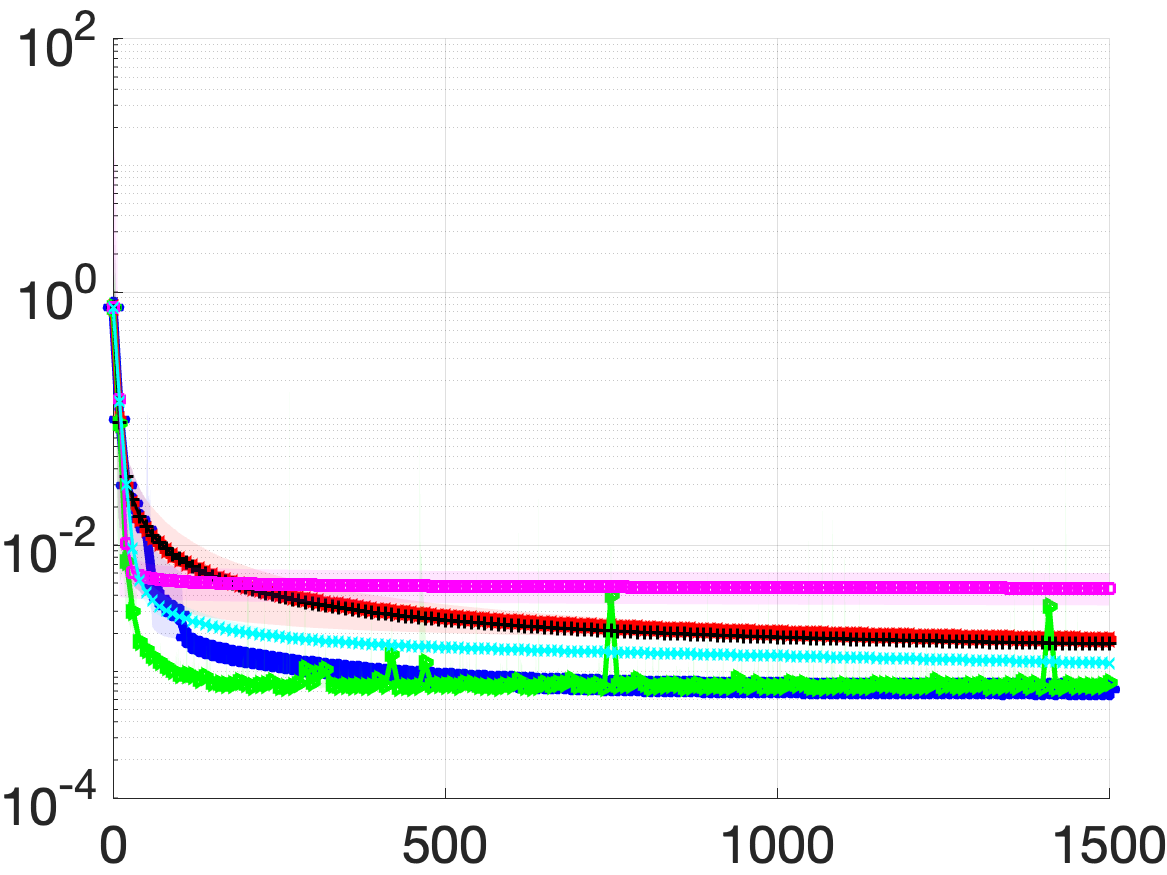}&
    \includegraphics[width=0.23\textwidth]{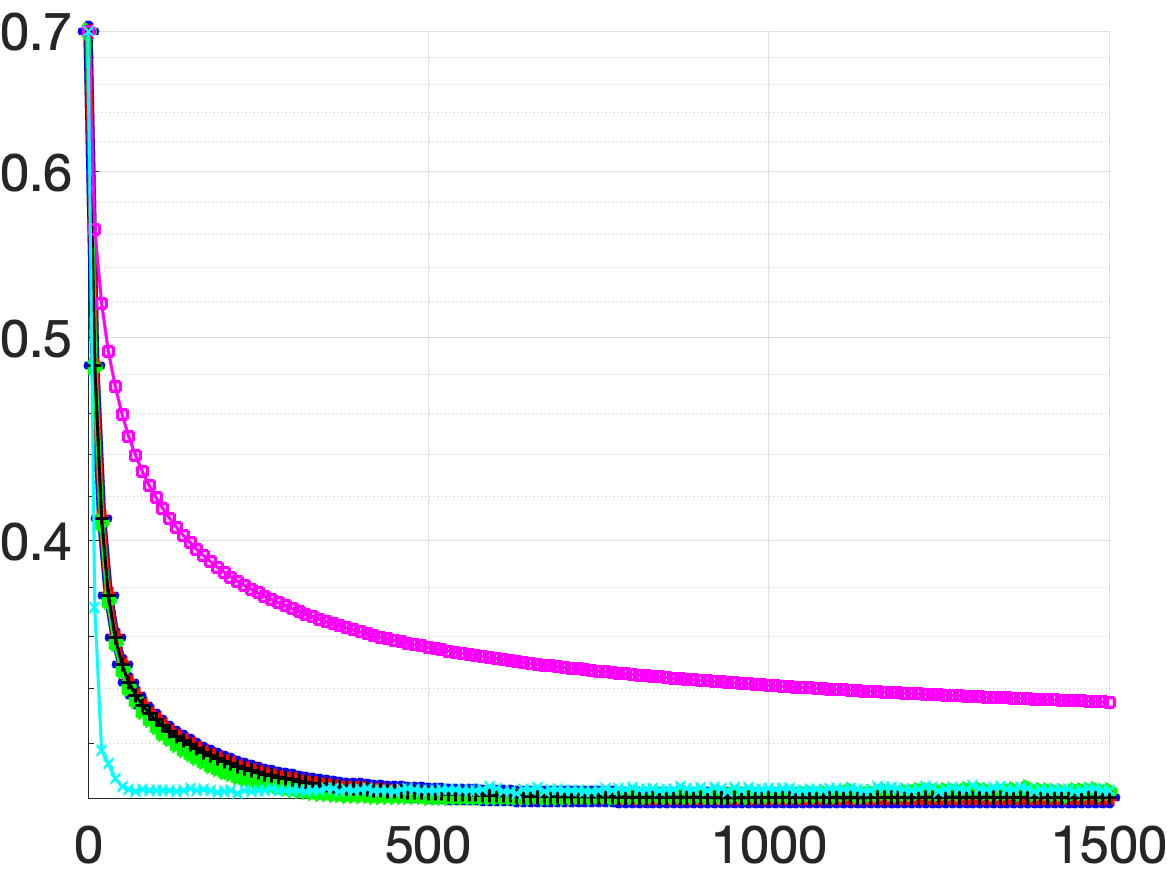}&
    \includegraphics[width=0.23\textwidth]{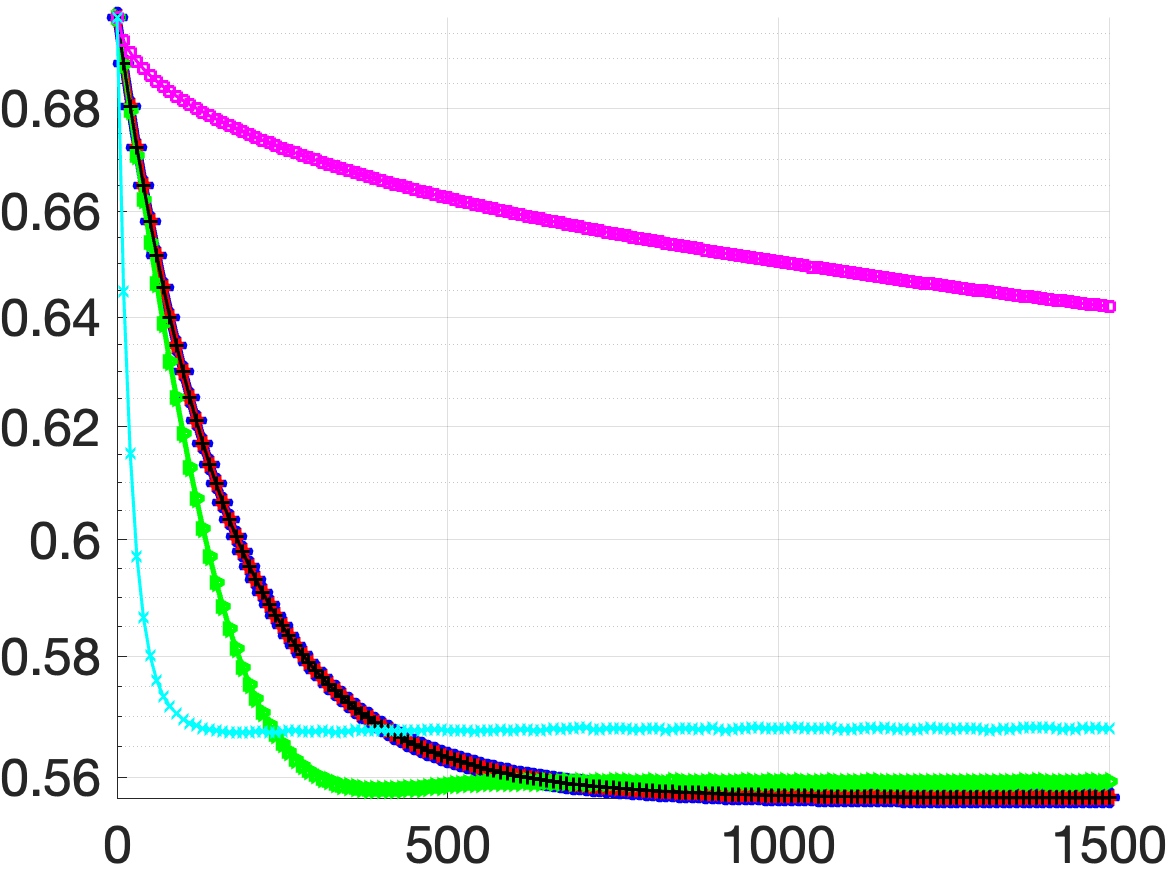}&
    \includegraphics[width=0.23\textwidth]{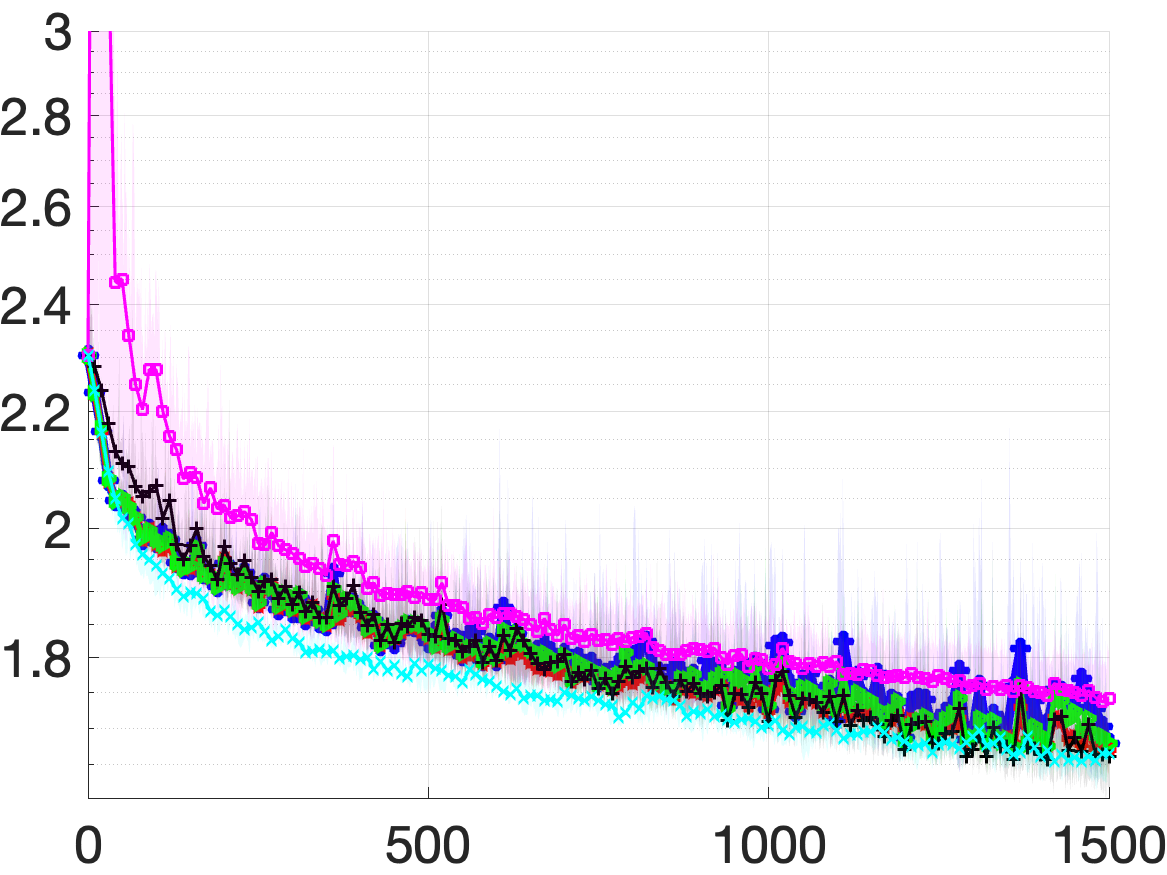}\\
    \includegraphics[width=0.23\textwidth]{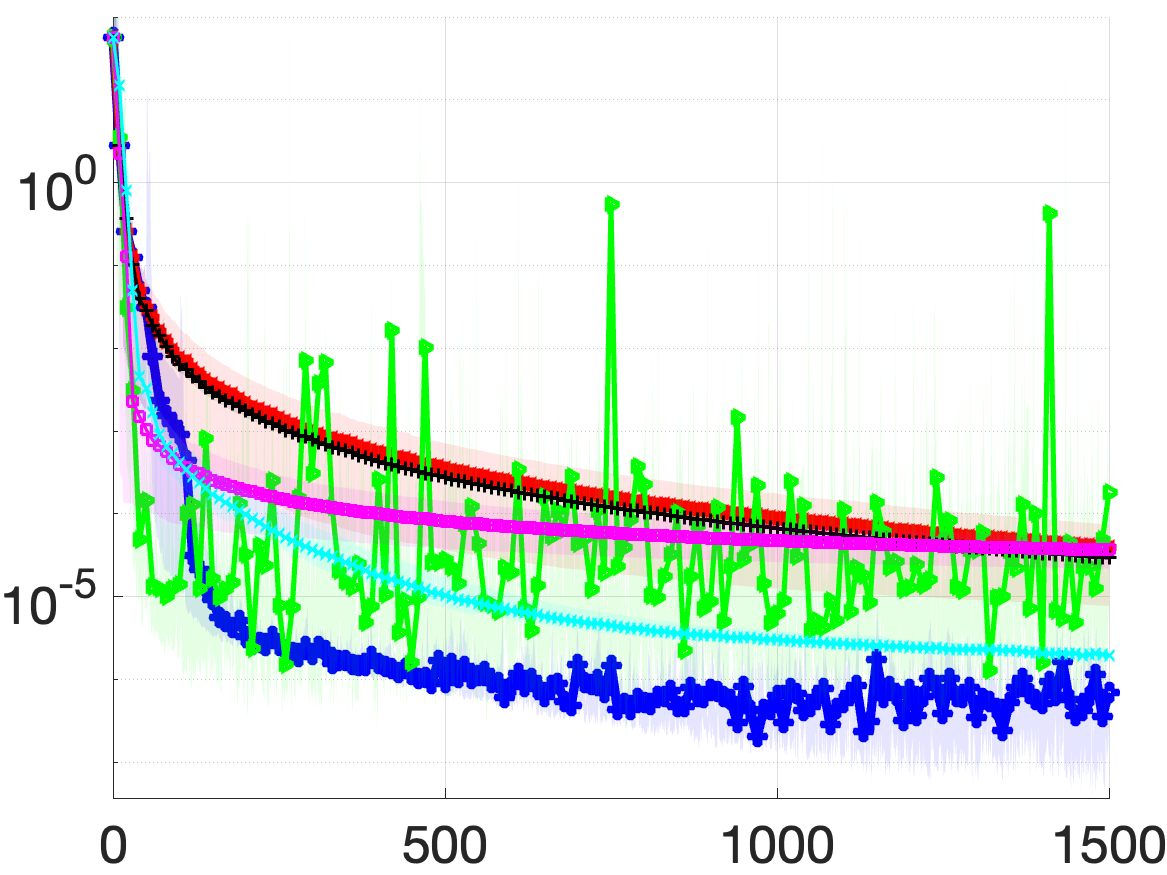}&
    \includegraphics[width=0.23\textwidth]{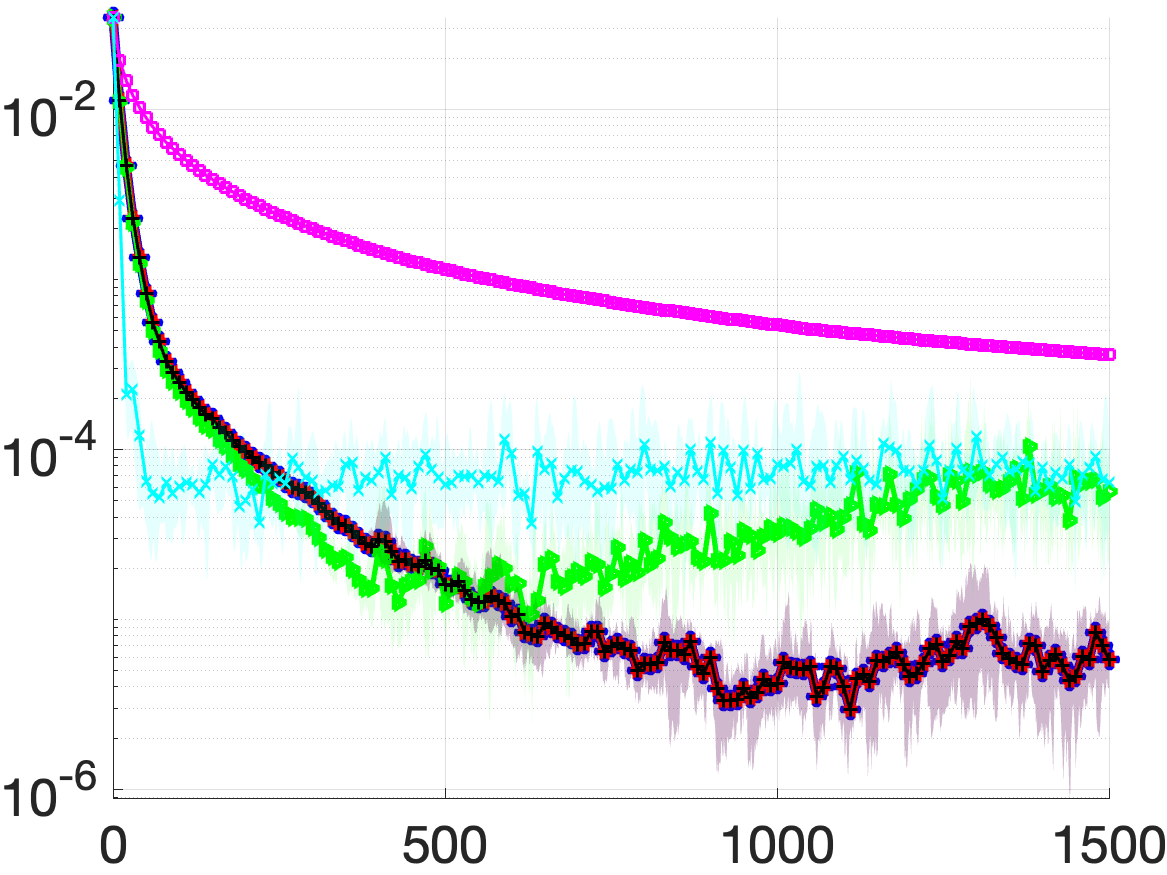}&
    \includegraphics[ width=0.23\textwidth]{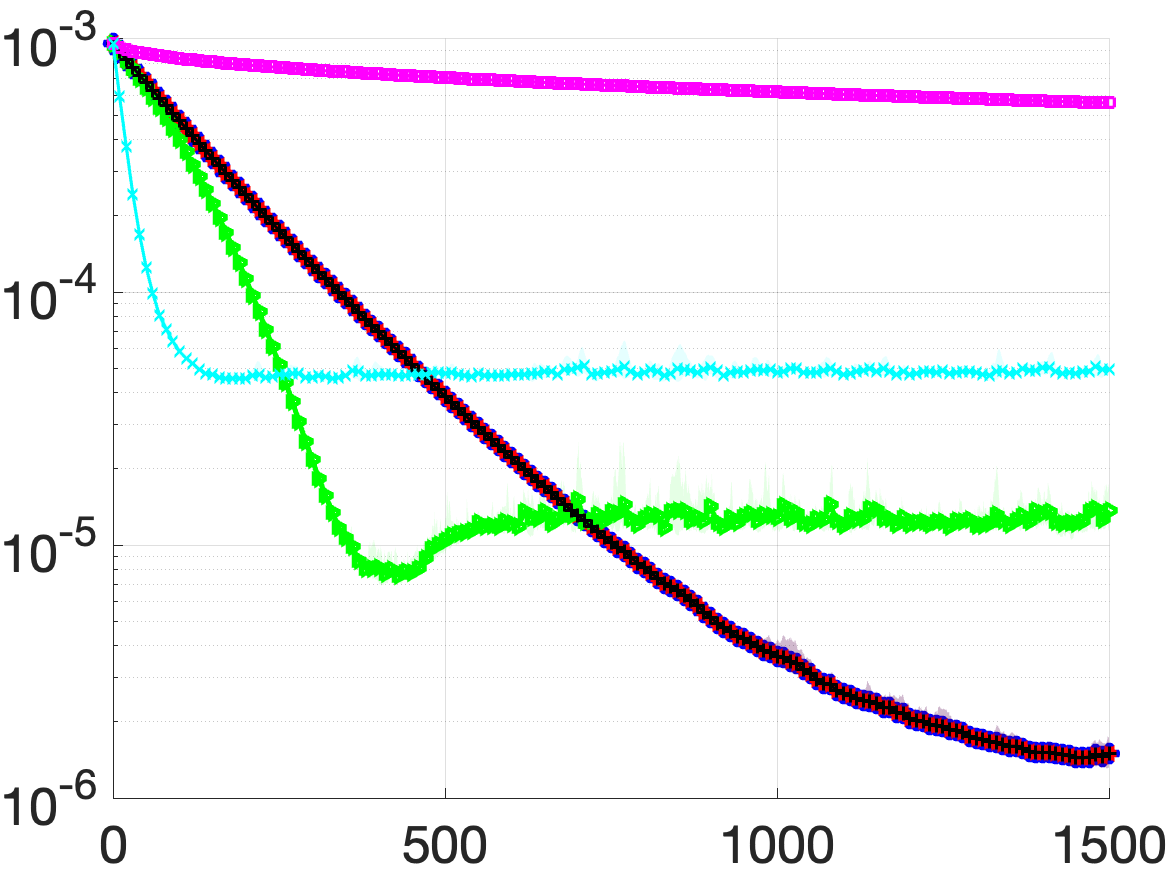}&
    \includegraphics[width=0.23\textwidth]{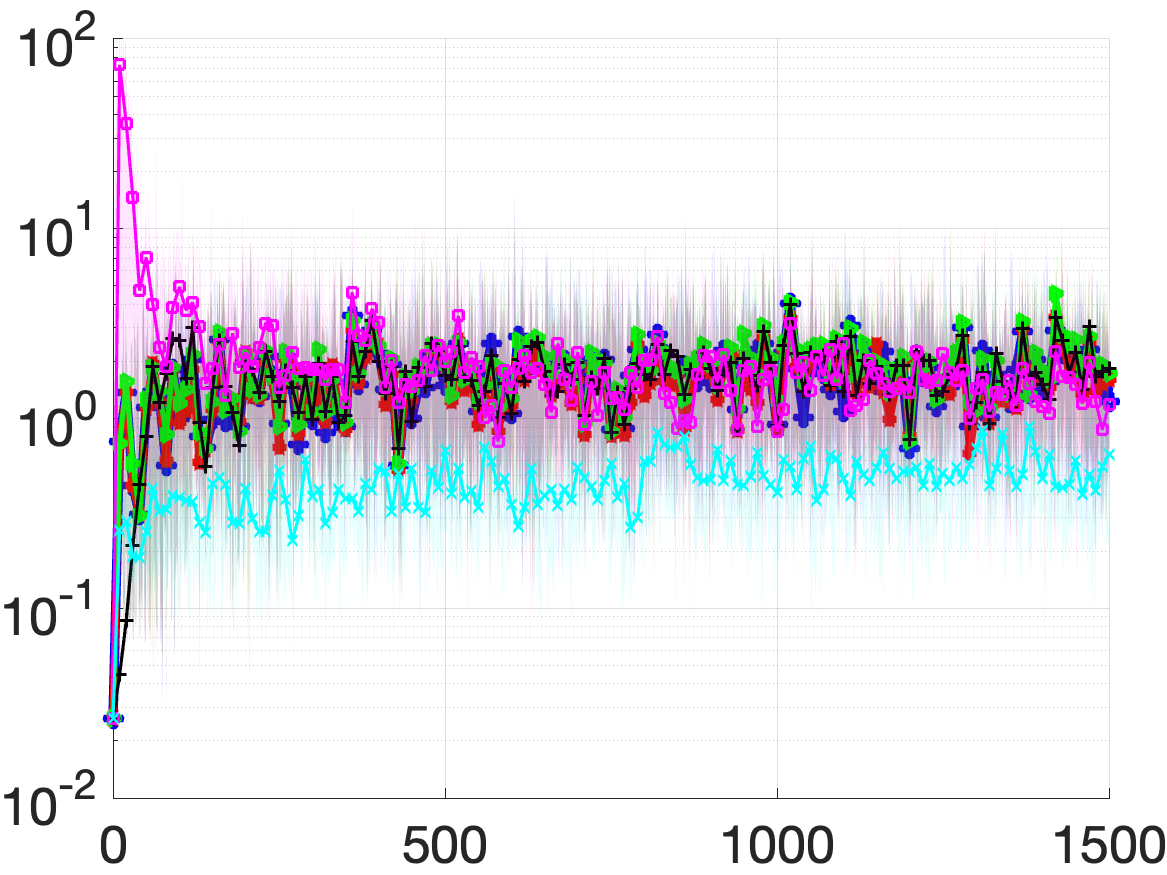}\\
     \includegraphics[width=0.23\textwidth]{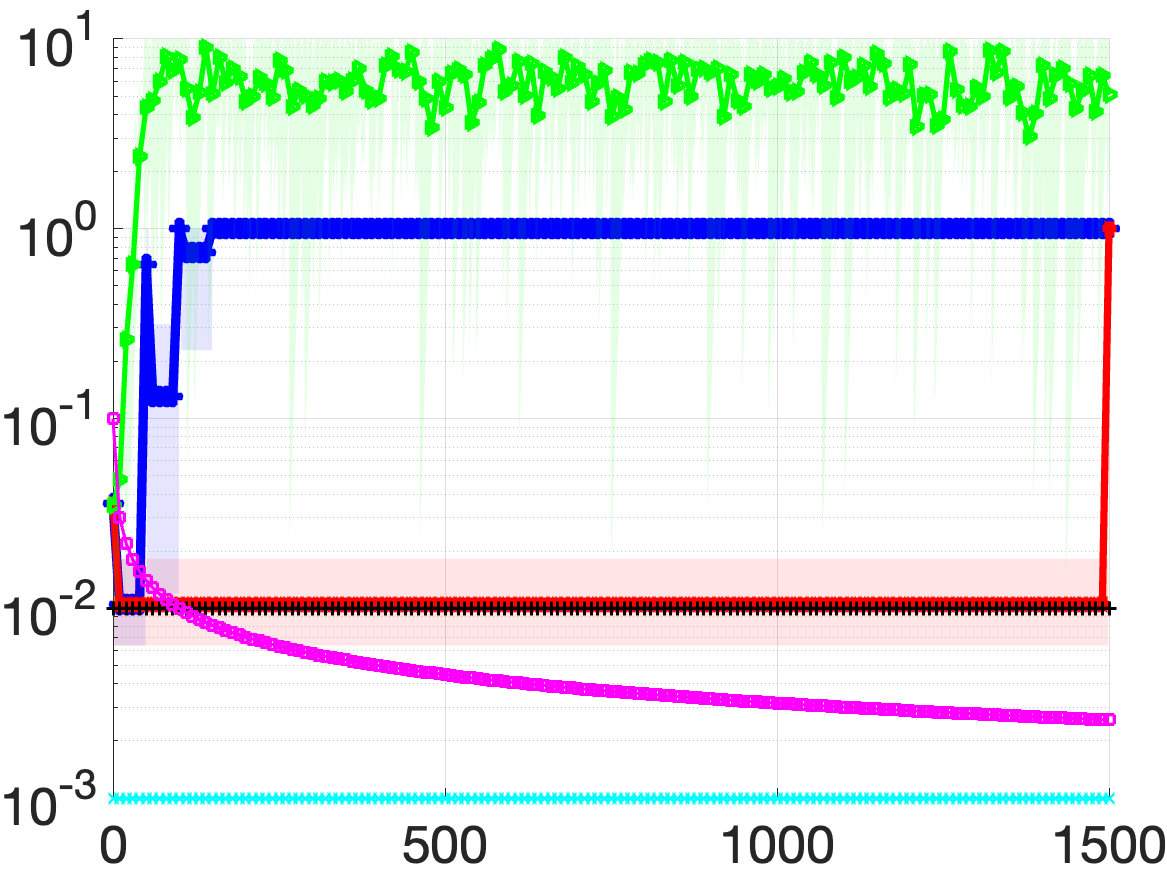}&
    \includegraphics[width=0.23\textwidth]{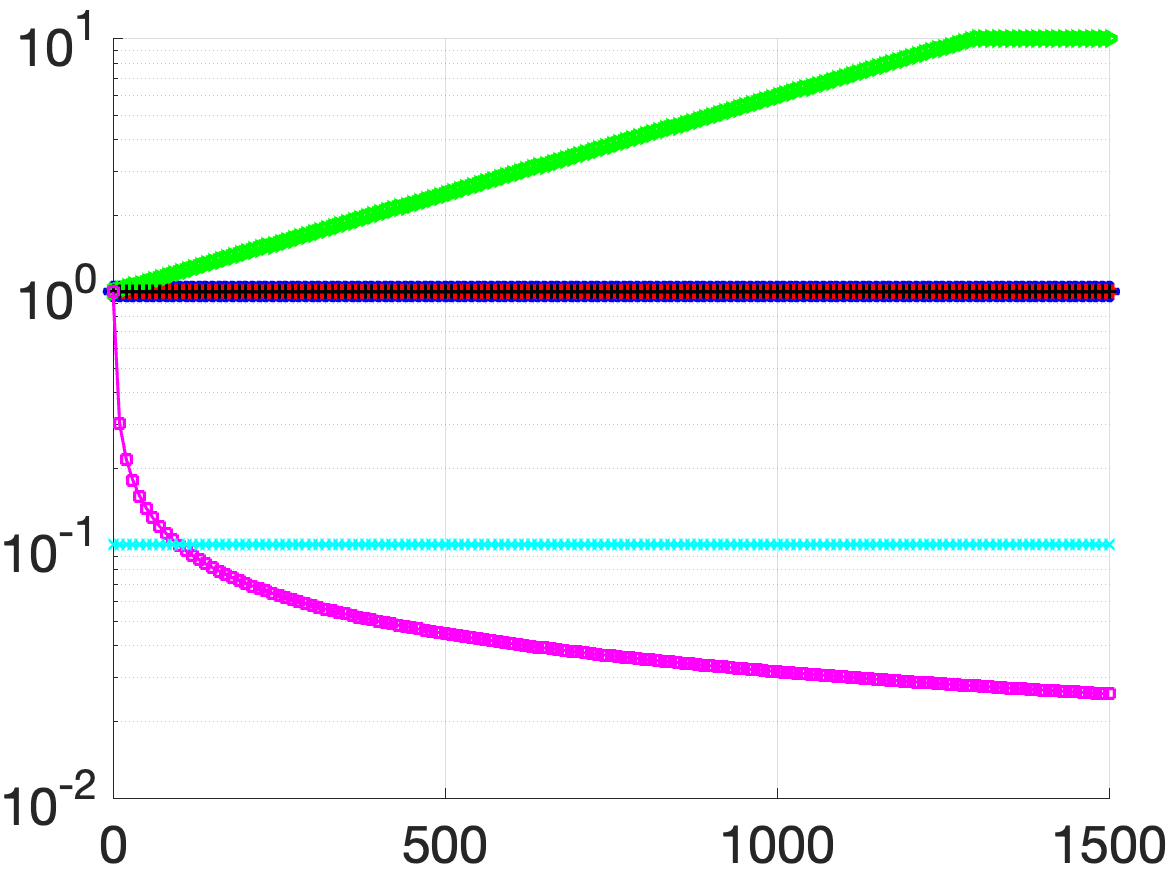}&
    \includegraphics[ width=0.23\textwidth]{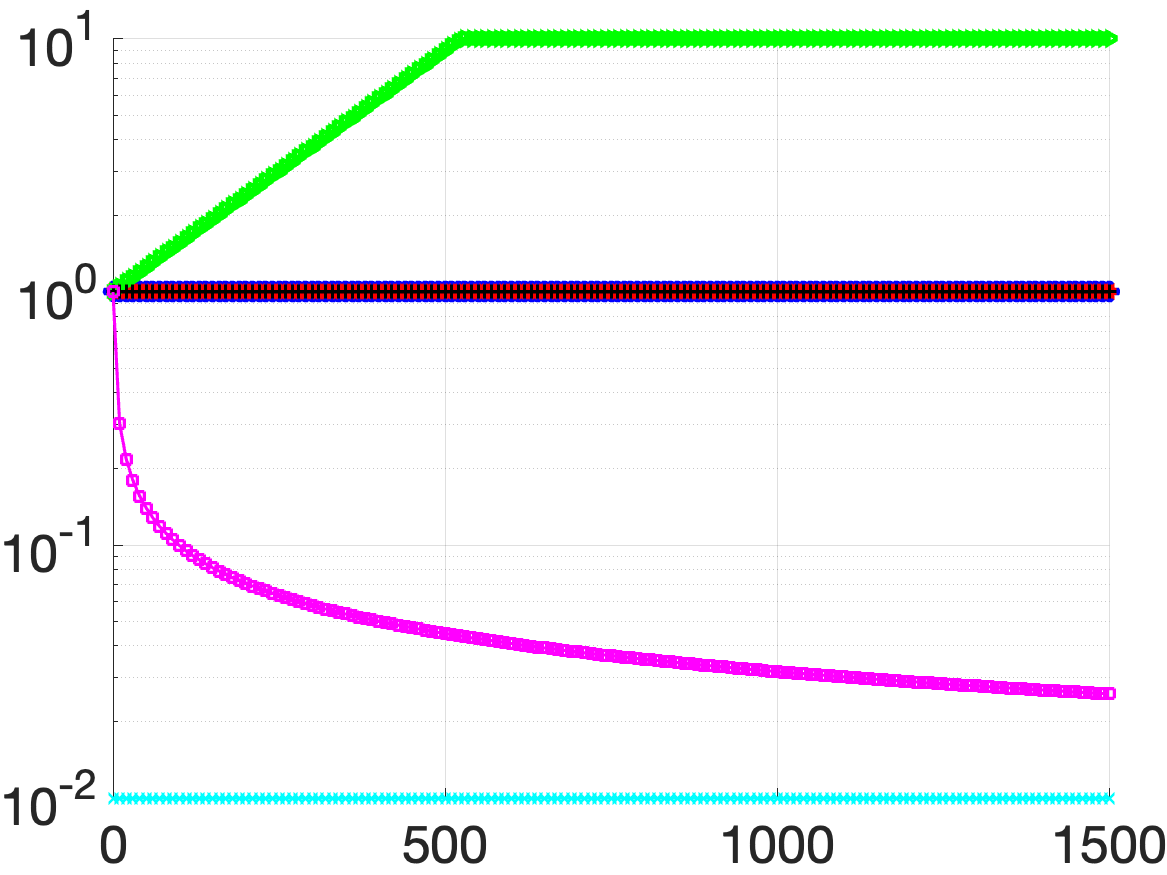}&
    \includegraphics[width=0.23\textwidth]{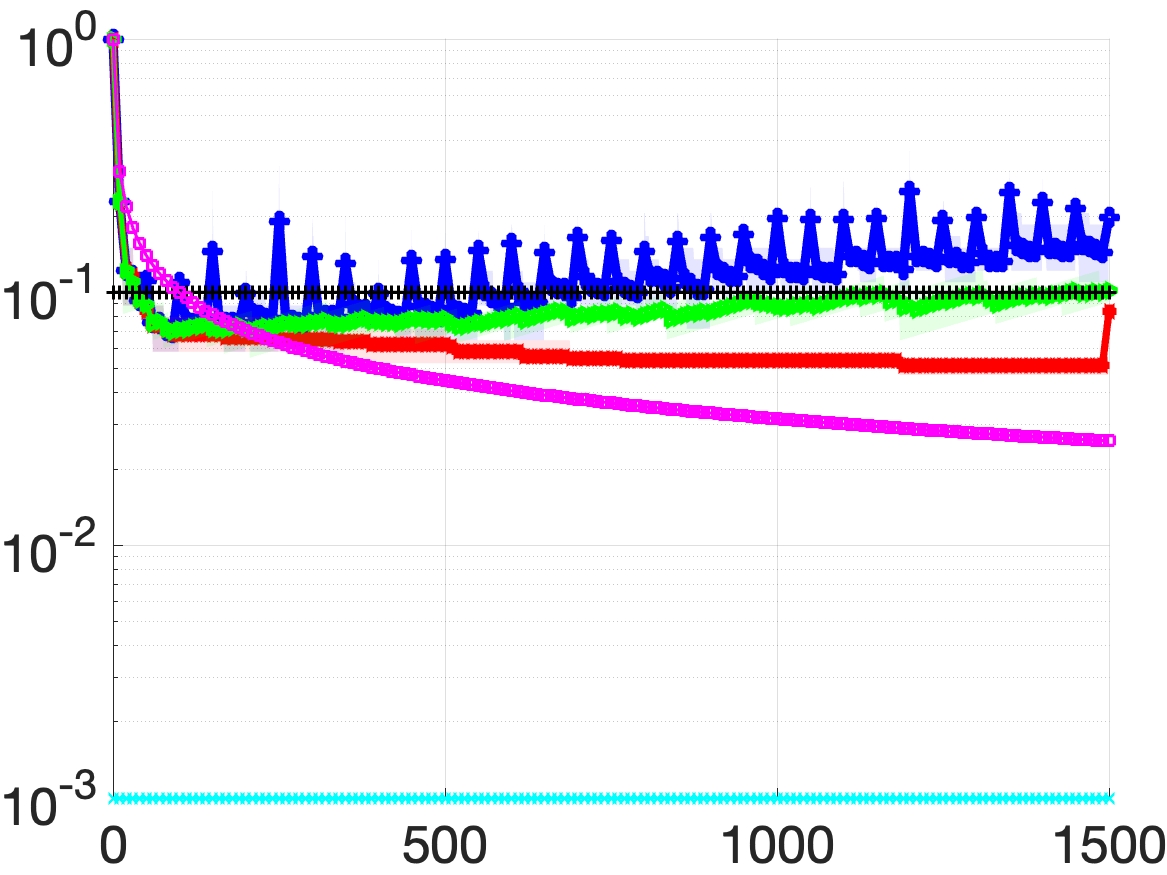}
    \end{tabular}
    \includegraphics[width=0.8\textwidth]{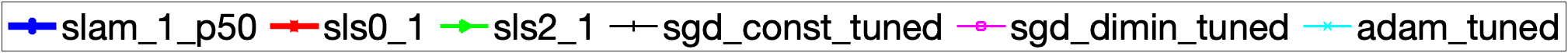}
    \caption{Numerical results of machine learning problems. The rows from top to bottom are each for true $f(x_k)$, $\|g(x_k)\|^2$, and stepsize $t_k$ (average over 5 runs) versus iteration $k$.}
    \label{fig:ml}
\end{figure}

\subsection{Two-stage nonconvex stochastic programming}\label{sec:5.5}    
We now consider the  two-stage stochastic programming problem with nonconvex first-stage objectives and quadratic recourse problems, defined as
\begin{align}\label{2stage}
\begin{aligned}
    &\min_{x \in \R^n} \ f(x) \triangleq H(x) +  \E\left[ \, F(x, \bxi) \, \right]\\
    &\text{subject to} \ x \in \mathcal{X} \triangleq \{x \in \R^n \mid Ax = b, x \ge 0\}
\end{aligned}
\end{align}
where $H: \R^n \to \R$ is a smooth function and $F(x, \bxi)$ is the optimal objective value of the following second-stage problem
\begin{align*}
    F(x, \bxi) = & \min_{y \in \R^m} \ \frac{1}{2}y^\top Q(\bxi)y \\
    & \text{subject to} \ y \in \mathcal{Y}(x, \bxi) \triangleq \{y \in \R^m \mid T(\bxi)x + W(\bxi)y = h(\bxi),\  y\ge 0\}.
\end{align*}
Assuming for all $x \in \mathcal{X}$ and for almost all $\bxi$, $\mathcal{Y}(x, \bxi)$ is nonempty and $Q(\bxi)$ is positive definite. Suppose the dual solution of the  second-stage problem is unique. We revisit this uniqueness requirement as well as satisfaction of Assumptions~\ref{ass: F} and~\ref{ass: var} in Appendix~\ref{app: 2st}. Then $F(\bullet, \bxi)$ is differentiable on an open set containing  $\mathcal{X}$, see \cite[Corollary 2.23]{Shapiro2009}, and 
\begin{align*}
    \nabla F(x, \bxi) = T(\bxi)^\top \pi(x, \bxi),
\end{align*}
where $\pi(x, \bxi)$ is the unique optimal dual solution associated with the constraint $ T(\bxi)x + W(\bxi)y = h(\bxi)$ (Note that the sign of $\pi(x,\bxi)$ in $\nabla F(x, \bxi) = T(\bxi)^\top \pi(x, \bxi)$ depends on the Lagrangian function when adding equality constraints. Here we assume the Lagrangian of the second stage problem is $L(y, \pi, s) = \frac{1}{2}y^\top Qy + \pi^\top (Wy - h + Tx) - y^\top s$.). Therefore, we use 
\begin{align*}
   \tg_k(x) =  \nabla H(x) +  \frac{1}{N_k} \sum_{i = 1}^{N_k} \nabla F(x, \bxi_{k,i})  
\end{align*}
as the stochastic gradient estimator of $f(x)$. The constraint $x \in \mathcal{X}$ in the first-stage problem can be handled by the projection operator over the set $\mathcal{X}$. Empirically, we solve the following two-stage stochastic economic dispatch problem. 
\begin{align*}
    \min_{x \in \R^n}\  &f(x) = \frac{1}{2} x^\top H_g x + c_g^\top x + \E[F(x, \bxi)] \\
    \text{s.t. } \ &x^\top \mathbf{1} = \bar{D} \\
    &\delta  \le x_i \le (1-\delta)\mathrm{Cap}_i \quad \forall i = 1, \ldots, n 
\end{align*}
and the second-stage problem is
\begin{align*}
    F(x, \bxi) = \min_{u^{+}, u^{-},\alpha^{+}, \alpha^{-}, s, v}  & \frac{1}{2} \big( (u^{+})^\top \mathrm{diag}(c_u)u^+ + (u^{-})^\top \mathrm{diag}(c_d)u^- \\
     + (\alpha^{+})^\top & \mathrm{diag}(10^{-4}c_u)\alpha^+  + (\alpha^{-})^\top \mathrm{diag}(10^{-4}c_d)\alpha^- + c_s s^2 + c_v v^2 \big)  \\
    \text{s.t. } (x + u^{+} - u^{-})^\top \mathbf{1} &= D(\bxi) - s + v \\
    x_i + u_i^{+} + \alpha^{+}_i &= \mathrm{Cap}_i \quad\  \forall i = 1, \ldots, n  \\
    x_i - u_i^{-} - \alpha^{-}_i& = 0 \quad \quad \quad \forall i = 1, \ldots, n  \\
    u^{+}, u^{-}, \alpha^{+}, \alpha^{-}, s, v &\ge 0,
\end{align*}    
where the decision variables are given by 

\begin{tabular}{ll}
   $\bullet$  $x \in \R^{n}$ & base generation in the first stage \\
   $\bullet$ $u^{+}, u^{-} \in \R^{n}$ & up and down redispatch in the second stage \\
   $\bullet$ $\alpha^{+}, \alpha^{-} \in \R^{n}$ & slack variables in the second stage \\
   $\bullet$ $s \in \R$ & load shedding in the second stage \\
   $\bullet$ $v \in \R$ & generation spillage in the second stage 
\end{tabular}

\noindent and the parameters are defined as

\begin{tabular}{ll}
   $\bullet$ $n \in \N$ & number of generators \\
   $\bullet$ $H_g \in \R^{n \times n}$, $c_g\in \R^{n}$ & generation cost parameters, $H_g$ is indefinite\\
   $\bullet$ $\mathrm{Cap} \in \R^{n}$ & capacity of generators \\
  $\bullet$  $\bar{D} \in \R$ & predicted demand \\
  $\bullet$  $\delta \in (0,1)$ & buffer to capacity limit for first-stage\\
  $\bullet$  $c_u, c_d \in \R^{n}$ & positive second-stage cost of up and down dispatch \\
   $\bullet$ $c_s, c_v \in \R$ & positive load shedding and spillage cost\\
  $\bullet$  $D(\bxi) \in \R$ & actual demand at scenario $\bxi$. 
\end{tabular}
\medskip

\noindent The parameters of the second-stage problem can be compactly defined as
\begin{align*}
y&=\begin{pmatrix}(u^{+})^\top &
(u^{-})^\top &
(\alpha^{+})^\top &
(\alpha^{-})^\top &
s &
v \end{pmatrix}^\top,\\
Q(\bxi) &=  \mathrm{diag}\begin{pmatrix}
    (c_u)^\top& (c_d)^\top& (10^{-4}c_u)^\top & (10^{-4}c_d)^\top & c_s & c_v
\end{pmatrix},\\
   W(\bxi) &= \begin{pmatrix}
    \mathbf{1}^\top & -\mathbf{1}^\top & \mathbf{0}^\top & \mathbf{0}^\top & 1 & -1 \\
    I_{n} & 0 & I_{n} & 0 & 0 & 0 \\
    0 & -I_{n}    & 0 & -I_{n} & 0 & 0 \\
\end{pmatrix} , \quad h(\bxi) = \begin{pmatrix}
    D(\bxi) \\
    \mathrm{Cap} \\
    0 \\
    \end{pmatrix}, 
    \quad \text{and } T(\bxi)= \begin{pmatrix}
    \mathbf{1}^\top \\
    I_{n} \\
    I_{n} \\
\end{pmatrix}.
\end{align*}
Note that $Q(\bxi)$ is positive definite, and the load shedding variables $v$ and spillage variable $s$ guarantee the feasibility of the second-stage problem for all $x \in \mathcal{X}$ and almost all $\bxi$ while their unit cost $c_v$ and $c_s$ are maintained as high to discourage load shedding and spillage. This ensures that the second-stage problem has a unique primal solution as well as a unique active set. Furthermore, the matrix $W(\bxi)$ is always full row rank. 
Furthermore, we choose $H_g$ to be indefinite to make the first-stage problem nonconvex. We generate $D(\bxi)$ from a normal distribution with mean $\bar{D}$ and standard deviation $\sigma$, truncated to the interval $\bar{D} \pm 3\sigma$.

\medskip

We set the value of parameters as follows. We conducted tests on $n \in \{10, 100, 1000\}$. $H_g$ is a diagonal matrix with the diagonal elements being generated from uniform distribution between $-1$ and $1$, $c_g = [1, 1, \ldots, 1]$, $\bar{D} = 4n$, $\mathrm{Cap}_i = 5+ 0.2i$ for $i \in \{1, \ldots, n\}$, $\delta = 0.1$, $c_u = [2,2,\ldots,2]$, $c_d = [2,2,\ldots,2]$, $c_s = 20n$, $c_v = 10n$, and $\sigma = 5$. The initial point $x_0$ is set to be the solution of the first-stage problem without the second-stage recourse, i.e., when $\E[F(x, \bxi)] \equiv 0$. The initial point and the second stage problem are solved by the solver \texttt{Gurobi} 12.0.3. Across all of the compared schemes, we employ a batch size of $N_k = 16$,  while the iteration budget is $K = 100$. Unlike the stochastic Rosenbrock function, the expectation $\E[F(x, \bxi)]$ cannot be computed explicitly. Therefore, we employ a sample size of $128$ to estimate the true objective value and residual norm square $\|G_s(x_k)\|^2$ at each iteration for comparison. Figure~\ref{fig:tsp} shows the estimated true objective and residual norm square versus iteration $k$ for problems with different generators. In this setting, the scheme \texttt{slam\_1\_p50} and \texttt{sls0\_1} perform similarly to \texttt{sgd\_const\_tuned} for the first several iterations since \texttt{sgd\_const\_tuned} is indeed using $1$ as the constant stepsize while the linesearch schemes \texttt{slam\_1\_p50} and \texttt{sls0\_1} also use $1$ as the stepsize accepted by the Armijo condition, until convergence which appears to ensue within the first 20 or so iterations in most instances. 

\begin{figure}[H]
    \centering
    \begin{tabular}{ccc}
    $n=10$ & $50$ & $100$ \\
    \includegraphics[width=0.23\textwidth]{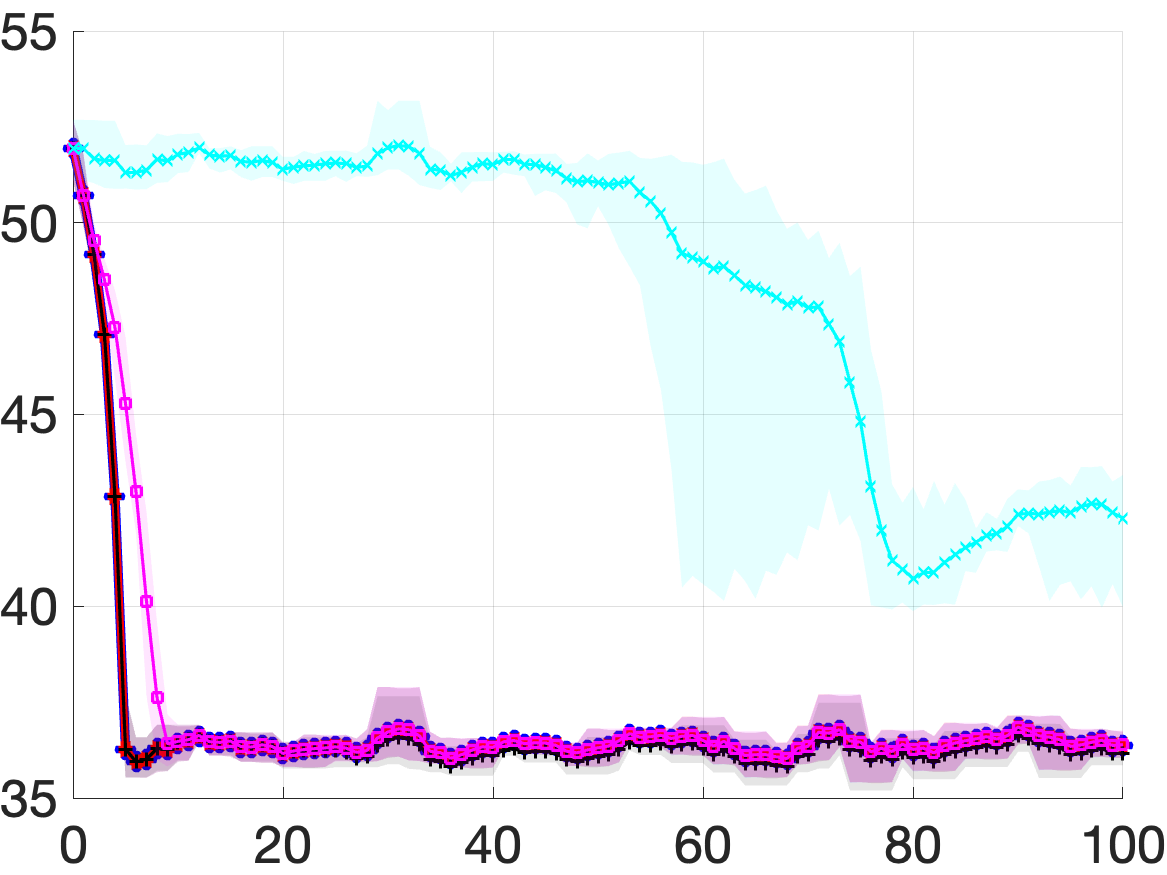}&
    \includegraphics[width=0.23\textwidth]{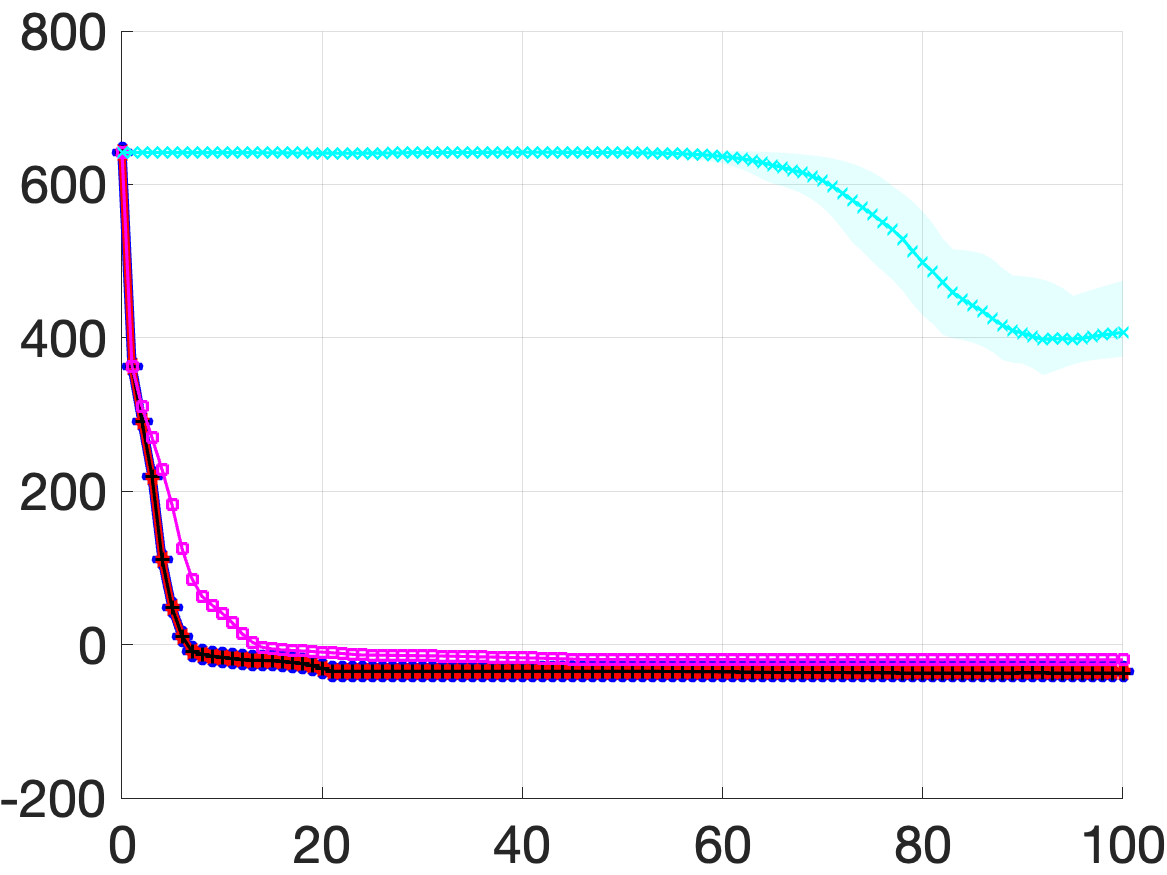}&
    \includegraphics[width=0.23\textwidth]{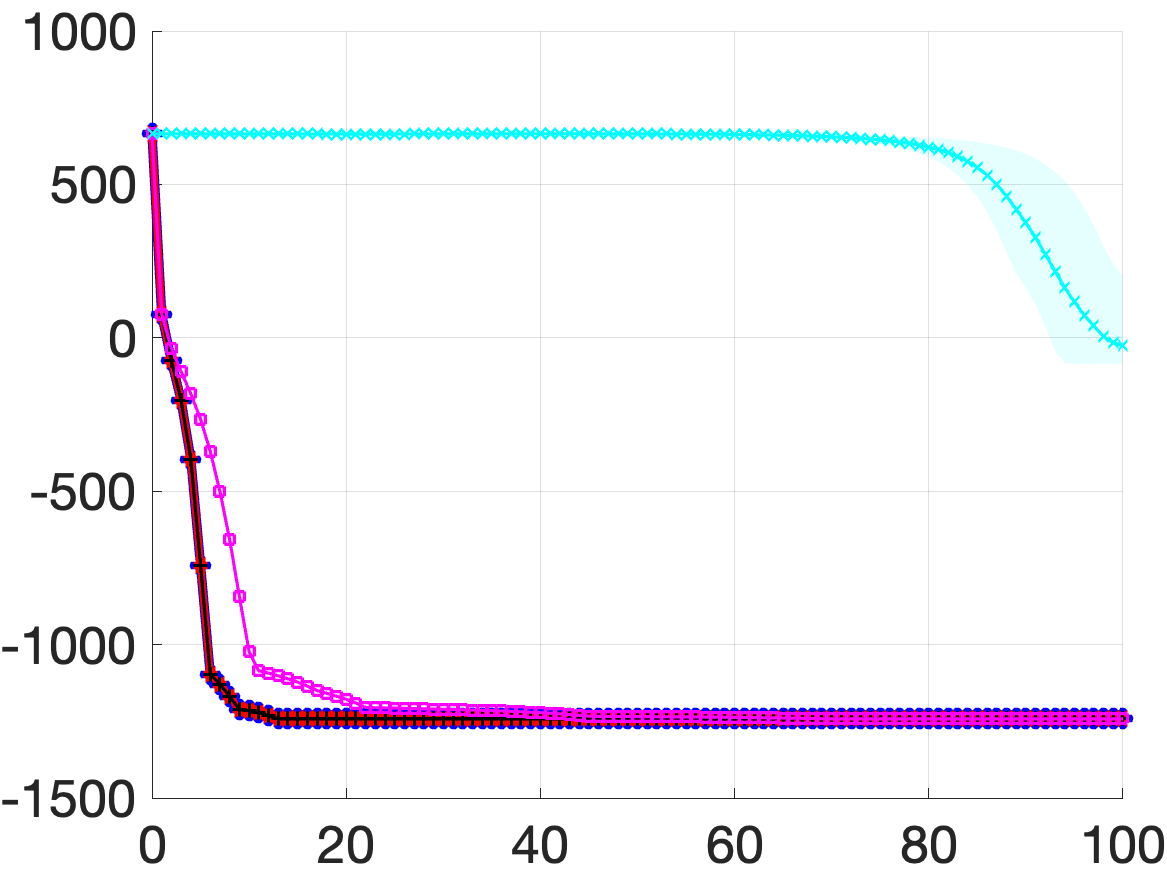}\\
    \includegraphics[width=0.23\textwidth]{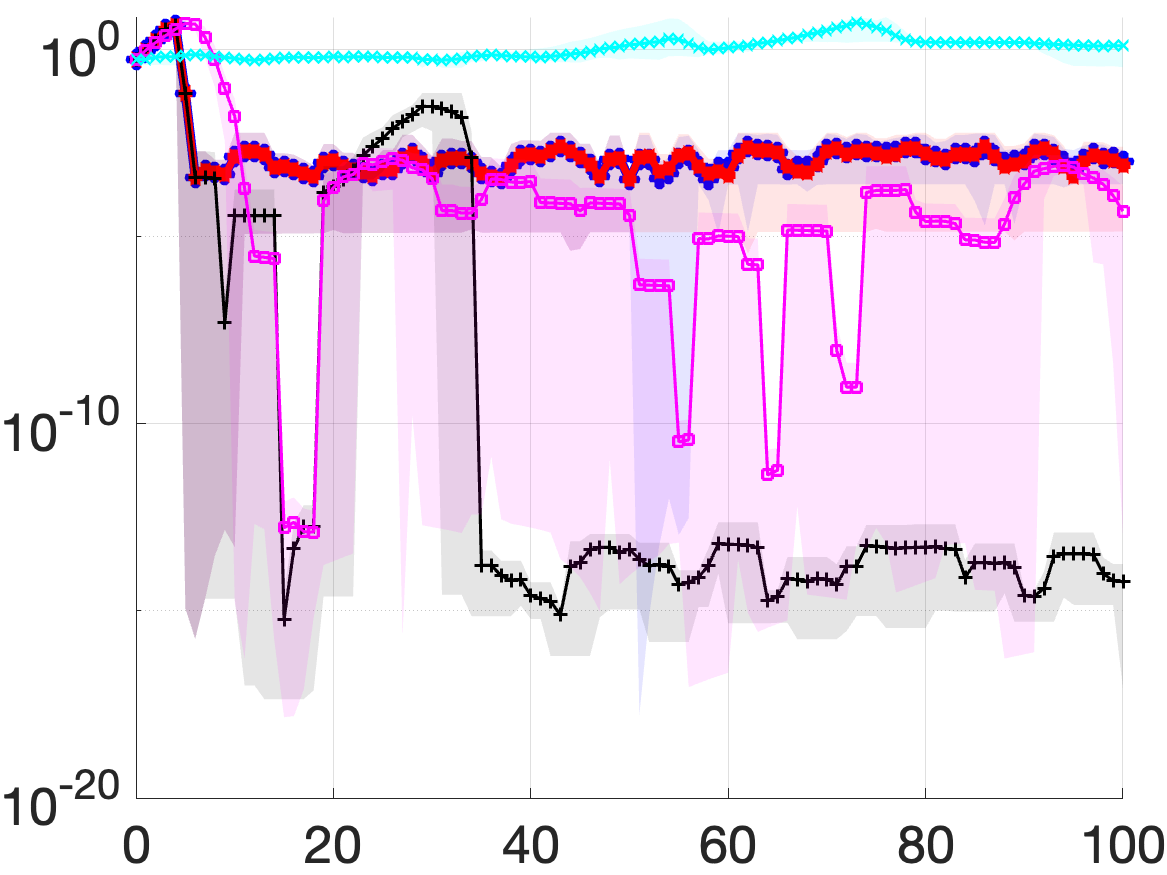}&
    \includegraphics[width=0.23\textwidth]{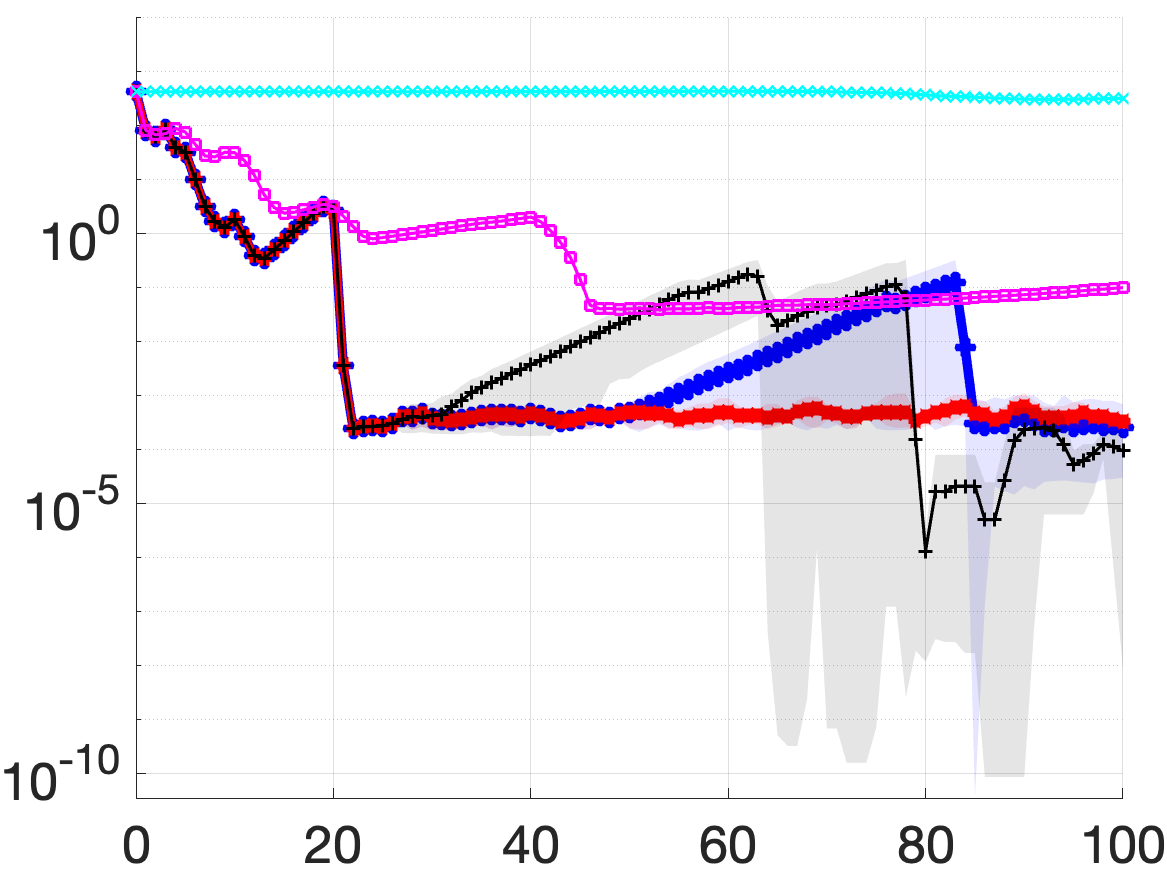}&
    \includegraphics[width=0.23\textwidth]{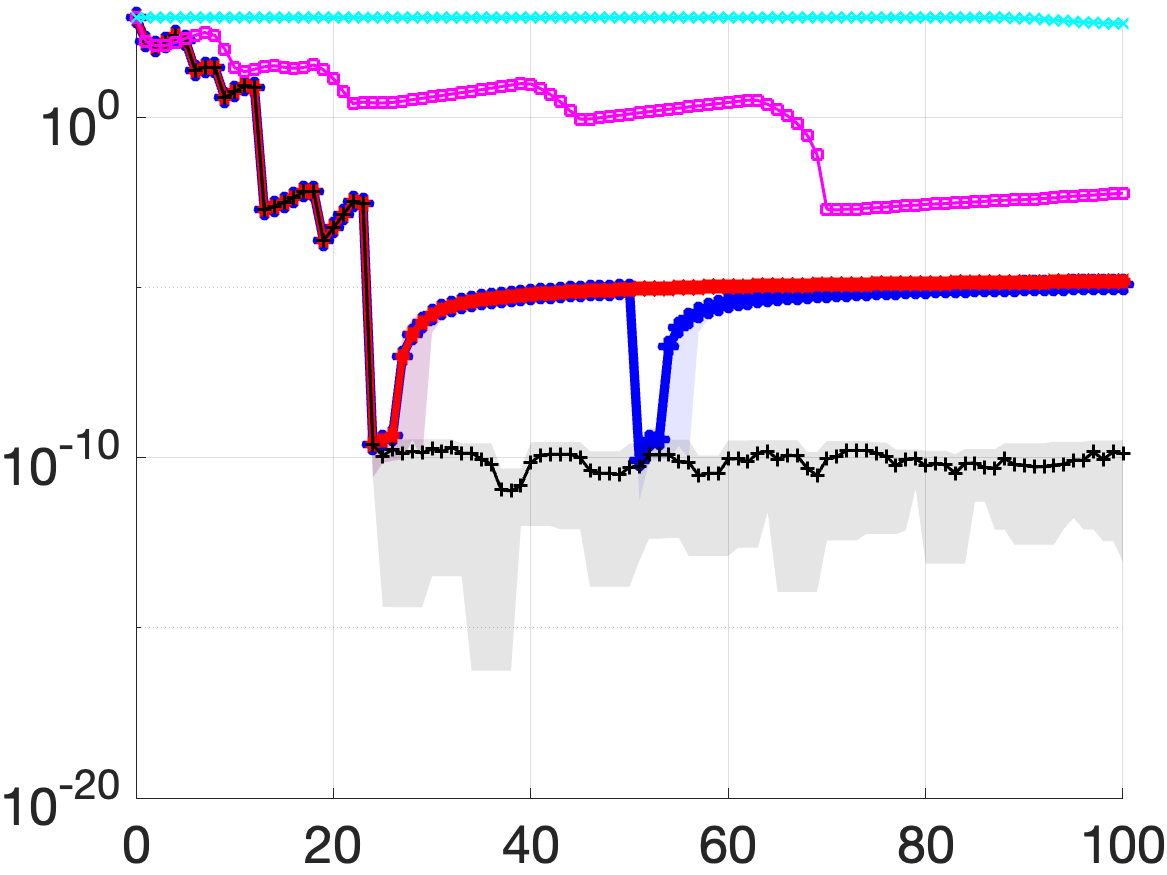}\\
    \includegraphics[width=0.23\textwidth]{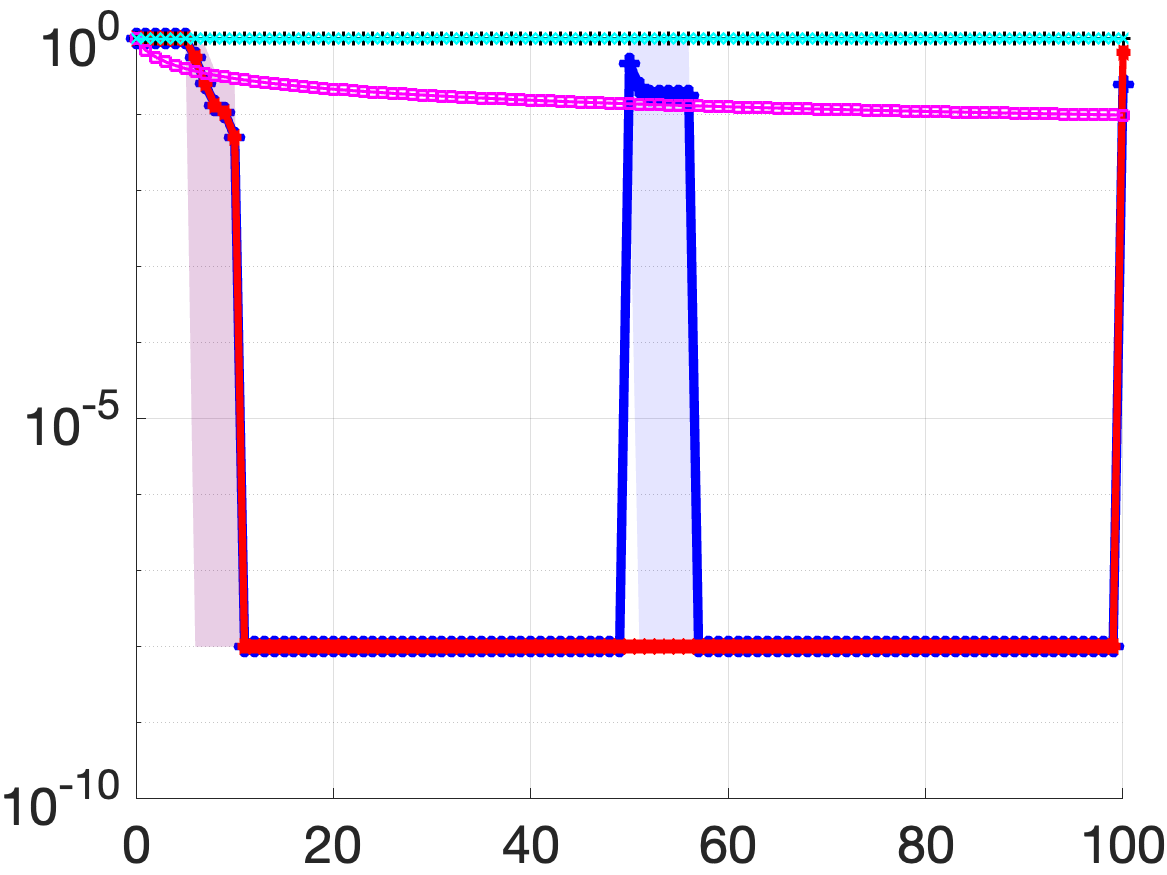}&
    \includegraphics[width=0.23\textwidth]{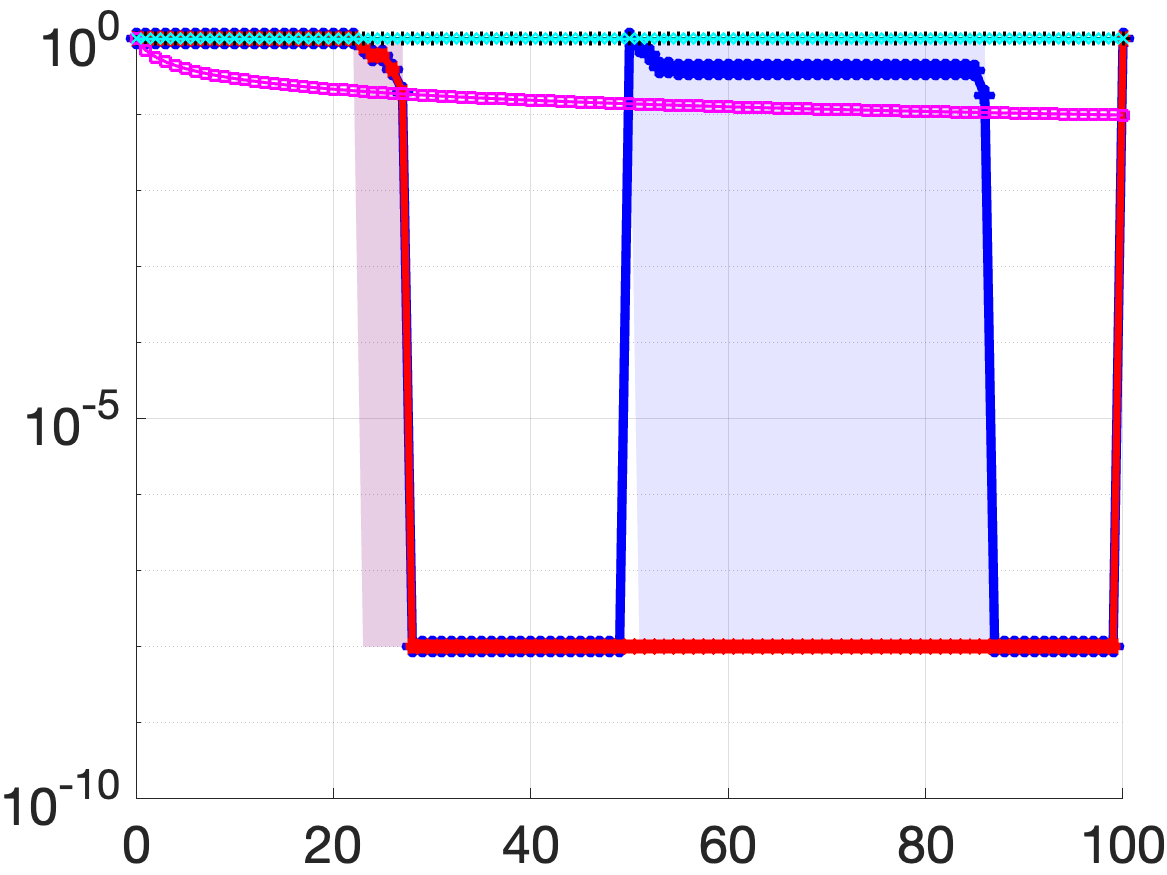}&
    \includegraphics[width=0.23\textwidth]{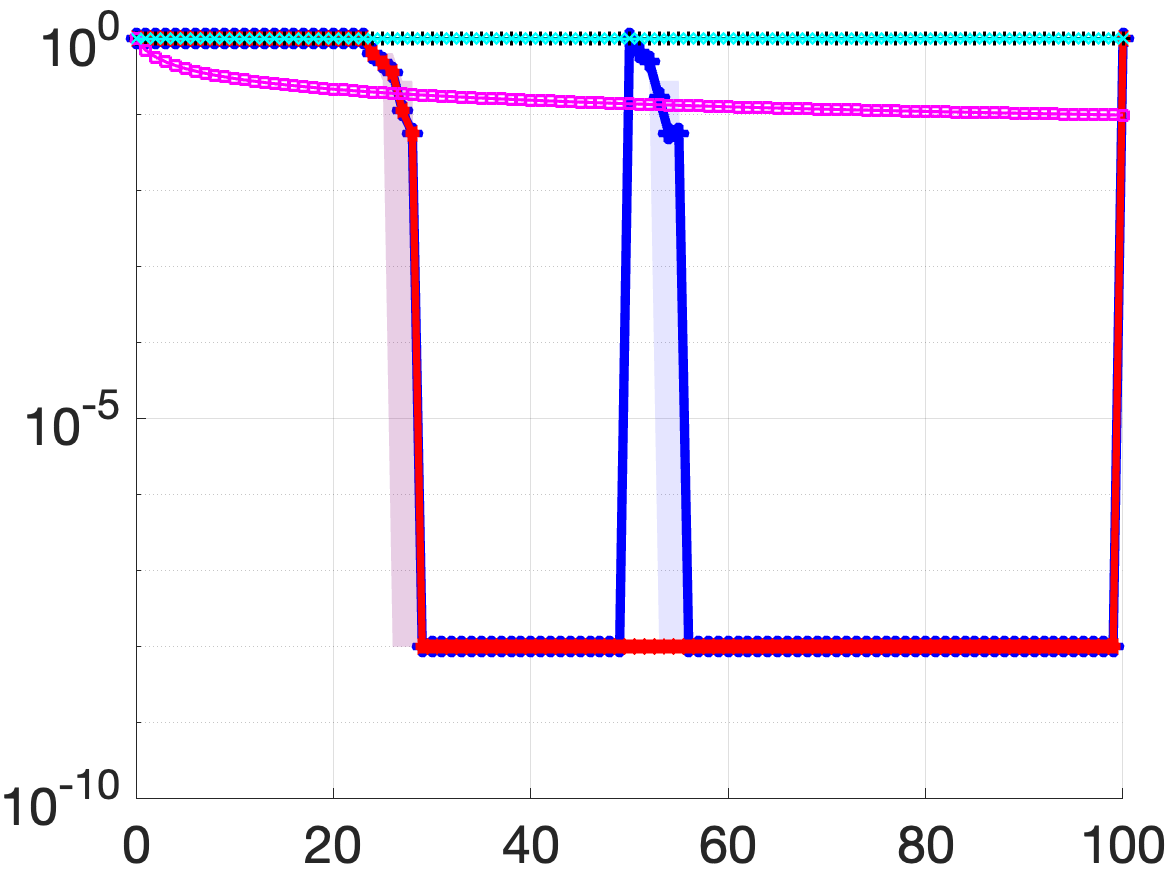}
    \end{tabular}
    \includegraphics[width=0.8\textwidth]{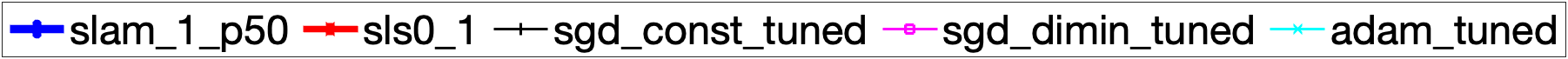}
    \caption{Numerical results of two-stage stochastic dispatch problem for $n \in \{10, 100, 1000\}$. The top, middle and bottom rows are each for the \emph{estimated} true $f(x_k)$, $\|G(x_k)\|^2$ {and $t_k$} (average over 5 runs) versus iteration $k$. The scheme \texttt{sls2} is not included in the comparison since it is implemented for finite sample space problems. 
    }
    \label{fig:tsp}
\end{figure}

\subsection{Tuning period parameter of SLAM} \label{sec:5.6}
Previous experiments with our proposed scheme are all conducted with a fixed period parameter $p_j = 50$ for all $j$. In this subsection, we investigate the performance of SLAM with different choices of period or cycle length on the \texttt{duke} problem. Notice that in the stepsize trajectory of SLAM of \texttt{duke} problem shown in Figure~\ref{fig:ml}, the stepsizes are small at the earlier stage and increase to $1$ quickly. Hence, the length of period may affect the speed at which the stepsize increases. We test the constant period lengths in $\{1, 10, 50, 100, 1500\}$ and the result is shown in Figure~\ref{fig:duke_tunep}. Note that \texttt{slam\_1\_p1500} is equivalent to \texttt{sls0\_1} scheme and represents a setting where the steplength is never initialized to $s = 1$ and the steplength sequence is non-increasing for the entire duration of the scheme.
\begin{figure}[H]
    \centering
   \includegraphics[width=0.32\textwidth]{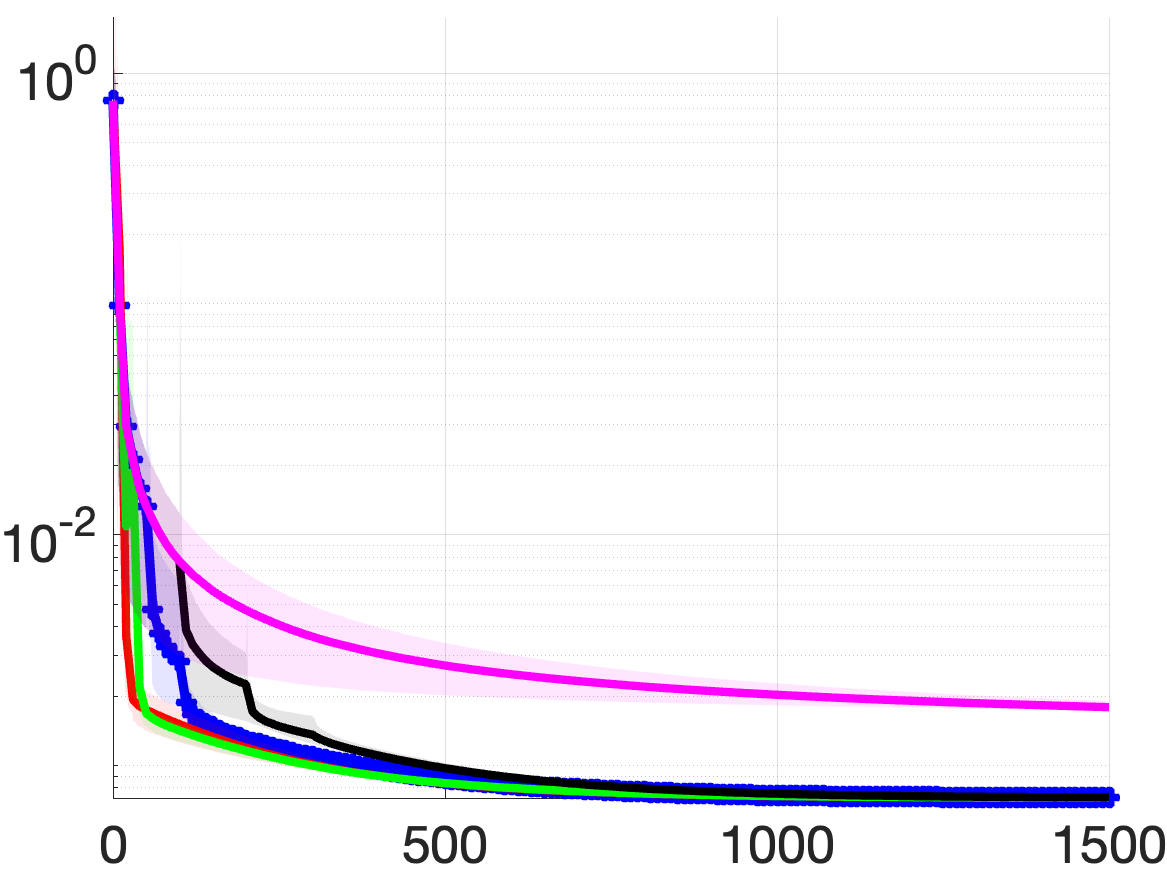}
   \includegraphics[width=0.32\textwidth]{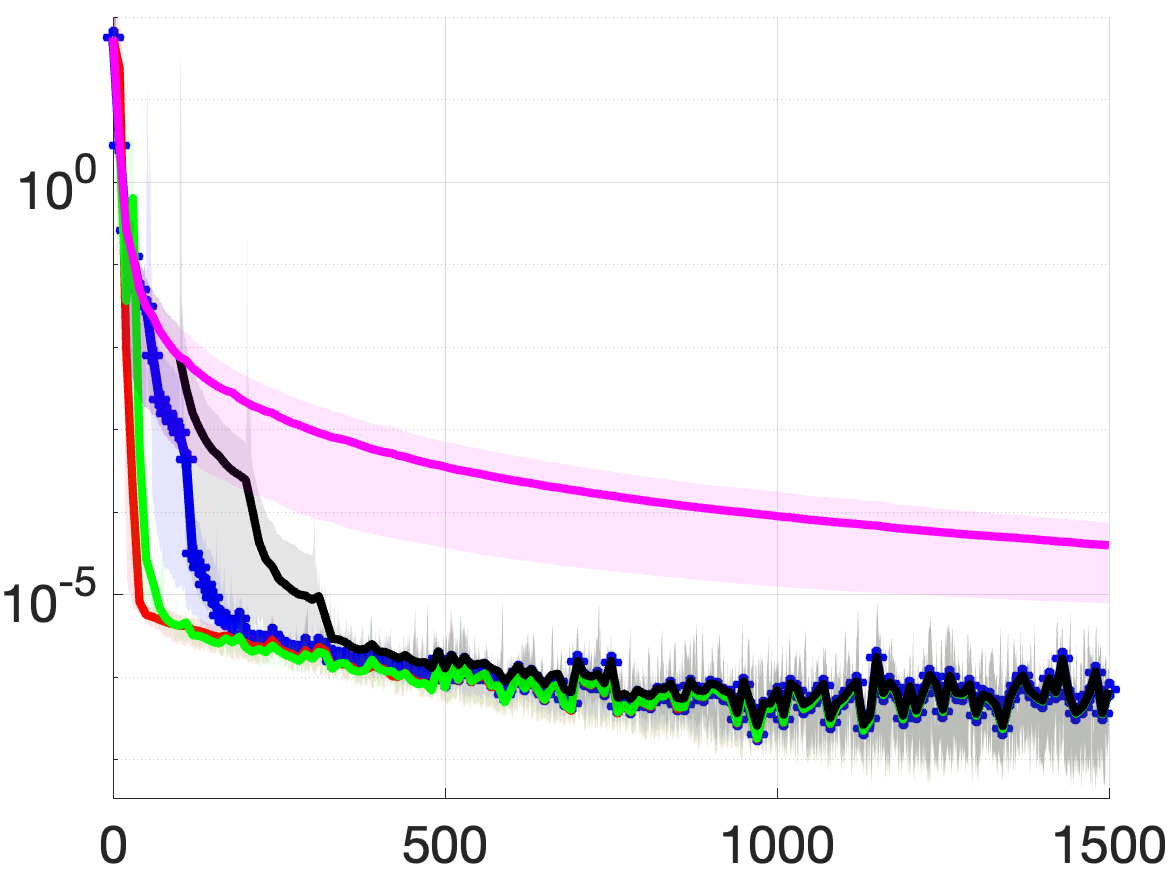}
   \includegraphics[width=0.32\textwidth]{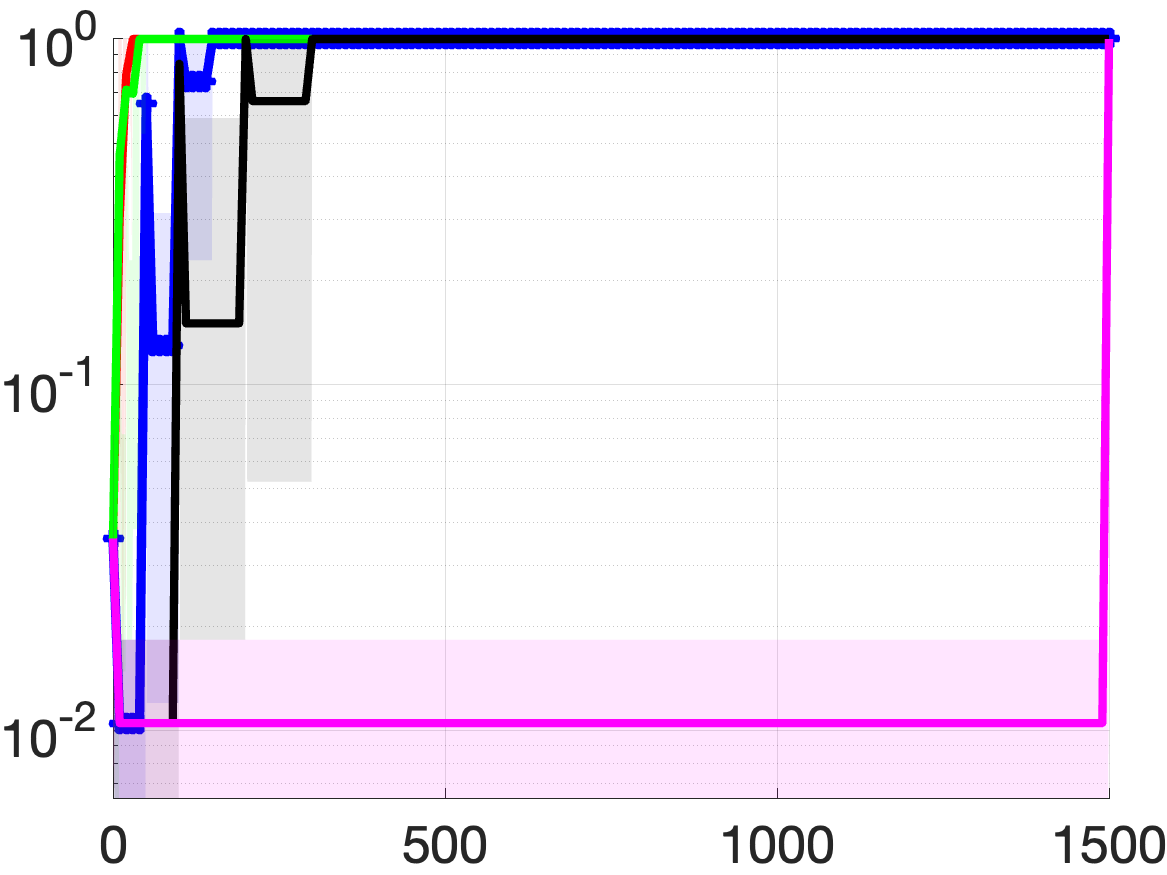}\\
   \vspace{1em}
   \includegraphics[width=0.8\textwidth]{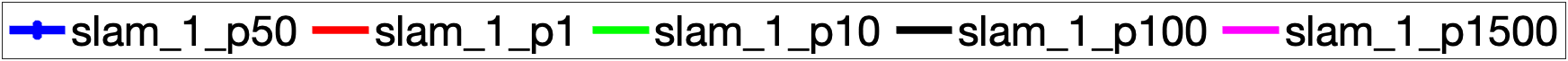}
\caption{Performance of SLAM with different period lengths $\{p_j\}$ on the \texttt{duke} problem.} \label{fig:duke_tunep}
\end{figure}
One finds that the smaller the period length, the faster the stepsize increases to $1$ and the better the performance of \texttt{SLAM} in this problem. However, smaller period lengths may lead to more zeroth-order oracle evaluations in the linesearch loop and thus higher per-iteration cost. Experiments show that a smaller period length such as $50$ generally works well in our tests. What  is apparent from the tests above is that for all periods, except $p = 1500$, the the performance in terms of function value and gradient norm is quite similar, implying that the scheme is relatively robust to changes in $p$.

\section{Conclusion} \label{sec:6}
The success of nonlinear programming solvers such as \texttt{snopt, knitro,} and \texttt{ipopt} hinges, in part, on the ability of such schemes to adapt to problem instances  without necessitating the need for providing problem parameters (such as Lipschitz constants of gradient maps, strong convexity or PL constants, etc.). In comparison, rigorous implementation of extant stochastic gradient-type schemes that adhere to theoretical guidelines, necessitates the knowledge of a host of problem-specific parameters. This often requires a computationally expensive tuning phase, where steplength choices are determined a priori, severely limiting the effectiveness and applicability of such techniques. Moreover, the resulting schemes employ either constant or non-increasing steplength sequences and often require stringent requirements which tend to rarely hold for general stochastic optimization problems. This motivates the need for developing parameter-free stochastic gradient schemes under relatively mild conditions and represents the prime focus of this work. In this paper, we propose a novel stochastic linesearch algorithm ({\bf SLAM}) for solving composite stochastic optimization problems, comprised of a smooth expected-value and potentially nonconvex function and a simple nonsmooth convex function. Employing an Armijo linesearch on the sampled evaluations, our method adjusts the stepsize based on the local landscape of the objective function and incorporates a periodic reset of the initial steplength. We established convergence rates and complexity guarantees under standard smoothness assumptions and these claims are further specialized to convex and PL regimes. We emphasize that our algorithm parameters are free of any problem-dependent constants. In addition, we do not impose stringent bounds on the initial steplength of the form of $\mathcal{O}(1/L)$, do not require non-increasing steplengths sequences, and do not need to appeal to highly limiting requirements such as the strong growth or the interpolation condition. Our preliminary numerics conducted on a breadth of problems, suggest that the scheme adapts well to a broad class of problem types and instances and competes well with de facto schemes such as \texttt{SGD} and \texttt{Adam}. Future work will focus on extending such a focus to other globalization strategies such as trust-regions and widening the reach of such adaptive methods.
\medskip

\noindent{\bf Acknowledgements} The research of Qi Wang and Uday Shanbhag are supported by AFOSR Grant FA9550-24-1-0259 and DOE Grant DE-SC0023303. The research of Yue Xie is funded by Hong Kong RGC General Research Fund (Project number:  17300824),  and Guangdong Province Fundamental and Applied Fundamental Research Regional Joint Fund (Project number: 2022B1515130009).

\bibliographystyle{informs2014}
\bibliography{ref}
\newpage
\begin{appendix}
\section{Proofs}\label{app: prfs0}
Proof of Lemma~\ref{lem.Nk sum}.
\begin{proof}
    (i). One has $\sum_{k=0}^{K-1} \frac{1}{N_k} \le \frac{K}{c_b K} = \frac{1}{c_b}$; (ii) We observe that $$\sum_{k=0}^{K-1} \frac{1}{N_k} =  \sum_{k=1}^K \frac{1}{k} \le 1 + \sum_{k=2}^K \int_{k-1}^k \frac{1}{u} du = 1 + \int_1^K \frac{1}{u}du = \ln(K) + 1.$$ 
    (iii) Under this setting, 
    \begin{align*}
        \sum_{k=0}^{K-1} \frac{1}{N_k} = \sum_{k=1}^K k^{-b} \le 1 + \int_{1}^{K} x^{-b} dx = \frac{K^{1-b} - b}{1-b} \le \frac{b}{b-1}.
    \end{align*}
\end{proof}

\noindent Proof of Lemma~\ref{lem.pj sum}.
\begin{proof}
\ref{item.pj1} We have 
\begin{align*}
    J(K) + 1 = \left\lceil \frac{K}{\lceil c_p K \rceil} \right\rceil \le \frac{K}{\lceil c_p K \rceil} + 1 \le \frac{K}{c_p K} + 1 = \frac{1}{c_p} + 1.
\end{align*}
\ref{item.pj2} We have that $J(K)$ is the smallest positive integer $l$ such that $\sum_{j=0}^l 2^j l_0 \ge K$, i.e., 
\begin{align*}
    && l_0(2^{l+1}-1) &\ge K \\
    \iff && l+1 &\ge \log_2\left(\, \frac{l_0 + K}{l_0}\, \right) \\
    \implies && J(K) + 1 & = \left\lceil \log_2\left(\frac{l_0 + K}{l_0}\right) \right\rceil.
\end{align*}
\ref{item.pj3} In this setting, $J(K)$ is the smallest positive integer $l$ such that $\sum_{j=0}^l (l_0 + j) \ge K$, i.e., 
\begin{align*}
    && (l+1)l_0 + \frac{l(l+1)}{2} &\ge K \\
    \iff && \tfrac{1}{2} l^2 + \left(\tfrac{1}{2} + l_0\right)l - (K-l_0) &\ge 0 \\
    \implies && l &\ge \sqrt{(\tfrac{1}{2} + l_0)^2 + 2(K-l_0)} - (\tfrac{1}{2} + l_0)\\
    \implies && J(K)+1 & = \left\lceil \sqrt{(\tfrac{1}{2} + l_0)^2 + 2(K-l_0)} - (\tfrac{1}{2} + l_0) + 1\right\rceil.
\end{align*}
\end{proof}

\section{Additional discussions on the two-stage problem in Section~\ref{sec:5.5}}\label{app: 2st}

 We discuss the uniqueness of the dual solution as well as whether Assumption~\ref{ass: F} and \ref{ass: var} hold for the economic dispatch problem. In general, if the dual solution is unique, the stochastic gradient is given by
    \begin{align*}
        \nabla H(x) + \nabla F(x, \bxi) = \nabla H(x) + T(\bxi)^\top \pi(x, \bxi).
    \end{align*}
   To show the Lipschitz continuity of the stochastic gradient in $x$ for a given $\xi$, it suffices to show the corresponding Lipschitz continuity of the dual solution $\pi(\bullet, \bxi)$. For a given $x$ and a given realization $\xi$, the second stage problem is 
    \begin{align}\label{SP-2nd}\tag{SP$_2(x,\xi)$}
    \begin{aligned}
        \min_{y \in \R^m} \ & \frac{1}{2}y^\top Q(\xi) y \\
        \text{s.t. } & W(\xi) y = h(\xi) - T(\xi) x, \\
        & y \ge 0.
        \end{aligned}
    \end{align}
For a given $x$ and $\xi$, we denote the primal and dual solutions as $y^*$ and $(\pi^*, s^*)$, respectively where we suppress the dependence on $x$ and $\xi$. These solutions satisfy the KKT conditions
    \begin{align}\label{KKT}\tag{KKT$_2(x,\xi)$}
    \begin{aligned}
        Q(\xi) y^* + W(\xi)^\top \pi^* - s^* &= 0, \\
        W(\xi) y^* &= h(\xi) - T(\xi) x, \\
        (y^*)^\top s^* &= 0, \\
        y^* &\ge 0, \\
        s^* &\ge 0.
        \end{aligned}
    \end{align}
By simplification of the KKT system and checking non-singularity, we may claim the uniqueness of the dual solution via the following Lemma, the proof of which is omitted.
\begin{lemma}\label{lm: kkt}\em
    Consider the second-stage problem \eqref{SP-2nd}, given a feasible $x$ and a realization $\xi$. 
    Suppose the LICQ holds at the unique primal solution $y^*(x,\xi)$ of \eqref{SP-2nd}. Then there exists a unique solution $(y^*(x,\xi),\pi^*(x,\xi)),s^*(x,\xi))$ to the KKT conditions (where $\xi$ and $x$ are suppressed), where $(\pi^*(x,\xi),s^*(x,\xi))$ denotes the unique solution of the dual problem. In particular, we have 
    \begin{align}\label{kktsol}
        \begin{pmatrix}
            y^*(x,\xi) \\ \pi^*(x,\xi) \\ s^*(x,\xi) 
        \end{pmatrix}
        = \begin{pmatrix}
             Q(\xi) & W(\xi)^\top & -I \\ W(\xi) & \mathbf{0} & \mathbf{0} \\ E_{\mathcal{A}}(x,\xi) & \mathbf{0} & \mathbf{0} \\ \mathbf{0} & \mathbf{0} & E_{\mathcal{A}^c(x,\xi)}
         \end{pmatrix}^{-1}
        \begin{pmatrix}
            \mathbf{0}_m \\ h(\xi) - T(\xi) x \\ \mathbf{0}_m 
        \end{pmatrix},
    \end{align}
    where $\mathcal{A}(x,\xi) \triangleq \left\{ \, i \, \mid \, y^*_i(x,\bxi) = 0 \, \right\}$ (simplified as $\mathcal{A}$), and
    \begin{align*}
        E_{\mathcal{A}} = [\mathbf{e}_{i_1},\cdots, \mathbf{e}_{i_{|\mathcal{A}|}}]^\top,
    \end{align*}
    with $i_k \in \mathcal{A}$ for all $k$ and for any $i$, $\mathbf{e}_i$ is given by $[0, \cdots, 0,1,0,\cdots,0]$ with $1$ in the $i$th entry.
\end{lemma}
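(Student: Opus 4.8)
The plan is to split the argument into three stages: (i) identify the primal solution and its active set; (ii) show that, once the active set is fixed by complementarity, every KKT triple solves the square linear system whose coefficient matrix appears in \eqref{kktsol}; and (iii) prove that this matrix is nonsingular, which simultaneously delivers uniqueness of the dual solution and the closed form \eqref{kktsol}.

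First I would observe that, since $Q(\xi) \succ 0$ and $\mathcal{Y}(x,\xi)$ is a nonempty polyhedron, \eqref{SP-2nd} is a strictly convex quadratic program and hence has a unique minimizer $y^*(x,\xi)$; in particular the active set $\mathcal{A}(x,\xi) = \{\, i \mid y^*_i(x,\xi) = 0 \,\}$ is well defined, independently of any multiplier choice. Because LICQ holds at $y^*$, it is a constraint qualification for this convex program, so the KKT system \eqref{KKT} admits at least one solution $(y^*,\pi^*,s^*)$, and any such solution satisfies $s^*_i = 0$ for $i \notin \mathcal{A}$ (complementarity with $y^*_i > 0$) and $y^*_i = 0$ for $i \in \mathcal{A}$. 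Substituting these equalities for the sign constraints, every KKT triple therefore solves the linear system appearing in \eqref{kktsol}, whose block rows encode, respectively, stationarity $Q(\xi) y + W(\xi)^\top \pi - s = 0$, primal feasibility $W(\xi) y = h(\xi) - T(\xi) x$, the active bounds $E_{\mathcal{A}} y = \mathbf{0}$, and the vanishing of the inactive dual components $E_{\mathcal{A}^c} s = \mathbf{0}$; a dimension count shows this system is square.

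The crux is to verify that the block matrix $M$ in \eqref{kktsol} is nonsingular. I would argue by contradiction: suppose $M\,(u^\top,\,v^\top,\,w^\top)^\top = 0$. The third and fourth block rows give $u_i = 0$ for $i \in \mathcal{A}$ and $w_i = 0$ for $i \notin \mathcal{A}$, whence $u^\top w = 0$; the second block row gives $W(\xi) u = 0$; pairing the first block row $Q(\xi) u + W(\xi)^\top v - w = 0$ with $u$ and using $W(\xi) u = 0$ and $u^\top w = 0$ yields $u^\top Q(\xi) u = 0$, so $u = 0$ by positive definiteness of $Q(\xi)$. Then $W(\xi)^\top v = w$ with $w$ supported on $\mathcal{A}$, i.e. $W(\xi)^\top v - \sum_{i\in\mathcal{A}} w_i \mathbf{e}_i = 0$, which is precisely a linear dependence among the active-constraint gradients at $y^*$ — the rows of $W(\xi)$ and the vectors $\mathbf{e}_i$, $i \in \mathcal{A}$ — and LICQ forces all coefficients to vanish, so $v = 0$ and $w = 0$. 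Hence $M$ is nonsingular, the linear system has the unique solution \eqref{kktsol}, and since every KKT triple solves it, the triple $(y^*(x,\xi),\pi^*(x,\xi),s^*(x,\xi))$ is unique and given by \eqref{kktsol}; in particular the dual solution $(\pi^*(x,\xi),s^*(x,\xi))$ is unique. I expect the nonsingularity step, specifically the translation of ``$W(\xi)^\top v = w$ with $w$ supported on $\mathcal{A}$'' into a LICQ violation, to be the main point needing care; the remainder is bookkeeping with the KKT conditions.
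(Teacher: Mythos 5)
Your proof is correct and follows exactly the route the paper indicates but leaves out: it reduces the KKT system to the square linear system in \eqref{kktsol} by fixing the active set $\mathcal{A}$ of the unique primal minimizer and using complementarity to force $s^*_i=0$ for $i\notin\mathcal{A}$, and then verifies nonsingularity of the block matrix via positive definiteness of $Q(\xi)$ (to kill the primal component) and LICQ (to kill the multiplier components, since $W(\xi)^\top v=\sum_{i\in\mathcal{A}}w_i\mathbf{e}_i$ is precisely a dependence among the active-constraint gradients). The paper explicitly omits this proof, describing it only as ``simplification of the KKT system and checking non-singularity,'' and your argument is a correct and complete instantiation of that plan.
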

Note that we will routinely suppress the dependence on $x$ and $\xi$, when referring to the primal-dual triple $(y^*,\pi^*,s^*)$. By Lemma~\ref{lm: kkt}, $\nabla F(x,\bxi) = T^\top \pi(x,\bxi)$. Next we argue that $(y^*(x,\xi),\pi^*(x,\xi),s^*(x,\xi))$ is locally affine  in $x$ for a given $\xi$, where $(y^*(x.\xi),\pi^*(x,\xi),s^*(x,\xi))$ denotes the solution of \eqref{SP-2nd} for a given $x$ and some $\xi$.
\begin{lemma}\label{lm: la}
    Consider the two-stage problem \eqref{SP-2nd} in Section~\eqref{sec:5.5} for some feasible $\hat{x}$ and realization $\xi$. Suppose the LICQ holds for any $x$ and $\xi$.  Then there exists $\epsilon > 0$ such that if $\bar x$ is feasible and $\| \bar x - \hat x \| \le \epsilon$, then $(y^*,\pi^*,s^*)$ is affine in $x$ on the line segment connecting $\hat x$ and $\bar x$.
\end{lemma}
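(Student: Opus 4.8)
The plan is to combine the closed-form representation of the primal--dual triple from Lemma~\ref{lm: kkt} with a local-constancy argument for the active set. By Lemma~\ref{lm: kkt}, for the fixed realization $\xi$ and any feasible $x$,
\[
\begin{pmatrix} y^*(x,\xi) \\ \pi^*(x,\xi) \\ s^*(x,\xi) \end{pmatrix} = M\big(\mathcal{A}(x,\xi)\big)^{-1} \begin{pmatrix} \mathbf{0}_m \\ h(\xi)-T(\xi)\,x \\ \mathbf{0} \end{pmatrix},
\]
where $M(\mathcal{A})$ denotes the coefficient matrix appearing in \eqref{kktsol}; crucially, $M(\mathcal{A})$ depends on $x$ and $\xi$ only through the active set $\mathcal{A}(x,\xi)=\{\,i:y^*_i(x,\xi)=0\,\}$, and it is invertible under LICQ. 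Since the right-hand side is affine in $x$, the map $x\mapsto(y^*,\pi^*,s^*)(x,\xi)$ is affine on any set over which $\mathcal{A}(x,\xi)$ is constant. Hence it suffices to exhibit $\epsilon>0$ such that $\mathcal{A}(x,\xi)=\mathcal{A}(\hat x,\xi)$ for every feasible $x$ with $\|x-\hat x\|\le\epsilon$; since both the feasible set $\mathcal X$ and the ball $\overline{B}(\hat x,\epsilon)$ are convex, the segment joining $\hat x$ to any feasible $\bar x$ with $\|\bar x-\hat x\|\le\epsilon$ then lies entirely in that region.

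To produce such an $\epsilon$, I would first record that the solution map $x\mapsto y^*(x,\xi)$ is continuous at $\hat x$: the second-stage problem \eqref{SP-2nd} is a strongly convex QP since $Q(\xi)\succ0$, its feasible set is nonempty for every feasible $x$ by assumption, and classical parametric-QP stability (Lipschitz dependence of the minimizer of a strongly convex quadratic on the right-hand side of its equality constraints) yields continuity in $x$; continuity of $\pi^*(\cdot,\xi)$ and $s^*(\cdot,\xi)$ then follows from the KKT relation $s^*=Q(\xi)y^*+W(\xi)^\top\pi^*$ with LICQ-uniqueness of $\pi^*$, or directly from the displayed representation. Consequently, for each index $i$ with $y^*_i(\hat x,\xi)>0$ there is a radius $\epsilon_i>0$ with $y^*_i(x,\xi)>0$, hence $i\notin\mathcal{A}(x,\xi)$, whenever $x$ is feasible and $\|x-\hat x\|\le\epsilon_i$; and for each $i\in\mathcal{A}(\hat x,\xi)$ we invoke strict complementarity $s^*_i(\hat x,\xi)>0$ --- which we take to be part of the non-degeneracy built into the economic-dispatch instance (the uniqueness of the primal solution and of the active set noted in Section~\ref{sec:5.5}, together with LICQ) --- to obtain a radius $\epsilon_i>0$ with $s^*_i(x,\xi)>0$, hence $y^*_i(x,\xi)=0$ and $i\in\mathcal{A}(x,\xi)$, for feasible $x$ near $\hat x$. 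Taking $\epsilon:=\min_{1\le i\le m}\epsilon_i$ gives $\mathcal{A}(x,\xi)=\mathcal{A}(\hat x,\xi)$ for all feasible $x$ with $\|x-\hat x\|\le\epsilon$.

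With this $\epsilon$ fixed, for any feasible $\bar x$ with $\|\bar x-\hat x\|\le\epsilon$ the active set equals the constant $\hat{\mathcal A}:=\mathcal{A}(\hat x,\xi)$ at every point of the segment $\{(1-\theta)\hat x+\theta\bar x:\theta\in[0,1]\}$, so on that segment the triple is $M(\hat{\mathcal A})^{-1}\,(\mathbf{0}_m,\,h(\xi)-T(\xi)x,\,\mathbf{0})$, which is affine in $x$; this is the assertion. I expect the local-constancy step to be the main obstacle: at a degenerate $\hat x$ lying on the common boundary of two cells carrying distinct active sets, no neighborhood would work, so the argument genuinely depends on strict complementarity (equivalently, on the uniqueness-of-active-set hypothesis) to rule such configurations out; the parametric-QP continuity statement, though standard, is the other ingredient that needs to be cited with care.
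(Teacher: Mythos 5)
There is a genuine gap: your argument hinges on the active set $\mathcal{A}(x,\xi)$ being \emph{locally constant} near $\hat x$, and to get that you invoke strict complementarity at $\hat x$ (i.e., $s_i^*(\hat x,\xi)>0$ for every $i\in\mathcal{A}(\hat x,\xi)$), which you "take to be part of the non-degeneracy built into the economic-dispatch instance." But strict complementarity is neither a hypothesis of the lemma nor a consequence of what the paper actually assumes (LICQ, uniqueness of the primal solution and of the active set). Uniqueness of the primal--dual pair does not preclude an index with $y_i^*(\hat x,\xi)=s_i^*(\hat x,\xi)=0$; the one-dimensional example $\min \tfrac12 y^2$ s.t.\ $y\ge 0$ already has a unique KKT triple with $y^*=s^*=0$. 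At such a degenerate $\hat x$ the active set is genuinely \emph{not} locally constant --- your own closing caveat concedes exactly this --- yet the lemma's conclusion is still true, so a proof that only works under strict complementarity does not establish the statement.

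The paper's argument avoids this by not insisting that the active set be constant. It shows that for $\bar x$ close enough to $\hat x$, each index $i$ falls into one of three cases: (i) $\hat y_i=0,\ \bar y_i=0$; (ii) $\hat y_i>0,\ \bar y_i>0$; or (iii) $\hat y_i=0,\ \bar y_i>0,\ \hat s_i=0,\ \bar s_i=0$. Case (iii) is precisely the strict-complementarity failure your proof excludes, and it is handled by observing that the straight-line interpolation $(y_\theta,\pi_\theta,s_\theta)=\theta(\bar y,\bar\pi,\bar s)+(1-\theta)(\hat y,\hat\pi,\hat s)$ still satisfies complementarity (the newly positive component of $y$ is paired with a component of $s$ that vanishes at \emph{both} endpoints), while stationarity and primal feasibility are linear and hence preserved; uniqueness of the KKT solution at $x_\theta$ then forces the solution map to coincide with this affine interpolant. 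The only configurations that must be excluded are $\hat y_i>0,\ \bar y_i=0$ and $\hat y_i=0,\ \bar y_i>0$ with $\hat s_i>0$, and these are ruled out by a compactness/subsequence argument using the finitely many possible active sets in \eqref{kktsol} and uniqueness of the limit KKT point --- no strict complementarity needed. Your continuity step via parametric-QP stability is a reasonable substitute for that subsequence argument, but to complete the proof you would need to replace "the active set is locally constant" with the weaker trichotomy above and then verify directly that the interpolated triple solves the KKT system.
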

\noindent {\bf Proof sketch.}
    Denote $(\bar y, \bar \pi, \bar s)$ and $(\hat y,\hat \pi, \hat s)$ as the solutions to the KKT system for $\bar x$ and $\hat x$ respectively. We claim that there exists an $\epsilon>0$ such that whenever $\| \bar x - \hat x \| \, \le \, \epsilon$, then for any $i \in \{1,\cdots, m\}$, one of the following three cases holds: (i) $\hat y_i = 0$, $\bar y_i = 0$ ;(ii) $\hat y_i > 0$, $\bar y_i > 0$; (iii) $\hat y_i = 0$, $\bar y_i > 0$, $\hat s_i = 0$, $\bar s_i = 0$. These cases are the basis for the KKT solution curve to be affine in $x$ for a given $\xi$. They ensure that complementarity is maintained on the line segment $\theta (\bar y, \bar \pi, \bar s) + (1-\theta) (\hat y,\hat \pi, \hat s)$, $\theta \in (0,1)$. Indeed, if this claim is true then it can be checked that $(y_\theta, \pi_\theta, s_\theta) = \theta (\bar y, \bar \pi, \bar s) + (1-\theta) (\hat y,\hat \pi, \hat s)$ is the unique solution of the KKT system associated with $x_\theta = \theta \bar x + (1-\theta) \hat x$, $\theta \in (0,1)$, i.e, $(y^*,\pi^*,s^*)$ is affine in $x$ on $[\hat x, \bar x]$. Now we prove this claim. We proceed by contradiction and assume that the claim is false.  Then there exists a sequence $\{ x^k \}$, where $x^k \to \hat x$ such that its associated KKT solution $(y^k,\pi^k,s^k)$ satisfies either (I) $\hat y_i > 0, y^k_i = 0$ for some $i$ or (II) $\hat y_i = 0, y^k_i > 0$, $\hat s_i > 0$, $s^k_i = 0$ for some $i$. First we suppose that case (I) happens infinitely often. Since $i \in \{1,\cdots, m \}$ has finite cardinality, there exists $i_0$ such that (I) happens infinitely often with $i = i_0$. We take the subsequence $k \in \mathcal{K}_1$ such that (I) is true for every $k \in \mathcal{K}_1$ and $i = i_0$. Note that by \eqref{kktsol}, the sequence $\{(y^k,\pi^k,s^k)\}$ is bounded since $\{ x^k \}$ is a bounded sequence and $\mathcal{A}$ can assume a finite number of values. 
    Therefore, we have a convergent subsequence $k \in \mathcal{K}_2 \subseteq \mathcal{K}_1$ such that $(y^k,\pi^k,s^k) \to (Y,\Pi,S)$, $k \in \mathcal{K}_2$. By continuity, $(Y,\Pi,S)$ is a KKT solution associated with $\hat x$. Therefore, $(Y,\Pi,S) = (\hat y, \hat \pi, \hat s)$ by uniqueness of the KKT solution. However, this implies that $\hat y_{i_0} = \lim_{k \in \mathcal{K}_2} y^k_{i_0} = 0$, contradicting the fact that $\hat y_{i_0} > 0$. Analogously, we can show that if (II) happens infinitely often, a contradiction also emerges. The claim is proved. $\hfill \Box$

\begin{theorem}\label{thm: lip-2st}
    Consider the economic dispatch problem in Section~\ref{sec:5.5}. Suppose LICQ holds for any $x$ and any realization $\xi$. There exists a uniform Lipschitz constant $L$ such that for any $\xi$, $\nabla F(x,\xi)$ is $L$-Lipschitz continuous.
\end{theorem}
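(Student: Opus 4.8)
The plan is to show that, for each fixed realization $\xi$, the map $x \mapsto \nabla F(x,\xi) = T^\top \pi(x,\xi)$ is continuous and piecewise affine on $\mathcal{X}$, with affine slopes controlled by a constant that does \emph{not} depend on $\xi$; since $\mathcal{X}$ is convex, a continuous map that is locally Lipschitz with a uniform constant is globally Lipschitz with that constant. The two ingredients are the explicit KKT representation of the dual solution in Lemma~\ref{lm: kkt} and the local affineness of the primal--dual curve in Lemma~\ref{lm: la}.

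First I would record the structural fact specific to this instance: the matrices $Q$, $W$, and $T$ do not depend on $\xi$ (only $h(\xi)$ does, through $D(\xi)$), so for every active set $\mathcal{A}\subseteq\{1,\dots,m\}$ the KKT matrix $M_{\mathcal{A}}$ of Lemma~\ref{lm: kkt} is a fixed matrix. For every $x\in\mathcal{X}$ and every $\xi$, LICQ holds at the unique primal solution $y^*(x,\xi)$, so Lemma~\ref{lm: kkt} gives that $M_{\mathcal{A}(x,\xi)}$ is nonsingular and $\pi(x,\xi) = R_{\mathcal{A}(x,\xi)}\,(h(\xi)-Tx)$, where $R_{\mathcal{A}}$ is the submatrix of $M_{\mathcal{A}}^{-1}$ extracting the $\pi$-rows and the columns multiplying $h(\xi)-Tx$. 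Hence
\[
\nabla F(x,\xi) = T^\top\pi(x,\xi) = -\,T^\top R_{\mathcal{A}(x,\xi)}T\,x + T^\top R_{\mathcal{A}(x,\xi)}h(\xi),
\]
which is affine in $x$ with linear part of operator norm at most $\|T\|^2\|M_{\mathcal{A}(x,\xi)}^{-1}\|$. Since $\mathcal{A}(x,\xi)$ ranges over the finite family of subsets of $\{1,\dots,m\}$ and each corresponding $M_{\mathcal{A}}$ is a fixed nonsingular matrix, the quantity
\[
L \,\triangleq\, \|T\|^2 \max\bigl\{\|M_{\mathcal{A}}^{-1}\|\ :\ \mathcal{A}=\mathcal{A}(x,\xi)\ \text{for some}\ x\in\mathcal{X},\ \xi\bigr\}
\]
is finite and independent of $\xi$.

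Next I would pass from this pointwise affine description to a Lipschitz bound along segments. Fix $\xi$ and feasible points $\hat x,\bar x\in\mathcal{X}$; by convexity of $\mathcal{X}$ the segment $[\hat x,\bar x]$ lies in $\mathcal{X}$. By Lemma~\ref{lm: la}, each point $z$ of the segment has a relative neighborhood on which $\pi(\cdot,\xi)$ agrees with one of the affine maps $w\mapsto R_{\mathcal{A}}(h(\xi)-Tw)$; by compactness of the segment and finiteness of the active-set family, $[\hat x,\bar x]$ decomposes into finitely many consecutive closed sub-segments $[\hat x=x^{(0)},x^{(1)}],\dots,[x^{(r)},x^{(r+1)}=\bar x]$, on each of which $\nabla F(\cdot,\xi)$ is affine with linear part of norm at most $L$. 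Since the dual solution is single-valued (hence $\nabla F(\cdot,\xi)$ is continuous and the consecutive affine pieces agree at the breakpoints), and the breakpoints are colinear so that $\sum_j\|x^{(j)}-x^{(j+1)}\|=\|\hat x-\bar x\|$, telescoping the triangle inequality gives
\[
\bigl\|\nabla F(\hat x,\xi)-\nabla F(\bar x,\xi)\bigr\|\ \le\ \sum_{j=0}^{r}\bigl\|\nabla F(x^{(j)},\xi)-\nabla F(x^{(j+1)},\xi)\bigr\|\ \le\ L\sum_{j=0}^{r}\bigl\|x^{(j)}-x^{(j+1)}\bigr\|\ =\ L\,\|\hat x-\bar x\|.
\]
As $L$ does not depend on $\xi$ or on the chosen points, $\nabla F(\cdot,\xi)$ is $L$-Lipschitz on $\mathcal{X}$ for every $\xi$, as claimed.

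The step I expect to be the main obstacle is this last one: converting the \emph{local} affineness of Lemma~\ref{lm: la} into a genuinely \emph{finite} piecewise-affine partition of the segment and checking that the single-valued selection is obtained by gluing pieces that match at the breakpoints (so that no spurious jumps are introduced). This requires a careful compactness argument together with the finiteness of the family of active sets; once that is in place, the slope bound is immediate from Lemma~\ref{lm: kkt} and the $\xi$-independence of $Q$, $W$, $T$. (In the general formulation where $Q(\xi),W(\xi),T(\xi)$ vary with $\xi$, one would additionally need compactness of $\Xi$, continuity of $\xi\mapsto M_{\mathcal{A}}(\xi)^{-1}$ where defined, and a uniform lower bound on the smallest singular values of the arising matrices $M_{\mathcal{A}}(\xi)$; none of this is needed here.)
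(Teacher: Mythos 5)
Your proposal is correct and follows essentially the same route as the paper's proof sketch: both rest on the explicit KKT representation of Lemma~\ref{lm: kkt}, the local affineness from Lemma~\ref{lm: la}, the $\xi$-independence of $Q$, $W$, $T$ together with the finiteness of the active-set family to obtain a uniform slope bound, and a finite-cover/concatenation argument along the segment to pass from local to global Lipschitz continuity. Your version is somewhat more explicit about the operator-norm bound $\|T\|^2\|M_{\mathcal{A}}^{-1}\|$ and about continuity at the breakpoints, but these are presentational refinements rather than a different argument.
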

\noindent{\bf Proof sketch.}
    Given a realization $\xi$, suppose we are given an $x^1$ and $x^2$ in the domain. Denote $x(t) = x^1 + t(x^2 - x^1)$, $0 \le t \le 1$. Suppose $(y(t),\pi(t),s(t))$ denotes the KKT solution associated with $x(t)$. For any $\hat t \in [0,1)$, by Lemma~\ref{lm: la}, we know that there exists $\bar t > \hat t$ such that if $\bar t$ is close enough to $\hat t$ then $(y^*,\pi^*,s^*)$ is affine on $[x(\hat t),x(\bar t)]$. Moreover, similar to the proof of Lemma~\ref{lm: la}, we have for any $i \in \{1,\cdots, m\}$, one of the following three cases holds: (i) $y_i(\hat  t) = 0$, $y_i(\bar t) = 0$; (ii) $y_i(\hat t) > 0$, $y_i(\bar t) > 0$; (iii) $y_i(\hat t) = 0$, $y_i(\bar t) > 0$, $s_i(\hat{t})=0$, $s_i(\bar{t}) = 0$. Therefore, on the interval $(\hat t, \bar t)$, the sign of $y_i(t)$ remains unchanged and the active set $\mathcal{A}$ remains stable. As a result, \eqref{kktsol} holds with a constant $\mathcal{A}$ on this small interval and there exists a uniform Lipschitz constant $\bar L$ independent of $x$ or $\xi$ since $Q$, $W$, $T$ are independent of $\xi$ for the economic dispatch problem and $\mathcal{A}$ can take on only a finite number of values.  If $\hat t \in (0,1]$ and $\hat t > \bar t$ with $\bar t$ close enough, we can prove a similar statement. As a result, for any $t \in [0,1]$, there is a small neighborhood of $t$ intersected with $[x^1,x^2]$ such that $(y^*,\pi^*,s^*)$ is $\bar L$-Lipschitz continuous. Due to finite cover property of compact set, we can take finite number of intervals that cover $[x^1,x^2]$ and on each interval $(y^*,\pi^*,s^*)$ is $\bar L$-Lipschitz. By concatenating these intervals we have $\| (y^*,\pi^*,s^*)(x^1) - (y^*,\pi^*,s^*)(x^2) \| \le \bar{L} \| x^1 - x^2 \|$ $\implies$ $\| \pi^*(x^1) - \pi^*(x^2) \| \le \bar{L} \| x^1 - x^2 \|$. Since $x^1$ and $x^2$ is arbitrary, we have $\pi(x,\xi)$ is $\bar L$-Lipschitz uniformly. Hence $\nabla F(x,\xi) = T^\top \pi(x,\xi)$ is $L$-Lipschitz uniformly in $x$ for a given $\xi$, where $L = \bar L \| T \|$. $\hfill \Box$

According to Theorem~\ref{thm: lip-2st}, Assumption~\ref{ass: F} holds for the economic dispatch problem. As for the Assumption~\ref{ass: var}, again from \eqref{kktsol} and the settings in the economic dispatch problem, the stochastic gradient $\nabla F(x,\xi) = T(\xi)^\top \pi^*(x,\xi)$ is bounded and thus Assumption~\ref{ass: var} holds.
\end{appendix}

\end{document}